\RequirePackage{fix-cm}
\documentclass[twocolumn]{svjour3}

\smartqed  

\usepackage{amsmath}
\usepackage{amssymb}
\usepackage{amsthm}
\usepackage{graphicx}
\usepackage{mathrsfs}
\usepackage{hyperref}
\usepackage{color}
\usepackage{multirow}
\usepackage{array}
\usepackage{booktabs}
\usepackage{enumitem}   
\usepackage{mathabx}
\usepackage[misc,geometry]{ifsym}

\DeclareMathOperator\Span{span}
\DeclareMathOperator\spect{Spect}

\theoremstyle{definition}

\begin{document}

\title{Model reduction for constrained mechanical systems via spectral submanifolds}

\titlerunning{Model reduction for constrained mechanical systems via SSM}

\author{Mingwu Li         \and
        Shobhit Jain      \and
        George Haller
}


\institute{M. Li (\Letter)\and S. Jain\and G. Haller \at
              Institute for Mechanical Systems, ETH Z\"{u}rich \\
              Leonhardstrasse 21, 8092 Zurich, Switzerland\\
              \email{mingwli@ethz.ch}           
}

\date{Received: date / Accepted: date}

\maketitle

\begin{abstract}
	Dynamical systems are often subject to algebraic constraints in conjunction to their governing ordinary differential equations. In particular, multibody systems are commonly subject to configuration constraints that define kinematic compatibility between the motion of different bodies. A full-scale numerical simulation of such constrained problems is challenging, making reduced-order models (ROMs) of paramount importance.
    In this work, we show how to use spectral submanifolds (SSMs) to construct rigorous ROMs for mechanical systems with configuration constraints. These SSM-based ROMs  enable the direct extraction of backbone curves and forced response curves, and facilitate efficient bifurcation analysis. We demonstrate the effectiveness of this SSM-based reduction procedure on several examples of varying complexity, including nonlinear finite-element models of multi-body systems. We also provide an open-source implementation of the proposed method that also contains all details of our numerical examples.
\end{abstract}

\keywords{Invariant manifolds \and Reduced-order models \and  Spectral submanifolds \and Configuration constraints \and Forced response curves}

\section{Introduction}
Constrained mechanical systems arise in a number of engineering applications. In multibody dynamics, for instance, different components of a multibody system are connected by joints that impose kinematic configuration constraints on the system~\cite{shabana2020dynamics}. In NEMS applications, the piezoelectric equations again feature algebraic contraints that couple electric potentials to mechanical displacements~\cite{lazarus2012finite}. In fluid mechanics, the preservation of mass in the incompressible, spatially-discretized Navier–Stokes equations is another example of an algebraic constraint~\cite{ascher1998computer}.  The sheer number of variables that arise from the spatial discretization of partial differential equations in such applications, e.g., the generalized displacements in flexible multibody systems, renders the full-scale simulation of such constrained dynamical systems infeasible. Model reduction enables efficient analysis of these high-dimensional systems. The goal of this paper is to present a mathematically rigorous and computationally efficient nonlinear model reduction method for constrained nonlinear systems based on the theory of spectral submanifolds.

%

\begin{sloppypar}
For model reduction of flexible multibody systems, the floating frame of reference formulation is commonly adopted, wherein the motion of a deformable body is decomposed into rigid-body motion of a floating frame and an elastic deformation relative to the floating frame~\cite{shabana2020dynamics}. Model reduction is then performed on the elastic deformation component by projecting it onto a linear subspace. Depending on the choice of that subspace, various projection-based reduction techniques have been developed such as linear normal mode (LNM) approaches~\cite{agrawal1985dynamic} and component mode synthesis (CMS) methods~\cite{cammarata2020use}. In both the LNM and CMS-based projection methods, each flexbile body has its own local reduction subspace. In systems comprising many bodies, this may result in an excessive number of variables in the reduced-order model (ROM). To overcome this issue, projection-based reduction methods based on global flexible modes have been proposed recently~\cite{cammarata2020global,cammarata2021global}.
\end{sloppypar}

In control systems applications, other non-modal reduction techniques~\cite{lehner2006use,fehr2011simulation,saak2018model} have also been employed to generate the projection subspace. For linear control systems, transfer functions characterize the relations between the control inputs and outputs in frequency domain. A common goal of reduction is then to find an appropriate projection such that the transfer function of the ROM approximates the full transfer function (see~\cite{benner2017model} for a survey).

For flexible bodies undergoing large elastic deformations, geometric nonlinearities need to be considered~\cite{shabana2020dynamics,grazioso2019geometrically}. 
Absolute nodal coordinate formulation (ANCF) is useful when the system performs both overall motions and large deformations~\cite{shabana2020dynamics}. In the ANCF, all motions are expressed in the same inertial frame and hence there is no separation between reference frame motion and elastic deformation. CMS-based reduction methods have been developed in the ANCF as well~\cite{tang2019model,tian2020model}. Here, the equations of motion in ANCF are locally linearized with respect to a set of quasi-static loading configurations and the corresponding local modes are used for model reduction.

Projection subspaces have also been obtained in data-driven settings. Specifically, the proper orthogonal decomposition (POD) has been used to perform model reduction for flexible multibody systems in ANCF~\cite{luo2017model}. As an extension of POD, a proper symplectic decomposition~\cite{peng2022data} (PSD) has been proposed recently to construct the modal bases while preserving the symplectic structure of the equations. The PSD has shown better numerical accuracy and higher computational efficiency than the POD~\cite{peng2022data}.

All these reduction methods are fundamentally linear because they project governing equations from the full phase space or configuration space onto a linear subspace. Such linear techniques generally fail to reproduce geometrically nonlinear response involving larger displacements. Indeed, no linear subspace is invariant in a generic nonlinear system and hence linear projection can only be guaranteed to work in a small neighbourhood of the equilibrium~\cite{haller2017exact}. More accurate approaches should therefore be based on ROMs on attracting, low-dimensional invariant manifolds in the full phase space of the system. 

\begin{sloppypar}
In structural dynamics, invariant-manifold-based model reduction has been explored for a few decades now~(\cite{shaw1993normal,jiang2005nonlinear,touze2006nonlinear,haller2016nonlinear,SHOBHIT,touze2021model}). Nonlinear normal modes (NNMs) \cite{shaw1993normal} were first sought as invariant manifolds tangent to linear modal subspaces at the origin, serving as nonlinear continuations of linear normal modes. When they exist, such NNMs offer a way to mathematically rigorous model reduction for nonlinear systems. The recent theory of spectral submanifolds (SSMs)~\cite{haller2016nonlinear} has indeed clarified the spectral conditions under which NNMs exist uniquely as SSMs, which are the unique smoothest invariant manifolds among all possible NNMs. Importantly, slow SSMs attract nearby full system trajectories and their internal dynamics serve as a mathematically exact ROM for the full nonlinear system. Furthermore, SSM-based ROMs enable a direct extraction of backbone curves~\cite{breunung2018explicit}, forced response curves~\cite{ponsioen2020model,part-i} and bifurcation analysis~\cite{part-ii}. A recent development is the computation of SSMs and their reduced dynamics in physical coordinates using only a minimal number of eigenvectors, which makes SSM computations scalable to realistic finite element models in structural dynamics~\cite{SHOBHIT}. An open-source implementation of the computational procedure has been available in the MATLAB-based \textsc{ssmtool} package~\cite{ssmtool21}.

All these SSM computations have targeted mechanical systems defined by systems of ordinary differential equations (ODEs). In mechanical systems with configuration constraints, however, the constraints satisfy additional algebraic equations, resulting in governing equations that are systems of differential-algebraic equations (DAEs). While these DAEs can be formulated as an equivalent system of ODEs via an appropriate choice of generalized coordinates or the elimination of the associated Lagrange multipliers~\cite{laulusa2008review,bauchau2008review}, we focus here on the DAE formulation due to its compactness and simplicity. In this work, we leverage SSM theory to reduce mechanical systems with configuration constraints. Specifically, we generalize  SSM computations and SSM-based model reduction to constrained nonlinear systems described by DAEs, thereby enabling rigorous and efficient nonlinear analysis of high-dimensional constrained mechanical systems. We also provide an open-source numerical implementation of this approach which constitutes an extension of \textsc{ssmtool}~\cite{ssmtool21}, a MATLAB-based package for the calculation of SSMs for differential equations.
\end{sloppypar}

The rest of this paper is organized as follows. Section~\ref{sec:system-setup} details the setup of
mechanical systems with configuration constraints. In section~\ref{sec:ssm-theory}, we review SSM theory for ODEs and extend it to DAEs. In Section~\ref{sec:ssmtool}, we briefly review \textsc{ssmtool}, which is used for the computation of SSMs for constrained mechanical systems in this work. Furthermore, we discuss the computational treatment for non-polynomial nonlinearities, which frequently arise in multibody systems. Finally, in Section~\ref{sec:examples}, we present several examples to illustrate the power of the SSM-based ROMs before drawing conclusions in Section~\ref{sec:conclusion}.

\section{System setup}
\label{sec:system-setup}
We consider a periodically forced nonlinear mechanical system with configuration constraints
\begin{align}
&\boldsymbol{M}\ddot{\boldsymbol{x}} +\boldsymbol{C}\dot{\boldsymbol{x}}+\boldsymbol{K}\boldsymbol{x}+\boldsymbol{f}(\boldsymbol{x},\dot{\boldsymbol{x}})+[\boldsymbol{G}(\boldsymbol{x})]^\mathrm{T}\boldsymbol{\mu}\nonumber\\
& =\epsilon \boldsymbol{f}^{\mathrm{ext}}(\boldsymbol{x},\dot{\boldsymbol{x}},\Omega t),\quad \boldsymbol{g}(\boldsymbol{x})=\boldsymbol{0},\quad 0\leq\epsilon\ll1,\label{eq:eom-second-dae}
\end{align}
where $\boldsymbol{x}\in\mathbb{R}^n$ is a displacement vector; $\boldsymbol{M}, \boldsymbol{C},\boldsymbol{K}\in\mathbb{R}^{n\times n}$ are mass, damping and stiffness matrices; $\boldsymbol{f}(\boldsymbol{x},\dot{\boldsymbol{x}})$ is a $C^r$ smooth nonlinear function for some integer $r\ge 2$ such that
$\boldsymbol{f}(\boldsymbol{x},\dot{\boldsymbol{x}})\sim \mathcal{O}(|\boldsymbol{x}|^2,|\boldsymbol{x}||\dot{\boldsymbol{x}}|,|\dot{\boldsymbol{x}}|^2)$; $ \boldsymbol{f}^{\mathrm{ext}}(\boldsymbol{x},\dot{\boldsymbol{x}},\Omega t)$ denotes an external (parametric) harmonic excitation with forcing frequency $\Omega$ and scalar amplitude $\epsilon$ such that $\epsilon=0$ corresponds to the unforced autonomous limit of the system; $\boldsymbol{g}:\mathbb{R}^n\to\mathbb{R}^{n_\mathrm{c}}$ ($n_\mathrm{c}<n$) represents a set of $C^r$ smooth configuration constraints with $\boldsymbol{G}=\partial\boldsymbol{g}/\partial\boldsymbol{x}:\mathbb{R}^n\to\mathbb{R}^{n_\mathrm{c}\times n}$ being the Jacobian of the constraints; and $\boldsymbol{\mu}$ denotes a vector of Lagrange multipliers corresponding to the configuration constraints~\cite{shabana2020dynamics}.

Without loss of generality, we assume $\boldsymbol{g}(\boldsymbol{0})=\boldsymbol{0}$ such that the origin $(\boldsymbol{x},\dot{\boldsymbol{x}},\boldsymbol{\mu})=\boldsymbol{0}$ of the phase space is a fixed point of system~\eqref{eq:eom-second-dae} when $\epsilon=0$. We further assume that the matrix $\boldsymbol{G}$ is of full rank, i.e., the constraints $\boldsymbol{g}$ are not redundant. Consequently, the vector $\boldsymbol{\mu}$ of Lagrange multipliers is well-defined. Letting $\boldsymbol{G}_0:=\boldsymbol{G}(\boldsymbol{0})$, the configuration constraints can be rewritten as
\begin{equation}
    \boldsymbol{g}(\boldsymbol{x})=\boldsymbol{G}_0\boldsymbol{x}+\boldsymbol{g}_\mathrm{nl}(\boldsymbol{x})=\mathbf{0},
\end{equation}
where $\boldsymbol{g}_\mathrm{nl}$ is a $C^r$ smooth nonlinear function such that $\boldsymbol{g}_\mathrm{nl}(\boldsymbol{x})\sim\mathcal{O}(|\boldsymbol{x}|^2)$. Accordingly, we have
\begin{equation}
    \boldsymbol{G}=\boldsymbol{G}_0+\boldsymbol{G}_\mathrm{nl}(\boldsymbol{x}),\quad \boldsymbol{G}_\mathrm{nl}=\partial\boldsymbol{g}_\mathrm{nl}/{\partial\boldsymbol{x}}.
\end{equation}

We transform the second-order DAE system~\eqref{eq:eom-second-dae} into a first-order form as
\begin{equation}
\label{eq:full-first}
\boldsymbol{B}\dot{\boldsymbol{z}}	=\boldsymbol{A}\boldsymbol{z}+\boldsymbol{F}(\boldsymbol{z})+\epsilon\boldsymbol{F}^{\mathrm{ext}}(\boldsymbol{z},{\Omega t}),
\end{equation}
where
\begin{gather}
\boldsymbol{z}=\begin{pmatrix}\boldsymbol{x}\\\dot{\boldsymbol{x}}\\\boldsymbol{\mu}\end{pmatrix}\in\mathbb{R}^{2n+n_\mathrm{c}},\nonumber\\
\boldsymbol{A}=\begin{pmatrix}-\boldsymbol{K} 
& \boldsymbol{0} & -\boldsymbol{G}_0^\mathrm{T}\\\boldsymbol{0} & \boldsymbol{M} & \boldsymbol{0} \\ \boldsymbol{G}_0& \boldsymbol{0} &\boldsymbol{0} \end{pmatrix},\,\,
\boldsymbol{B}=\begin{pmatrix}\boldsymbol{C} 
& \boldsymbol{M} & \boldsymbol{0}\\\boldsymbol{M} & \boldsymbol{0} & \boldsymbol{0}\\ \boldsymbol{0}& \boldsymbol{0}& \boldsymbol{0}\end{pmatrix},\nonumber\\
\boldsymbol{F}(\boldsymbol{z})=\begin{pmatrix}\boldsymbol{-\boldsymbol{f}(\boldsymbol{x},\dot{\boldsymbol{x}})}-\left(\boldsymbol{G}_{\mathrm{nl}}(\boldsymbol{x})\right)^\mathrm{T}\boldsymbol{\mu}\\\boldsymbol{0}\\\boldsymbol{g}_\mathrm{nl}(\boldsymbol{x})\end{pmatrix},\nonumber\\
\boldsymbol{F}^{\mathrm{ext}}(\boldsymbol{z},\Omega t) = \begin{pmatrix}\boldsymbol{f}^{\mathrm{ext}}(\boldsymbol{x},\dot{\boldsymbol{x}},\Omega t)\\\boldsymbol{0}\\\boldsymbol{0}\end{pmatrix}.\label{eq:zABF}
\end{gather}
The linearization of system~\eqref{eq:full-first} at the origin leads to the eigenvalue problem
\begin{equation}
	\label{eq:eig-AB}
	\boldsymbol{A}\boldsymbol{v}_j=\lambda_j\boldsymbol{B}\boldsymbol{v}_j,\quad \boldsymbol{u}_j^\ast \boldsymbol{A}=\lambda_j \boldsymbol{u}_j^\ast \boldsymbol{B},
\end{equation}
for $j=1,\cdots,2n+n_c$, where $\lambda_j$ is a generalized eigenvalue and $\boldsymbol{v}_j$ and $\boldsymbol{u}_j$ are the corresponding right and left eigenvectors, respectively.

Since the $n_\mathrm{c}$ constraints are well-defined, the mechanical system~\eqref{eq:eom-second-dae} effectively has $n-n_\mathrm{c}$ degrees of freedom. This results in the matrix pair $(\boldsymbol{A},\boldsymbol{B})$ having $2(n-n_\mathrm{c})$ eigenvalues with finite magnitude. We assume that the linear matrix pencil $(\boldsymbol{A},\boldsymbol{B})$ is regular, namely, there exists $\lambda\in\mathbb{C}$ such that $\mathrm{det}(\lambda\boldsymbol{B}-\boldsymbol{A})\neq0$. Then, the remaining $3n_\mathrm{c}$ eigenvalues of the system have infinite magnitude due to the singularity of $~\boldsymbol{B}$~\cite{benner2015numerical}. We note that this regularity assumption is already satisfied when $\boldsymbol{M}$ is non-singular and the constraints $\boldsymbol{g}$ are not redundant. The eigenvectors corresponding to the infinite eigenvalues are called \emph{constraint modes}~\cite{cammarata2021global,cardona1989time}. Further details about the spectrum of the linear part of the DAE system~\eqref{eq:eom-second-dae} are given in Appendix~\ref{sec:spectrum-dae}. 

We now assume that the real parts of all finite-magnitude eigenvalues are strictly less than zero, which is the case for dissipative mechanical systems.  Hence, the fixed point of the linearized system $\boldsymbol{B}\dot{\boldsymbol{z}}=\boldsymbol{A}\boldsymbol{z}$ is asymptotically stable~\cite{du2013robust}. We sort the finite-magnitude eigenvalues in decreasing order of their real parts as
\begin{equation}
	\mathrm{Re}(\lambda_{2n-2n_\mathrm{c}})\leq\cdots\leq\mathrm{Re}(\lambda_{1})<0.
	\label{eq:eig_sort}
\end{equation}

We have listed all eigenvalues of finite magnitude here for completeness. However, as we will see, it is not necessary to calculate all eigenvalues in our SSM computations because we employ the computation procedure proposed in~\cite{SHOBHIT}. Following that procedure, the SSM and its reduced dynamics are computed in the physical coordinates using only the master modes associated with the SSM.

\begin{remark}
	\label{rem:orth_cons_modes}
The $3n_\mathrm{c}$ constraint modes of system~\eqref{eq:full-first} are orthogonal to the configuration space $(x_1,\cdots,x_n)$. Indeed, for each constraint mode, we have $\boldsymbol{x}=\boldsymbol{0}$ and $\boldsymbol{\mu}\neq0$ (see~Eq. \eqref{eq:group-d} in Appendix~\ref{sec:spectrum-dae}). As the constraint modes exhibit infinite-magnitude eigenvalues, they are irrelevant for model reduction via slow SSMs. Hence, the constraint modes are not included in the master subspace of the relevant SSM and need not be computed.
\end{remark}

\section{Spectral submanifolds for constrained mechanical systems}
\label{sec:ssm-theory}
In this section, we first review the definition of spectral submanifolds for unconstrained systems whose governing equations are in the form of ODEs. Then, we show how this definition can be extended to constrained systems whose governing equations are DAEs.

\subsection{SSM theory for unconstrained systems}
\label{sec:ssm-ode}
\begin{sloppypar}
For systems without configuration constraints, we have $n_\mathrm{c}=0$ and
\begin{gather}
\label{eq:zABF-unconstrained}
\boldsymbol{z}=\begin{pmatrix}\boldsymbol{x}\\\dot{\boldsymbol{x}}\end{pmatrix},\quad
\boldsymbol{A}=\begin{pmatrix}-\boldsymbol{K} 
& \boldsymbol{0}\\\boldsymbol{0} & \boldsymbol{M}\end{pmatrix},\nonumber\\
\boldsymbol{B}=\begin{pmatrix}\boldsymbol{C} 
& \boldsymbol{M}\\\boldsymbol{M} & \boldsymbol{0}\end{pmatrix},\quad
\boldsymbol{F}(\boldsymbol{z})=\begin{pmatrix}\boldsymbol{-\boldsymbol{f}(\boldsymbol{x},\dot{\boldsymbol{x}})}\\\boldsymbol{0}\end{pmatrix},\nonumber\\
\boldsymbol{F}^{\mathrm{ext}}(\boldsymbol{z},\Omega t) = \begin{pmatrix}\boldsymbol{f}^{\mathrm{ext}}(\boldsymbol{x},\dot{\boldsymbol{x}},\Omega t)\\\boldsymbol{0}\end{pmatrix}.
\end{gather}
We note that for a positive definite mass matrix $\boldsymbol{M}$, the matrix $\boldsymbol{B}$ is invertible. In this work, we consider the SSM constructed around a $2m$-dimensional \emph{master} spectral subspace
\begin{equation}
\label{eq:master-E}
\mathcal{E}=\Span\{\boldsymbol{v}^\mathcal{E}_1,\bar{\boldsymbol{v}}^\mathcal{E}_1,\cdots,\boldsymbol{v}^\mathcal{E}_m,\bar{\boldsymbol{v}}^\mathcal{E}_m\},
\end{equation}
which is spanned by $m$ pairs of underdamped modes corresponding to the eigenvalues $\lambda_1^{\mathcal{E}},\bar{\lambda}_1^{\mathcal{E}},\dots,\lambda_m^{\mathcal{E}},\bar{\lambda}_m^{\mathcal{E}}$. Hence, we have 
\begin{equation}
    \spect(\mathcal{E}) = \{\lambda^\mathcal{E}_1,\bar{\lambda}^\mathcal{E}_1,\cdots,\lambda^\mathcal{E}_m,\bar{\lambda}^\mathcal{E}_m\}.
\end{equation}
We further define the eigenvalues of the linearization of system~\eqref{eq:zABF-unconstrained} as  \begin{equation}
    \spect(\boldsymbol{\Lambda})=\{\lambda_1,\cdots,\lambda_{2n}\}.
\end{equation}
\end{sloppypar}

\subsubsection{Autonomous systems}
In the  $\epsilon=0$ limit of system~\eqref{eq:zABF-unconstrained}, we have the following statement for the existence and uniqueness of the autonomous SSM tangent to $\mathcal{E}$ at the origin~\cite{haller2016nonlinear}.

\begin{sloppypar}
\begin{theorem}
\label{th:autoSSM-existence-uniqueness} Under the non-resonance condition
\begin{gather}
\boldsymbol{a}\cdot\boldsymbol{\lambda}^\mathcal{E}+\boldsymbol{b}\cdot\bar{\boldsymbol{\lambda}}^\mathcal{E}\neq \lambda_k,\nonumber\\
\forall\,\,\lambda_k\in\spect(\boldsymbol{\Lambda})\setminus\spect(\mathcal{E}),\nonumber\\
\forall\,\,\boldsymbol{a},\boldsymbol{b}\in\mathbb{N}_0^m,\,\,2\leq |\boldsymbol{a}+\boldsymbol{b}|\leq\sigma(\mathcal{E}),\label{eq:nonres}
\end{gather}
where the \emph{relative spectral quotient} $\sigma(\mathcal{E})$ associated to the spectral subspace $\mathcal{E}$ is defined as
\begin{equation}
\sigma(\mathcal{E}) = \mathrm{Int}\left(\frac{\min_{\lambda\in\spect(\boldsymbol{\Lambda})\setminus\spect(\mathcal{E})}\mathrm{Re}\lambda}{\max_{\lambda\in\spect(\mathcal{E})}\mathrm{Re}\lambda}\right),
\end{equation}
the following hold for system~\eqref{eq:full-first}:
\begin{enumerate}[label=(\roman*)]
\item There exists a $C^{r}$-smooth, $2m$-dimensional SSM, $\mathcal{W}(\mathcal{E})\subset\mathbb{R}^{2n}$, which is tangent to the spectral subspace $\mathcal{E}$ at the origin
\item $\mathcal{W}(\mathcal{E})$ is unique among all class $C^{\sigma(\mathcal{E})+1}$ invariant manifolds of system~\eqref{eq:full-first} satisfying (i),
\item $\mathcal{W}(\mathcal{E})$ can be viewed as an embedding of an open set in the reduced coordinates $\boldsymbol{p}\in \mathbb{C}^{2m}$ into the phase space of system~\eqref{eq:full-first} via a map $\boldsymbol{W}(\boldsymbol{p}):\mathbb{C}^{2m}\to\mathbb{R}^{2n}$.
\item There exists a polynomial series $\boldsymbol{R}(\boldsymbol{p}):{\mathbb{C}^{2m}}\to{\mathbb{C}^{2m}}$ satisfying the invariance equation
\begin{equation}
\label{eq:invariance-auto}
\boldsymbol{B}{D}_{\boldsymbol{p}}\boldsymbol{W}(\boldsymbol{p}) \boldsymbol{R}(\boldsymbol{p})=\boldsymbol{A}\boldsymbol{W}(\boldsymbol{p})+\boldsymbol{F}( \boldsymbol{W}(\boldsymbol{p})),
\end{equation}
such that the reduced dynamics on $\mathcal{W}(\mathcal{E})$ can be expressed as
\begin{equation}
\label{eq:red-dyn-auto}
\dot{\boldsymbol{p}} = \boldsymbol{R}(\boldsymbol{p}).
\end{equation}
\end{enumerate}
\end{theorem}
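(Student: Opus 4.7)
The plan is to reduce the statement to the classical existence and uniqueness theorem for spectral submanifolds of ODEs at a hyperbolic fixed point, due to Cabr\'e, Fontich and de la Llave and specialized to mechanical systems by Haller and Ponsioen. Since the mass matrix $\boldsymbol{M}$ is assumed positive definite, the matrix $\boldsymbol{B}$ in~\eqref{eq:zABF-unconstrained} is invertible, and left-multiplying~\eqref{eq:full-first} at $\epsilon=0$ by $\boldsymbol{B}^{-1}$ yields the smooth ODE
\begin{equation*}
\dot{\boldsymbol{z}}=\tilde{\boldsymbol{A}}\boldsymbol{z}+\tilde{\boldsymbol{F}}(\boldsymbol{z}),\qquad \tilde{\boldsymbol{A}}:=\boldsymbol{B}^{-1}\boldsymbol{A},\;\;\tilde{\boldsymbol{F}}:=\boldsymbol{B}^{-1}\boldsymbol{F},
\end{equation*}
with a hyperbolic fixed point at the origin whose linear spectrum coincides with $\spect(\boldsymbol{\Lambda})$. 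The master subspace $\mathcal{E}$ is an invariant subspace of $\tilde{\boldsymbol{A}}$ with the same spectrum $\spect(\mathcal{E})$, so the non-resonance condition~\eqref{eq:nonres} and the relative spectral quotient $\sigma(\mathcal{E})$ have identical meaning before and after this rewriting.

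First I would invoke the Cabr\'e-Fontich-de la Llave theorem on the ODE above to obtain a $C^r$-smooth $2m$-dimensional invariant manifold $\mathcal{W}(\mathcal{E})$ tangent to $\mathcal{E}$ at the origin, together with its uniqueness among $C^{\sigma(\mathcal{E})+1}$ invariant manifolds satisfying the same tangency. This yields items (i) and (ii). For (iii) I would parametrize $\mathcal{W}(\mathcal{E})$ by reduced coordinates $\boldsymbol{p}\in\mathbb{C}^{2m}$ aligned with the eigenbasis of $\tilde{\boldsymbol{A}}|_\mathcal{E}$, taking $\boldsymbol{W}(\boldsymbol{p}):\mathbb{C}^{2m}\to\mathbb{R}^{2n}$ to be the embedding whose linearization at the origin is the matrix of master right eigenvectors $[\boldsymbol{v}^\mathcal{E}_1,\bar{\boldsymbol{v}}^\mathcal{E}_1,\dots,\boldsymbol{v}^\mathcal{E}_m,\bar{\boldsymbol{v}}^\mathcal{E}_m]$ of the generalized problem~\eqref{eq:eig-AB}.

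For (iv), substituting $\boldsymbol{z}=\boldsymbol{W}(\boldsymbol{p})$ into the ODE form and applying the chain rule gives $D_{\boldsymbol{p}}\boldsymbol{W}(\boldsymbol{p})\,\boldsymbol{R}(\boldsymbol{p})=\tilde{\boldsymbol{A}}\boldsymbol{W}(\boldsymbol{p})+\tilde{\boldsymbol{F}}(\boldsymbol{W}(\boldsymbol{p}))$, and left-multiplying by $\boldsymbol{B}$ recovers the invariance equation~\eqref{eq:invariance-auto} in the form used elsewhere in the paper. The series $\boldsymbol{W}(\boldsymbol{p})$ and $\boldsymbol{R}(\boldsymbol{p})$ can then be constructed order by order: at each multi-index degree $|\boldsymbol{a}+\boldsymbol{b}|$, matching coefficients produces a linear cohomological equation whose solvability is precisely the non-resonance condition~\eqref{eq:nonres}. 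Resonant monomials, when unavoidable, are absorbed into $\boldsymbol{R}(\boldsymbol{p})$ rather than $\boldsymbol{W}(\boldsymbol{p})$, which yields the expansion form used in the computational sections.

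The potential obstacle is verifying that the non-resonance condition, stated here with respect to the generalized spectrum of $(\boldsymbol{A},\boldsymbol{B})$, is the correct hypothesis for the ODE version of the theorem. Because $\boldsymbol{B}$ is invertible in the unconstrained case, the generalized eigenvalue problem coincides with the standard eigenvalue problem for $\tilde{\boldsymbol{A}}$, so the condition transfers verbatim and no new analytic content is required beyond Haller-Ponsioen. The genuine difficulty is deferred to the next subsection, where the singularity of $\boldsymbol{B}$ in the DAE setting rules out this inversion and forces a new construction built directly on the matrix pencil.
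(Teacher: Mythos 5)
Your proposal is correct and follows essentially the same route as the paper: the paper's proof is a one-line reduction to Theorem 3 of Haller and Ponsioen (itself based on Cabr\'e et al.), justified by the invertibility of $\boldsymbol{B}$ in the unconstrained case. Your additional detail on the cohomological equations and the placement of resonant monomials in $\boldsymbol{R}(\boldsymbol{p})$ is consistent with the cited machinery but not part of the paper's stated argument.
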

\end{sloppypar}
\begin{proof}
This theorem is simply a restatement of Theorem 3 by Haller and Ponsioen~\cite{haller2016nonlinear} (given the matrix $\boldsymbol{B}$ is invertible), which is based on more abstract results by Cabr\'e et al.~\cite{cabre2003parameterization-i,cabre2003parameterization-ii,cabre2005parameterization-iii}.
\end{proof}

\begin{sloppypar}
Note that the non-resonance condition~\eqref{eq:nonres} allows for resonant eigenvalues within $\spect(\mathcal{E})$. Internally resonant mechanical systems, for instance, satisfy a (near) \emph{inner} resonance relationship of the form
\begin{equation}
	\label{eq:res-inner}
	\lambda_i^\mathcal{E}\approx\boldsymbol{l}\cdot\boldsymbol{\lambda}^\mathcal{E}+\boldsymbol{j}\cdot\bar{\boldsymbol{\lambda}}^\mathcal{E},\quad \bar{\lambda}_i^\mathcal{E}\approx\boldsymbol{j}\cdot\boldsymbol{\lambda}^\mathcal{E}+\boldsymbol{l}\cdot\bar{\boldsymbol{\lambda}}^\mathcal{E}
\end{equation}
for some $i\in\{1,\cdots,m\}$, where $\boldsymbol{l},\boldsymbol{j}\in\mathbb{N}_0^m$, $|\boldsymbol{l}+\boldsymbol{j}|:=\sum_{k=1}^m (l_k+j_k)\geq2$, and $\boldsymbol{\lambda}_\mathcal{E}=(\lambda^\mathcal{E}_1,\cdots,\lambda^\mathcal{E}_m)$. Thus, SSM theory can be used to treat internally resonant mechanical systems by choosing the spectral subspace $\mathcal{E}$ which contains all such resonant eigenvalues~\cite{part-i}. 

As an example of the aforementioned inner resonances, we consider an internally resonant system such that the master subspace $\mathcal{E}$ has two pairs of modes that exhibit near 1:2 inner resonances, i.e., $\lambda_2^\mathcal{E}\approx2\lambda_1^\mathcal{E}$ and $\bar{\lambda}_2^\mathcal{E}\approx2\bar{\lambda}_1^\mathcal{E}$. Then we have
\begin{gather}
	\lambda_1^\mathcal{E}\approx(j+1)\lambda_1^\mathcal{E}+j\bar{\lambda}_1^\mathcal{E}+l\lambda_2^\mathcal{E}+l\bar{\lambda}_2^\mathcal{E},\nonumber\\
	\lambda_2^\mathcal{E}\approx(j+1)\lambda_2^\mathcal{E}+j\bar{\lambda}_2^\mathcal{E}+l\lambda_1^\mathcal{E}+l\bar{\lambda}_1^\mathcal{E},\nonumber\\
	\lambda_2^\mathcal{E}\approx(j+2)\lambda_1^\mathcal{E}+j\bar{\lambda}_1^\mathcal{E}+l\lambda_2^\mathcal{E}+l\bar{\lambda}_2^\mathcal{E}
\end{gather}
for all $j,l\in\mathbb{N}_0$.
\end{sloppypar}

\subsubsection{Nonautonomous systems}
Under small-amplitude periodic forcing, i.e., $0<\epsilon\ll1$, the SSMs persist as periodic whiskers as per the following statement ~\cite{haller2016nonlinear}.
\begin{theorem}
\label{th:SSM-existence-uniqueness}
Assume the non-resonance condition
\begin{gather}
\boldsymbol{a}\cdot\mathrm{Re}(\boldsymbol{\lambda}^\mathcal{E})+\boldsymbol{b}\cdot\mathrm{Re}(\bar{\boldsymbol{\lambda}}^\mathcal{E})\neq \mathrm{Re}(\lambda_k),\nonumber\\
\forall\,\,\lambda_k\in\spect(\boldsymbol{\Lambda})\setminus\spect(\mathcal{E})\nonumber\\
\forall\,\,\boldsymbol{a},\boldsymbol{b}\in\mathbb{N}_0^m,\,\,2\leq |\boldsymbol{a}+\boldsymbol{b}|\leq\Sigma(\mathcal{E}),\label{eq:nonresnonaut}
\end{gather}
where the \emph{absolute spectral quotient} $\Sigma(\mathcal{E})$ associated to the spectral subspace $\mathcal{E}$ is defined as
\begin{equation}
\Sigma(\mathcal{E}) = \mathrm{Int}\left(\frac{\min_{\lambda\in\spect(\boldsymbol{\Lambda})}\mathrm{Re}\lambda}{\max_{\lambda\in\spect(\mathcal{E})}\mathrm{Re}\lambda}\right).
\end{equation}
Then the following hold for system~\eqref{eq:full-first} for $\epsilon>0$, small enough:
\begin{enumerate}[label=(\roman*)]
\item There exists a $2m$-dimensional, time-periodic, class $C^r$-SSM, $\mathcal{W}(\mathcal{E},\Omega t)$, that depends smoothly on $\epsilon$,
\item {The SSM $\mathcal{W}(\mathcal{E},\Omega t)$ is unique among all $C^{\Sigma(\mathcal{E})+1}$ invariant manifolds satisfying (i)}
\item $\mathcal{W}(\mathcal{E},\Omega t)$ can be viewed as an embedding of an open set in the reduced coordinates $(\boldsymbol{p},\phi)$ into the phase space of system~\eqref{eq:full-first} via the map
\begin{equation}
\label{eq:map-nonauto}
\boldsymbol{W}_{\epsilon}(\boldsymbol{p},\phi):{\mathbb{C}^{2m}}\times{S}^1\to\mathbb{R}^{2n}\quad .
\end{equation}
\item There exists a polynomial function $\boldsymbol{R}_\epsilon(\boldsymbol{p},\phi):{\mathbb{C}^{2m}}\times{S}^1\to\mathbb{C}^{2m}$ satisfying the invariance equation
\begin{align}
 & \boldsymbol{B}\left[{D}_{\boldsymbol{p}}\boldsymbol{W}_{\epsilon}(\boldsymbol{p},\phi) \boldsymbol{R}_{\epsilon}(\boldsymbol{p},\phi)+{D}_{\phi}\boldsymbol{W}_{\epsilon}(\boldsymbol{p},\phi) \Omega\right]\nonumber\\
& =\boldsymbol{A}\boldsymbol{W}_{\epsilon}(\boldsymbol{p},\phi)+\boldsymbol{F}( \boldsymbol{W}_{\epsilon}(\boldsymbol{p},\phi))\nonumber\\
& \quad+\epsilon\boldsymbol{F}^{\mathrm{ext}}(\boldsymbol{W}_{\epsilon}(\boldsymbol{p},\phi),{\phi}),\label{eq:invariance}
\end{align}
such that the reduced dynamics on the SSM, $\mathcal{W}(\mathcal{E},\Omega t)$, can be expressed as
\begin{equation}
\label{eq:red-dyn}
\dot{\boldsymbol{p}} = \boldsymbol{R}_\epsilon(\boldsymbol{p},\phi),\quad \dot{\phi}=\Omega.
\end{equation}
\end{enumerate}
\end{theorem}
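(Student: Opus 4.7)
The plan is to reduce the nonautonomous statement to the autonomous Theorem~\ref{th:autoSSM-existence-uniqueness} by lifting the periodically forced system to an autonomous one on an extended phase space, and then invoke the parameterization results of Cabr\'e, Fontich and de la Llave~\cite{cabre2003parameterization-i,cabre2003parameterization-ii,cabre2005parameterization-iii} already used in the proof of Theorem~\ref{th:autoSSM-existence-uniqueness}.

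First, I would introduce the phase variable $\phi = \Omega t \in S^1$ to rewrite system~\eqref{eq:full-first} as the autonomous extended system $\boldsymbol{B}\dot{\boldsymbol{z}} = \boldsymbol{A}\boldsymbol{z} + \boldsymbol{F}(\boldsymbol{z}) + \epsilon\,\boldsymbol{F}^{\mathrm{ext}}(\boldsymbol{z}, \phi)$, $\dot{\phi} = \Omega$, posed on $\mathbb{R}^{2n} \times S^1$. At $\epsilon = 0$ the origin lifts to an invariant circle $\Gamma_0 = \{\boldsymbol{0}\} \times S^1$, and its linearized normal spectrum is the union of $\spect(\boldsymbol{\Lambda})$ with the purely imaginary Fourier spectrum $\{ik\Omega : k \in \mathbb{Z}\}$ contributed by the $\phi$-factor. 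The natural candidate master subbundle over $\Gamma_0$ is $\mathcal{E} \oplus TS^1$, which has dimension $2m+1$. Because the $\phi$-direction contributes only eigenvalues with zero real part while $\mathrm{Re}(\lambda) < 0$ for every $\lambda \in \spect(\boldsymbol{\Lambda})$, the spectral-gap and nonresonance estimates for the lifted system reduce to inequalities on real parts alone; this is precisely why condition~\eqref{eq:nonresnonaut} is stated in terms of $\mathrm{Re}(\cdot)$, and why the \emph{absolute} spectral quotient $\Sigma(\mathcal{E})$ rather than the relative one governs the polynomial order up to which nonresonance must be enforced, since the infinite family $\{ik\Omega\}$ must be counted in the spectral-gap measurement.

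With this setup, I would apply the parameterization method to the extended autonomous system to obtain a $(2m+1)$-dimensional $C^r$ invariant manifold tangent to $\mathcal{E} \oplus TS^1$ along $\Gamma_0$, uniquely determined among $C^{\Sigma(\mathcal{E})+1}$ invariant manifolds with the same tangency. Slicing this manifold at fixed $\phi$ yields the asserted family $\mathcal{W}(\mathcal{E}, \Omega t)$ of part (i) with uniqueness as in (ii). The embedding $\boldsymbol{W}_\epsilon$ and reduced dynamics $\boldsymbol{R}_\epsilon$ of (iii)--(iv) come from parameterizing the invariant manifold over $(\boldsymbol{p}, \phi) \in \mathbb{C}^{2m} \times S^1$; substituting $\boldsymbol{z}(t) = \boldsymbol{W}_\epsilon(\boldsymbol{p}(t), \phi(t))$ with $\dot\phi = \Omega$ into the extended equations and applying the chain rule produces the invariance equation~\eqref{eq:invariance}. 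Smooth dependence on $\epsilon$ follows from the implicit function theorem applied to this invariance equation on a suitable Banach space of admissible parameterizations, since $\epsilon\,\boldsymbol{F}^{\mathrm{ext}}$ enters as a regular perturbation of the unperturbed invariant manifold produced by Theorem~\ref{th:autoSSM-existence-uniqueness}.

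The main obstacle I anticipate is the careful bookkeeping of the augmented spectrum $\spect(\boldsymbol{\Lambda}) \cup \{ik\Omega\}_{k \in \mathbb{Z}}$ of the lifted linearization: one must verify that the infinite family of purely imaginary phase eigenvalues neither destroys the spectral gap separating $\mathcal{E} \oplus TS^1$ from its normal complement nor introduces new resonances with the master spectrum at polynomial order up to $\Sigma(\mathcal{E})$. Once this verification is complete, the claim follows as a direct transcription of the nonautonomous version of Theorem~3 of Haller and Ponsioen~\cite{haller2016nonlinear} to the present extended setting.
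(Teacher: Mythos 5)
Your overall strategy --- autonomize the forcing via $\phi=\Omega t$ and work on the extended phase space $\mathbb{R}^{2n}\times S^1$ --- is indeed the standard route to results of this type, and your observation about why condition~\eqref{eq:nonresnonaut} involves only real parts (the phase direction contributes no real part, so the non-resonance condition decouples from $\Omega$) is on target. The paper itself, however, disposes of the theorem in one line: it is a restatement of Theorem~4 of Haller and Ponsioen~\cite{haller2016nonlinear} (for invertible $\boldsymbol{B}$), which rests on the parameterization results of Haro and de la Llave~\cite{haro2006parameterization,haro2006parameterization-num} for invariant tori and their whiskers --- \emph{not} on the fixed-point results of Cabr\'e et al.\ that underlie the autonomous Theorem~\ref{th:autoSSM-existence-uniqueness}.

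That distinction is where your argument has a genuine gap. After lifting, the unperturbed invariant object is the circle $\Gamma_0=\{\boldsymbol{0}\}\times S^1$, not a fixed point, so you cannot simply ``apply the parameterization method'' used in Theorem~\ref{th:autoSSM-existence-uniqueness} to produce the $(2m+1)$-dimensional manifold tangent to $\mathcal{E}\oplus TS^1$: the Cabr\'e--Fontich--de la Llave theorems are formulated for equilibria, with non-resonance and spectral-gap hypotheses stated for the eigenvalues of a linearization at a point. The correct machinery for a normally hyperbolic invariant circle carrying the rotation $\dot\phi=\Omega$, together with its attached slow manifolds, is precisely the quasi-periodic parameterization theory of Haro and de la Llave, in which the ``augmented spectrum'' bookkeeping you flag as the main obstacle is handled through conditions of the form $\boldsymbol{a}\cdot\boldsymbol{\lambda}^{\mathcal{E}}+\boldsymbol{b}\cdot\bar{\boldsymbol{\lambda}}^{\mathcal{E}}+\mathrm{i}k\Omega\neq\lambda_j$, whose real parts reduce exactly to~\eqref{eq:nonresnonaut}. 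Your closing appeal to ``the nonautonomous version of Theorem~3 of Haller and Ponsioen'' is in substance the citation the paper makes, but that result is not reachable as a ``direct transcription'' of the autonomous fixed-point theorem; the torus/whisker version is a separate theorem with its own proof. If you keep the lifting argument, replace the invocation of Cabr\'e et al.\ with the Haro--de la Llave results, at which point your sketch becomes an expanded version of the paper's one-line proof.
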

\begin{proof}
This theorem is simply a restatement of Theorem 4 by Haller and Ponsioen~\cite{haller2016nonlinear} (given the matrix $\boldsymbol{B}$ is invertible), which is based on more abstract results by Haro and de la Llave~\cite{haro2006parameterization,haro2006parameterization-num}.
\end{proof}

Note that the non-resonance condition~\eqref{eq:nonresnonaut} is independent of the frequency of periodic forcing $\Omega$. We also allow for the external forcing frequency $\Omega$ to be nearly resonant with the master eigenvalues as (see~\cite{part-i}):
\begin{equation}
	\label{eq:res-forcing}
	\boldsymbol{\lambda}^{\mathcal{E}}-\mathrm{i}\boldsymbol{r}\Omega\approx0, \,\,\boldsymbol{r}\in\mathbb{Q}^m.
\end{equation}
To illustrate the external resonance above, we again consider the system whose master subspace $\mathcal{E}$ has two pairs of modes that exhibit near 1:2 inner resonances, i.e., $\lambda_2^\mathcal{E}\approx2\lambda_1^\mathcal{E}$ and $\bar{\lambda}_2^\mathcal{E}\approx2\bar{\lambda}_1^\mathcal{E}$. Now, if the external forcing frequency $\Omega$ is nearly resonant with the first pair of modes, i.e., $\lambda_1^\mathcal{E}\approx\mathrm{i}\Omega,~\lambda_2^\mathcal{E}\approx \mathrm{i}2\Omega$, we have $\boldsymbol{r}=(1,2)$ in \eqref{eq:res-forcing} In contrast, if the external forcing resonates with the second pair of modes, i.e., $\lambda_1^\mathcal{E} \approx \frac{1}{2}\mathrm{i}\Omega, ~\lambda_2^\mathcal{E}\approx \mathrm{i}\Omega$, we have $\boldsymbol{r}=(1/2,1)$.

\subsection{Extension to constrained systems}
\label{sec:ext_cons}
For constrained systems, we have $0<n_\mathrm{c}<n$ and the matrix $\boldsymbol{B}$ in~\eqref{eq:zABF} is not invertible. However, the DAEs~\eqref{eq:eom-second-dae} can be converted into an equivalent system of ODEs via an index reduction technique, as we detail below. In principle, the existence and uniqueness results for SSMs given in Theorems~\ref{th:autoSSM-existence-uniqueness} and \ref{th:SSM-existence-uniqueness} are then applicable to constrained systems as well. In practice, however, this conversion is not required during computations as we show later.

\begin{sloppypar}
\begin{theorem}
\label{th:dae2ode}
The DAE system~\eqref{eq:eom-second-dae} is equivalent to the following system of ODEs
\begin{equation}
\label{eq:eom-ode-maggi}
    \mathbf{B}(\mathbf{z})\dot{\mathbf{z}}=\mathbf{A}(\mathbf{z})\mathbf{z}+\mathbf{F}(\mathbf{z})+\epsilon\mathbf{F}^{\mathrm{ext}}(\mathbf{z},\Omega t)
\end{equation}
with
\begin{gather}
\mathbf{z}=\begin{pmatrix}{\boldsymbol{x}}\\{\boldsymbol{e}}\end{pmatrix},\quad
\mathbf{A}(\mathbf{z})=\begin{pmatrix}
    -\alpha\widecheck{\boldsymbol{\Gamma}}(\boldsymbol{x})\boldsymbol{G}_0 & \boldsymbol{\Gamma}(\boldsymbol{x})\\
    -[\boldsymbol{\Gamma}(\boldsymbol{x})]^\mathrm{T}\boldsymbol{K} & \boldsymbol{0}
    \end{pmatrix},\nonumber\\
    \scalebox{0.8}{%
    $\mathbf{B}(\mathbf{z})=\begin{pmatrix}\boldsymbol{I} &\boldsymbol{0}\\ [\boldsymbol{\Gamma}(\boldsymbol{x})]^\mathrm{T}\boldsymbol{C}-\alpha[\boldsymbol{\Gamma}(\boldsymbol{x})]^\mathrm{T}\boldsymbol{M}\widecheck{\boldsymbol{\Gamma}}(\boldsymbol{x})\boldsymbol{G}(\boldsymbol{x}) & \boldsymbol{\Gamma}^\mathrm{T}\boldsymbol{M}\boldsymbol{\Gamma}
    \end{pmatrix}$},\nonumber\\
    \scalebox{0.8}{%
    $\mathbf{F}(\boldsymbol{z})=\begin{pmatrix}
    -\alpha\widecheck{\boldsymbol{\Gamma}}(\boldsymbol{x})\boldsymbol{g}_{\mathrm{nl}}(\boldsymbol{x})\\ -[\boldsymbol{\Gamma}(\boldsymbol{x})]^\mathrm{T}\mathbf{f}(\mathbf{z})-\boldsymbol{\Gamma}^\mathrm{T}\boldsymbol{M}\dot{\boldsymbol{\Gamma}}\boldsymbol{e}+\alpha[\boldsymbol{\Gamma}(\boldsymbol{x})]^\mathrm{T}\boldsymbol{M}\dot{\widecheck{\boldsymbol{\Gamma}}}(\boldsymbol{x})\boldsymbol{g}(\boldsymbol{x})
    \end{pmatrix}$},\nonumber\\
    \mathbf{F}^{\mathrm{ext}}(\mathbf{z},\Omega t) =\begin{pmatrix}
    \boldsymbol{0}\\\boldsymbol{\Gamma}^\mathrm{T}(\boldsymbol{x})\boldsymbol{f}^{\mathrm{ext}}(\boldsymbol{x},\dot{\boldsymbol{x}},\Omega t)
\end{pmatrix},
\end{gather}
where $\boldsymbol{e}\in\mathbb{R}^{n-n_\mathrm{c}}$ is a set of generalized speeds; $\alpha\in\mathbb{R}^+$ is a user-defined stabilization parameter in differentiating the configuration constraints; $\boldsymbol{\Gamma}\in\mathbb{R}^{n\times (n-n_\mathrm{c})}$ and $\widecheck{\boldsymbol{\Gamma}}\in\mathbb{R}^{n_\mathrm{c}\times n}$ are appropriately defined full rank matrices (see Appendix~\ref{sec:appendix-dae2ode}); and $\mathbf{f}(\mathbf{z})=\boldsymbol{f}(\boldsymbol{x},\boldsymbol{\Gamma}(\boldsymbol{x})\boldsymbol{e}-\alpha\widecheck{\boldsymbol{\Gamma}}(\boldsymbol{x})\boldsymbol{g}(\boldsymbol{x}))$.
\end{theorem}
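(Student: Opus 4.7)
My plan is to view the conversion as a Maggi--Baumgarte reformulation: introduce generalized speeds $\boldsymbol{e}$ that parameterize the admissible velocities tangent to the constraint manifold, project the equation of motion onto the null space of $\boldsymbol{G}$ to kill the Lagrange multipliers, and use a Baumgarte-type stabilization term (controlled by $\alpha$) to make the hidden constraint $\boldsymbol{g}(\boldsymbol{x})=\boldsymbol{0}$ attractive rather than merely preserved. Concretely, I would let $\boldsymbol{\Gamma}(\boldsymbol{x})\in\mathbb{R}^{n\times(n-n_\mathrm{c})}$ be a smooth full-rank basis of $\ker\boldsymbol{G}(\boldsymbol{x})$ (so $\boldsymbol{G}\boldsymbol{\Gamma}\equiv\boldsymbol{0}$) and $\widecheck{\boldsymbol{\Gamma}}(\boldsymbol{x})\in\mathbb{R}^{n\times n_\mathrm{c}}$ be a smooth right-inverse-type matrix satisfying $\boldsymbol{G}(\boldsymbol{x})\widecheck{\boldsymbol{\Gamma}}(\boldsymbol{x})=\boldsymbol{I}_{n_\mathrm{c}}$; the precise construction is deferred to the appendix cited in the statement.

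The first block of~\eqref{eq:eom-ode-maggi} then \emph{defines} the kinematic relation
\begin{equation*}
\dot{\boldsymbol{x}} = \boldsymbol{\Gamma}(\boldsymbol{x})\boldsymbol{e} - \alpha\widecheck{\boldsymbol{\Gamma}}(\boldsymbol{x})\boldsymbol{g}(\boldsymbol{x}),
\end{equation*}
which is exactly what one obtains by reading off the first rows of $\mathbf{B}$, $\mathbf{A}$ and $\mathbf{F}$ once $\boldsymbol{g}(\boldsymbol{x})=\boldsymbol{G}_0\boldsymbol{x}+\boldsymbol{g}_\mathrm{nl}(\boldsymbol{x})$ is recognised. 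Differentiating the constraint along this relation gives $\dot{\boldsymbol{g}} = \boldsymbol{G}\dot{\boldsymbol{x}} = -\alpha\,\boldsymbol{g}$, so $\boldsymbol{g}(\boldsymbol{x}(t))=\boldsymbol{g}(\boldsymbol{x}(0))\mathrm{e}^{-\alpha t}$. Hence if initial data are consistent with the constraint, so is the full trajectory, and small drift off the manifold decays exponentially at rate $\alpha$. The second step is to differentiate the kinematic relation once more, obtaining
\begin{equation*}
\ddot{\boldsymbol{x}} = \boldsymbol{\Gamma}\dot{\boldsymbol{e}} + \dot{\boldsymbol{\Gamma}}\,\boldsymbol{e} - \alpha\,\dot{\widecheck{\boldsymbol{\Gamma}}}\,\boldsymbol{g}(\boldsymbol{x}) - \alpha\,\widecheck{\boldsymbol{\Gamma}}\,\boldsymbol{G}\,\dot{\boldsymbol{x}},
\end{equation*}
where the last equality uses $\dot{\boldsymbol{g}}=\boldsymbol{G}\dot{\boldsymbol{x}}$.

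The third step substitutes this $\ddot{\boldsymbol{x}}$ into the equation of motion~\eqref{eq:eom-second-dae} and left-multiplies by $\boldsymbol{\Gamma}^\mathrm{T}$. Because $\boldsymbol{\Gamma}^\mathrm{T}\boldsymbol{G}^\mathrm{T}\equiv\boldsymbol{0}$, the Lagrange-multiplier term is annihilated, and the remaining terms regroup precisely into the second block row of $\mathbf{B}\dot{\mathbf{z}} = \mathbf{A}\mathbf{z} + \mathbf{F} + \epsilon\mathbf{F}^{\mathrm{ext}}$ as written in the theorem, with $\mathbf{f}(\mathbf{z})=\boldsymbol{f}(\boldsymbol{x},\boldsymbol{\Gamma}\boldsymbol{e}-\alpha\widecheck{\boldsymbol{\Gamma}}\boldsymbol{g})$ capturing the substitution of $\dot{\boldsymbol{x}}$ inside the nonlinearity. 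At this point the ODE~\eqref{eq:eom-ode-maggi} has been derived from the DAE~\eqref{eq:eom-second-dae}.

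The final step is to argue equivalence in both directions. Forward: every solution $(\boldsymbol{x},\boldsymbol{\mu})$ of~\eqref{eq:eom-second-dae} satisfies $\dot{\boldsymbol{x}}\in\ker\boldsymbol{G}(\boldsymbol{x})$, so there exist unique $\boldsymbol{e}(t)$ with $\dot{\boldsymbol{x}}=\boldsymbol{\Gamma}\boldsymbol{e}$ (the stabilization term vanishes since $\boldsymbol{g}\equiv\boldsymbol{0}$), and $(\boldsymbol{x},\boldsymbol{e})$ solves~\eqref{eq:eom-ode-maggi}. Backward: given a solution of~\eqref{eq:eom-ode-maggi} with initial data satisfying $\boldsymbol{g}(\boldsymbol{x}(0))=\boldsymbol{0}$ and $\boldsymbol{G}(\boldsymbol{x}(0))\dot{\boldsymbol{x}}(0)=\boldsymbol{0}$, the exponential decay $\dot{\boldsymbol{g}}=-\alpha\boldsymbol{g}$ forces $\boldsymbol{g}\equiv\boldsymbol{0}$, so the kinematic relation reduces to $\dot{\boldsymbol{x}}=\boldsymbol{\Gamma}\boldsymbol{e}$; reversing the projection by $\boldsymbol{\Gamma}^\mathrm{T}$ and using that the residual of the equation of motion lies in the range of $\boldsymbol{G}^\mathrm{T}$ yields a unique $\boldsymbol{\mu}$ (full rank of $\boldsymbol{G}$) making $(\boldsymbol{x},\boldsymbol{\mu})$ a solution of~\eqref{eq:eom-second-dae}. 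The main obstacle I expect is this last bookkeeping: keeping the $\dot{\widecheck{\boldsymbol{\Gamma}}}\boldsymbol{g}$ and $\widecheck{\boldsymbol{\Gamma}}\boldsymbol{G}\dot{\boldsymbol{x}}$ terms (which vanish on the constraint manifold but are genuinely nonzero away from it) correctly aligned between $\ddot{\boldsymbol{x}}$, the $\mathbf{B}$ block, and the $\mathbf{F}$ block, and articulating the equivalence carefully enough that the role of $\alpha$ as a stabilizer rather than a modification of the dynamics on the constraint manifold is transparent.
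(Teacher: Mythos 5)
Your proposal is correct and follows essentially the same route as the paper's own proof: Maggi's reformulation via generalized speeds $\boldsymbol{e}$ defined through the stabilized constraint $\dot{\boldsymbol{g}}+\alpha\boldsymbol{g}=\boldsymbol{0}$, the kinematic relation $\dot{\boldsymbol{x}}=\boldsymbol{\Gamma}\boldsymbol{e}-\alpha\widecheck{\boldsymbol{\Gamma}}\boldsymbol{g}$, its differentiation, and projection of the equations of motion by $\boldsymbol{\Gamma}^{\mathrm{T}}$ to annihilate the multipliers (the paper obtains $\boldsymbol{\Gamma}$ and $\widecheck{\boldsymbol{\Gamma}}$ as blocks of the inverse of the stacked matrix $(\boldsymbol{G};\widecheck{\boldsymbol{G}})$, which is equivalent to your characterization via $\boldsymbol{G}\boldsymbol{\Gamma}=\boldsymbol{0}$ and $\boldsymbol{G}\widecheck{\boldsymbol{\Gamma}}=\boldsymbol{I}$). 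Your explicit two-directional equivalence argument, including the recovery of a unique $\boldsymbol{\mu}$ from the residual lying in $\mathrm{Range}(\boldsymbol{G}^{\mathrm{T}})$, is a welcome addition that the paper leaves implicit.
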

\begin{proof}
We present the proof of this theorem in Appendix~\ref{sec:appendix-dae2ode}.
\end{proof}
\end{sloppypar}

\begin{remark}
The theorem above provides an ODE reformulation~\eqref{eq:eom-ode-maggi} of the original DAE system~\eqref{eq:eom-second-dae} or equivalently~\eqref{eq:full-first}. The phase space of the original DAE system is $2n+n_\mathrm{c}$ dimensional because $\boldsymbol{z}=(\boldsymbol{x},\dot{\boldsymbol{x}},\boldsymbol{\mu})$, while the phase space dimension of the reformulated system is $2n-n_\mathrm{c}$ because $\mathbf{z}=(\boldsymbol{x},\boldsymbol{e}$). Here the generalized speeds $\boldsymbol{e}$ are also called kinematic characteristics or independent quasi-velocities~\cite{laulusa2008review}.
\end{remark}

\begin{sloppypar}
The above reformulation guarantees the invertibility of $\mathbf{B}(\boldsymbol{z})$ as long as $\boldsymbol{M}$ is invertible~\cite{Bernstein+2009}. Furthermore, as will follow from Theorem~\ref{thm:spectrum_equivalence}, the origin is a stable fixed point of system~\eqref{eq:eom-ode-maggi}. This allows us to apply SSM theory to the reformulated system~\eqref{eq:eom-ode-maggi}. Indeed, Theorems~\ref{th:autoSSM-existence-uniqueness} and~\ref{th:SSM-existence-uniqueness} stated in section~\ref{sec:ssm-ode} can be generalized to the system of ODEs~\eqref{eq:eom-ode-maggi} by extending the constant matrices $\boldsymbol{A}$ and $\boldsymbol{B}$ to $\mathbf{A}(\boldsymbol{z})$ and $\mathbf{B}(\boldsymbol{z})$. 
While useful for establishing theoretical equivalence, this conversion is not desirable in practice due to the following reasons:
\begin{enumerate}
	\item The ODEs in system~\eqref{eq:eom-ode-maggi} are implicit due to the state-dependence in the coefficient matrix $\mathbf{B(z)}$, which adds complexity to the original invariance equations~\eqref{eq:invariance-auto}, \eqref{eq:invariance} that need to be solved for SSM computation.
	 
	\item The construction of the matrices $\boldsymbol{\Gamma}(\boldsymbol{x})$ and $\widecheck{\boldsymbol{\Gamma}}(\boldsymbol{x})$ is not unique and can be computationally challenging (see Appendix~\ref{sec:appendix-dae2ode}).
	
	\item The external forcing terms in the ODE system~\eqref{eq:eom-ode-maggi} are generally state-dependent even if the external forcing in the original DAE system~\eqref{eq:full-first} is independent of the state. Again, this adds computational complexity in the solution of invariance equations.
\end{enumerate}
\end{sloppypar}

To overcome these issues, we propose to compute SSMs by solving the invariance equations~\eqref{eq:invariance-auto}, \eqref{eq:invariance} directly for the DAE system~\eqref{eq:full-first}. To apply SSM theory directly on the DAE system~\eqref{eq:full-first}, however, it is necessary to relate the spectrum of the equivalent ODE system~\eqref{eq:eom-ode-maggi} to that of the original DAE system~\eqref{eq:full-first}. The following statement provides this relationship.
\begin{theorem}
	\label{thm:spectrum_equivalence}
The spectrum of the linear part of the ODE system~\eqref{eq:eom-ode-maggi} can be divided into two groups. The first group has $2(n-n_\mathrm{c})$ eigenvalues that are exactly equal to the $2(n-n_\mathrm{c})$ eigenvalues with finite mangnitude of the linear part of the DAE system~\eqref{eq:eom-second-dae} or equivalently~\eqref{eq:full-first}. Furthermore, there exists a one-to-one correspondence between the eigenvectors in the first group of the ODE system~\eqref{eq:eom-ode-maggi} and that of the DAE system~\eqref{eq:eom-second-dae}. The second group has $n_\mathrm{c}$ eigenvalues that are all equal to $-\alpha$.
\end{theorem}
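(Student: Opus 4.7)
The plan is to exploit the direct-sum decomposition of the configuration space induced by the constraint Jacobian and its generalized inverse. By the construction used in Theorem~\ref{th:dae2ode} (see Appendix~\ref{sec:appendix-dae2ode}), the matrices $\boldsymbol{\Gamma}_0:=\boldsymbol{\Gamma}(\boldsymbol{0})$ and $\widecheck{\boldsymbol{\Gamma}}_0:=\widecheck{\boldsymbol{\Gamma}}(\boldsymbol{0})$ satisfy $\boldsymbol{G}_0\boldsymbol{\Gamma}_0=\boldsymbol{0}$ (so the columns of $\boldsymbol{\Gamma}_0$ span $\ker\boldsymbol{G}_0$) and $\boldsymbol{G}_0\widecheck{\boldsymbol{\Gamma}}_0=\boldsymbol{I}$, hence $\mathbb{R}^n=\mathrm{range}(\boldsymbol{\Gamma}_0)\oplus\mathrm{range}(\widecheck{\boldsymbol{\Gamma}}_0)$ and every $\boldsymbol{x}\in\mathbb{R}^n$ decomposes uniquely as $\boldsymbol{x}=\boldsymbol{\Gamma}_0\boldsymbol{q}+\widecheck{\boldsymbol{\Gamma}}_0\boldsymbol{r}$ with $\boldsymbol{q}\in\mathbb{R}^{n-n_\mathrm{c}}$ and $\boldsymbol{r}\in\mathbb{R}^{n_\mathrm{c}}$. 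I would use this splitting to diagonalize the linearized ODE into two independent blocks.

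First I would linearize~\eqref{eq:eom-ode-maggi} at the origin; all $\dot{\boldsymbol{\Gamma}}$ and $\dot{\widecheck{\boldsymbol{\Gamma}}}$ contributions in $\mathbf{F}$ are of quadratic or higher order and drop out. Substituting the decomposition of $\boldsymbol{x}$ into the first block row $\dot{\boldsymbol{x}}=-\alpha\widecheck{\boldsymbol{\Gamma}}_0\boldsymbol{G}_0\boldsymbol{x}+\boldsymbol{\Gamma}_0\boldsymbol{e}$ and invoking $\boldsymbol{G}_0\boldsymbol{\Gamma}_0=\boldsymbol{0}$, $\boldsymbol{G}_0\widecheck{\boldsymbol{\Gamma}}_0=\boldsymbol{I}$ together with the uniqueness of the splitting yields the decoupled relations $\dot{\boldsymbol{q}}=\boldsymbol{e}$ and $\dot{\boldsymbol{r}}=-\alpha\boldsymbol{r}$. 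Substituting these into the second block row eliminates $\boldsymbol{e}$ in favor of $\dot{\boldsymbol{q}}$ and produces, after the same simplifications, a reduced second-order equation for $\boldsymbol{q}$ driven by $\boldsymbol{r}$:
\begin{equation}
\boldsymbol{\Gamma}_0^\mathrm{T}\boldsymbol{M}\boldsymbol{\Gamma}_0\ddot{\boldsymbol{q}}+\boldsymbol{\Gamma}_0^\mathrm{T}\boldsymbol{C}\boldsymbol{\Gamma}_0\dot{\boldsymbol{q}}+\boldsymbol{\Gamma}_0^\mathrm{T}\boldsymbol{K}\boldsymbol{\Gamma}_0\boldsymbol{q}=\boldsymbol{\Gamma}_0^\mathrm{T}(\alpha\boldsymbol{C}-\boldsymbol{K}-\alpha^2\boldsymbol{M})\widecheck{\boldsymbol{\Gamma}}_0\boldsymbol{r}.\nonumber
\end{equation}
Since $\boldsymbol{r}$ evolves autonomously at rate $-\alpha$, it contributes $n_\mathrm{c}$ eigenvalues equal to $-\alpha$; the remaining $2(n-n_\mathrm{c})$ ODE eigenvalues come from the homogeneous quadratic eigenvalue problem (QEP) $\boldsymbol{\Gamma}_0^\mathrm{T}(\lambda^2\boldsymbol{M}+\lambda\boldsymbol{C}+\boldsymbol{K})\boldsymbol{\Gamma}_0\boldsymbol{w}=\boldsymbol{0}$.

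To close the proof I would show that the same QEP governs the finite spectrum of the DAE pencil $(\boldsymbol{A},\boldsymbol{B})$ in~\eqref{eq:eig-AB}. Writing a right eigenvector as $\boldsymbol{v}=(\boldsymbol{v}_x,\boldsymbol{v}_{\dot x},\boldsymbol{v}_\mu)$, the third block row forces $\boldsymbol{G}_0\boldsymbol{v}_x=\boldsymbol{0}$, so $\boldsymbol{v}_x=\boldsymbol{\Gamma}_0\boldsymbol{w}$; the second row gives $\boldsymbol{v}_{\dot x}=\lambda\boldsymbol{v}_x$; the first row becomes $(\lambda^2\boldsymbol{M}+\lambda\boldsymbol{C}+\boldsymbol{K})\boldsymbol{\Gamma}_0\boldsymbol{w}+\boldsymbol{G}_0^\mathrm{T}\boldsymbol{v}_\mu=\boldsymbol{0}$. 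Left-multiplication by $\boldsymbol{\Gamma}_0^\mathrm{T}$ annihilates the Lagrange-multiplier contribution via $\boldsymbol{\Gamma}_0^\mathrm{T}\boldsymbol{G}_0^\mathrm{T}=\boldsymbol{0}$ and reproduces exactly the same QEP, proving equality of the $2(n-n_\mathrm{c})$ finite eigenvalues. The one-to-one eigenvector correspondence then follows from the assignment $(\boldsymbol{\Gamma}_0\boldsymbol{w},\lambda\boldsymbol{w})\mapsto(\boldsymbol{\Gamma}_0\boldsymbol{w},\lambda\boldsymbol{\Gamma}_0\boldsymbol{w},\boldsymbol{v}_\mu)$, where $\boldsymbol{v}_\mu$ is uniquely recovered from $\boldsymbol{G}_0^\mathrm{T}\boldsymbol{v}_\mu=-(\lambda^2\boldsymbol{M}+\lambda\boldsymbol{C}+\boldsymbol{K})\boldsymbol{\Gamma}_0\boldsymbol{w}$ because $\boldsymbol{G}_0$ has full row rank and the right-hand side lies in $\mathrm{range}(\boldsymbol{G}_0^\mathrm{T})$ by the QEP.

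The main obstacle I anticipate is the careful algebraic bookkeeping in the second block row of the linearized ODE: its coefficient $\boldsymbol{\Gamma}_0^\mathrm{T}\boldsymbol{C}-\alpha\boldsymbol{\Gamma}_0^\mathrm{T}\boldsymbol{M}\widecheck{\boldsymbol{\Gamma}}_0\boldsymbol{G}_0$ interacts non-trivially with the first-row relation, and the identities $\boldsymbol{G}_0\boldsymbol{\Gamma}_0=\boldsymbol{0}$ and $\boldsymbol{G}_0\widecheck{\boldsymbol{\Gamma}}_0=\boldsymbol{I}$ must be invoked repeatedly to collapse the expressions cleanly into a block-triangular form. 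A secondary subtlety is verifying that the $n_\mathrm{c}$ eigenvalues $-\alpha$ introduced by the Baumgarte-type stabilization do not spuriously coincide with any finite DAE eigenvalue for generic $\alpha>0$, so that the two groups of eigenvalues in the statement remain genuinely disjoint.
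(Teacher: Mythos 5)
Your proposal is correct and follows essentially the same route as the paper: both arguments reduce the non-$(-\alpha)$ spectrum of the ODE system and the finite spectrum of the DAE system to the common projected quadratic eigenvalue problem $\boldsymbol{\Gamma}_0^\mathrm{T}(\lambda^2\boldsymbol{M}+\lambda\boldsymbol{C}+\boldsymbol{K})\boldsymbol{\Gamma}_0\boldsymbol{w}=\boldsymbol{0}$ and attribute the remaining $n_\mathrm{c}$ eigenvalues $-\alpha$ to the constraint directions, exactly as in Appendices~\ref{sec:spectrum-ode} and~\ref{sec:spectrum-dae} (Eqs.~\eqref{eq:group-a}--\eqref{eq:group-d}). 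Your block-triangular packaging via $\boldsymbol{x}=\boldsymbol{\Gamma}_0\boldsymbol{q}+\widecheck{\boldsymbol{\Gamma}}_0\boldsymbol{r}$ delivers the multiplicity $n_\mathrm{c}$ of $-\alpha$ a bit more directly than the paper's case split on $\lambda+\alpha$, and your reconstruction of $\boldsymbol{v}_\mu$ from $\boldsymbol{G}_0^\mathrm{T}\boldsymbol{v}_\mu=-(\lambda^2\boldsymbol{M}+\lambda\boldsymbol{C}+\boldsymbol{K})\boldsymbol{\Gamma}_0\boldsymbol{w}$ makes the eigenvector bijection explicit, but the substance is identical.
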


\begin{proof}
We provide the spectrum of the linear part of the ODE system~\eqref{eq:eom-ode-maggi} in Appendix~\ref{sec:spectrum-ode}. The two groups of eigenvalues and their eigenvectors are listed in~\eqref{eq:group-a} and~\eqref{eq:group-b}. The spectrum of the linear part of the DAE system~\eqref{eq:eom-second-dae} or equivalently~\eqref{eq:full-first} are listed in Appendix~\ref{sec:spectrum-dae}. By comparing~\eqref{eq:group-a} and~\eqref{eq:group-c}, we conclude the equivalence of the eigenvalues of the first group and the finite eigenvalues of the DAEs~\eqref{eq:eom-second-dae}. The one-to-one correspondence follows from the orthogonality of the constraint modes to the configuration space (see Remark~\ref{rem:orth_cons_modes}). The explicit expressions for relationship between the two sets of eigenvectors are given in Eqs.~\eqref{eq:group-a} and~\eqref{eq:group-c}.
\end{proof}

Theorem~\ref{thm:spectrum_equivalence} establishes a one-to-one correspondence of the $2(n-n_c)$ finite-magnitude eigenvalues and eigenvectors of the DAE system~\eqref{eq:full-first} to those of the equivalent ODE system~\eqref{eq:eom-ode-maggi}. Since the real parts of all finite-magnitude eigenvalues of system~\eqref{eq:full-first} are assumed to be negative, it follows from Theorem~\ref{thm:spectrum_equivalence} that the origin is a stable fixed point of system~\eqref{eq:eom-ode-maggi}. 
Furthermore, as the stabilization parameter $\alpha\in\mathbb{R}^+$ can be chosen arbitrarily, the remaining $n_c$ eigenvalues of the ODE system~\eqref{eq:eom-ode-maggi}, which are all equal to $-\alpha$, can be made to satisfy the non-resonance conditions in Theorems~\ref{th:autoSSM-existence-uniqueness} and \ref{th:SSM-existence-uniqueness}. 

As such, the eigenvectors corresponding to these $n_c$ spurious eigenvalues are an artifact of the index reduction of DAEs~\eqref{eq:eom-second-dae} and are not relevant for model reduction via SSMs. Thus, in analogy with Remark~\ref{rem:orth_cons_modes}, these modes are not included in the master subspace of the relevant SSM. We note that for the purpose of verification of non-resonance conditions given in Theorems~\ref{th:autoSSM-existence-uniqueness} and \ref{th:SSM-existence-uniqueness} for the DAE~system \eqref{eq:full-first}, only the $2(n-n_c)$ eigenvalues outside the constraint modes are relevant (cf. Remark~\ref{rem:orth_cons_modes}). Hence, without any loss of generality, for the DAE system, we define the spectrum as $\spect(\boldsymbol{\Lambda})=\{\lambda_1,\cdots,\lambda_{2(n-n_\mathrm{c})}\}$. 

To summarize, Theorems~\ref{th:dae2ode} and~\eqref{thm:spectrum_equivalence} together provide us a constructive tool to apply SSM theory and computation to systems with configuration constraints.

\section{Computation of SSMs and their reduced dynamics}
\label{sec:ssmtool}

\subsection{SSMTool}
The DAE system~\eqref{eq:full-first} is of the form of the equation of motion for which SSMTool~\cite{SHOBHIT,part-i,part-ii,ssmtool21} computes SSMs and their reduced dynamics. Next we briefly review the main features of SSMTool. 

\begin{sloppypar}
SSMTool performs SSM computations in physical coordinates using only the master modes associated with the SSMs~\cite{SHOBHIT}. This makes SSM computations applicable even to high-dimensional finite-element models. The algorithm supports the computation of parameterized SSMs along with their reduced dynamics up to an arbitrary order of accuracy in an automated fashion. Specifically, for autonomous systems ($\epsilon=0)$, it returns the Taylor expansions of the SSM parametrization $\boldsymbol{W}(\boldsymbol{p})$ and its reduced dynamics $\boldsymbol{R}(\boldsymbol{p})$, defined in Theorem~\ref{th:autoSSM-existence-uniqueness}, by solving the invariance equation~\eqref{eq:invariance-auto}.

For non-autonomous systems with periodic forcing ($\epsilon>0)$, SSMTool returns time-periodic SSM parametrization~\eqref{eq:map-nonauto} and its reduced dynamics~\eqref{eq:red-dyn} by solving the invariance equation~\eqref{eq:invariance}. Specifically, it computes the terms in the expansions
\begin{align}
\boldsymbol{z}&=\boldsymbol{W}_\epsilon(\boldsymbol{p},\phi)=\boldsymbol{W}(\boldsymbol{p})+\epsilon\boldsymbol{X}_{{0}}(\phi)+\mathcal{O}(\epsilon|\boldsymbol{p}|),\label{eq:ssm-nonauto}\\
\dot{\boldsymbol{p}}&=\boldsymbol{R}_{\epsilon}(\boldsymbol{p},\phi)=\boldsymbol{R}(\boldsymbol{p})+\epsilon \boldsymbol{S}_{{0}}(\phi)+\mathcal{O}(\epsilon|\boldsymbol{p}|), \label{eq:red-nonauto}
\end{align}
where $\boldsymbol{X}_{{0}}$ is the leading-order, non-autonomous part of the SSM and $\boldsymbol{S}_{{0}}$ is the leading-order periodic forcing term in the reduced dynamics. We use a normal-form-style parametrization to compute the SSM and its reduced dynamics. This allows us to factor out the $\phi$-dependent terms in~\eqref{eq:red-nonauto} after appropriate coordinate transformations, and simplifies the forced response curve (FRC) computation (see~\cite{part-i} for more details). Specifically, the periodic response of the full nonlinear system is simply obtained by computing the fixed points of the corresponding SSM-based ROM~\cite{SHOBHIT,part-i}. Similarly, quasiperiodic tori in the full system can be directly obtained by computing periodic orbits in the SSM-based ROM~\cite{part-ii}.
\end{sloppypar}

Furthermore, these SSM-based ROMs enable efficient bifurcation analysis of high-dimensional systems. For instance, one can predict the existence of limit cycles and quasi-periodic orbits of full systems via the Hopf bifurcation of fixed points and limit cycles of the reduced dynamics on the SSMs. To this end, SSMTool has been integrated with the numerical continuation package \textsc{coco}~\cite{dankowicz2013recipes,COCO,ahsan2022methods} to facilitate nonlinear analysis of SSM-based ROMs (see~\cite{part-i,part-ii} for the details).

\subsection{Treatment of non-polynomial nonlinearities}
\label{sec:nonpoly-to-poly}
In constrained mechanical systems, nonlinearities with trigonometric terms commonly arise. The current implementation of SSMTool, however, supports only polynomial nonlinearities. Nonetheless, by introducing auxiliary variables and constraints, systems with trigonometric terms can be recast into DAEs with polynomial terms~\cite{guillot2019generic}. A table of recasts of the most common transcendental functions is given in the Appendix of~\cite{guillot2019generic}. This conversion technique can be applied to systems with common non-polynomial nonlinearities to obtain DAEs with polynomial nonlinearities.

As an illustration, we consider here the example of a forced pendulum to show how its equation of motion, which contains a sinusoidal nonlinearity, can be recast into a DAE system with polynomial nonlinearities. The equation of motion is given by
\begin{equation}
\label{eq:pend-ode}
    \ddot{\varphi}+c\dot{\varphi}+\sin\varphi=\epsilon\cos\Omega t,
\end{equation}
where $\varphi$ is the rotation angle of the pendulum and $c>0$ is a damping coefficient. We define $z_1=\varphi$, $z_2=\dot{\varphi}$ and introduce auxiliary variables $z_3=\sin\varphi$ and $z_4=\cos\varphi-1$. Then \eqref{eq:pend-ode} can be expressed as
\begin{gather}
    \dot{z}_1=z_2,\nonumber\\
    \dot{z}_2=-cz_2-z_3+\epsilon\cos\Omega t,\nonumber\\
    \dot{z}_3=(1+z_4)z_2,\nonumber\\
    z_3^2+(1+z_4)^2=1\label{eq:pend-dae},
\end{gather}
where the last two sub-equations above relate the auxiliary variables, $z_3$ and $z_4$, to the pendulum's dynamics governed by the state variable $\varphi$. Indeed, the reformulated DAE system~\eqref{eq:pend-dae} contains only polynomial terms in the variables $(z_1, z_2, z_3, z_4)$. Such a reformulation is not unique: the auxiliary variable $z_4=\cos\varphi$, suggested in~\cite{guillot2019generic}, would also work. Here, we choose $z_4=\cos\varphi-1$ instead to ensure that the origin remains a fixed point of the reformulated system.

The $z_3$-equation in system~\eqref{eq:pend-dae} results in a zero eigenvalue for the linear part of the DAE system~\eqref{eq:pend-dae}. Furthermore, the algebraic equation in the DAE system~\eqref{eq:pend-dae} introduces an eigenvalue with infinite magnitude due to the singularity of the coefficient matrix $\boldsymbol{B}$ in the reformulated setting, which is typical for mechanical systems with configuration constraints (see Section~\ref{sec:system-setup}). The two modes corresponding to the spurious zero and infinite eigenvalues are an artifact of the reformulation and should not be included in the master subspace for SSM-based reduction, as we discussed in Remark~\ref{rem:orth_cons_modes}.

\begin{sloppypar}
In summary, by introducing auxiliary variables and additional constraint equations, systems with non-polynomial nonlinearities can be converted into systems with purely polynomial nonlinearities, to which SSMTool can be directly applied to compute SSMs and their reduced dynamics. Without this conversion, one would have to approximate general nonlinerities locally via their Taylor expansions. This would render SSMs and their reduced dynamics over substantially smaller domains of the phase space.
\end{sloppypar}


\section{Examples}
\label{sec:examples}
\subsection{A spatial oscillator with a path constraint}

Consider a spatial oscillator shown in Fig.~\ref{fig:3Doscillator}. The equations of motion are given by~\cite{buza2021using}
\begin{align}
&\ddot{x}_1+2\zeta_1\omega_1\dot{x}_1+\omega_1^2x_1+\frac{\omega_1^2}{2}(3x_1^2+x_2^2 + x_3^2)\nonumber\\
&\quad +\omega_2^2x_1x_2+\omega_3^2x_1x_3\nonumber\\
&\quad +\frac{\omega_1^2+\omega_2^2 + \omega_3^2}{2}x_1(x_1^2+x_2^2+x_3^2)=\epsilon f_1\cos\Omega t,\nonumber\\
& \ddot{x}_2+2\zeta_2\omega_2\dot{x}_2+\omega_2^2x_2+\frac{\omega_2^2}{2}(3x_2^2+x_1^2 + x_3^2)\nonumber\\
&\quad +\omega_1^2x_1x_2+ \omega_3^2x_2 x_3 \nonumber\\
&\quad+ \frac{\omega_1^2+\omega_2^2 + \omega_3^2}{2}x_2(x_1^2+x_2^2+ x_3^2)=0,\nonumber\\
& \ddot{x}_3+2\zeta_3\omega_3\dot{x}_3+\omega_3^2x_3+\frac{\omega_3^2}{2}(3x_3^2+x_1^2+x_2^2)\nonumber\\
&\quad+\omega_1^2x_1x_3+\omega_2^2x_2x_3\nonumber\\
&\quad+\frac{\omega_1^2+\omega_2^2+\omega_3^2}{2}x_3(x_1^2+x_2^2+ x_3^2)=0.\label{eq:eom-3Doscillator}
\end{align}
We impose a configuration constraint $g(x_1,x_2,x_3)=0$ so that the oscillator can move only on the surface defined by the constraint. We consider the following two types of constraints:
\begin{align}
    &\text{cubic}: g(x_1,x_2,x_3)=x_3-x_1^3-x_2^3,\label{eq:cubic}\\
    &\text{spherical}: g(x_1,x_2,x_3)=x_1^2+x_2^2+(x_3-1)^2-1\label{eq:sphere}.
\end{align}
Each of these constraint functions satisfies our initial assumptions, namely, $g(0,0,0)=0$ and $G_0$ is of full rank. When either of these configuration constraints is added, the left-hand side of~\eqref{eq:eom-3Doscillator} is modified by the addition of $\mu\nabla g$ (see Eq.~\eqref{eq:eom-second-dae}).

\begin{figure}[!ht]
\centering
\includegraphics[width=.2\textwidth]{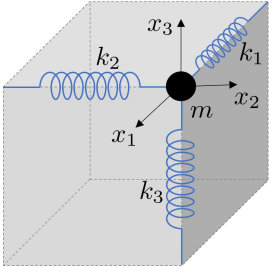}
\caption{\small An oscillator in three-dimensional space (see Eq.~\eqref{eq:eom-3Doscillator}) with possible configuration constraints imposed in Eqs.~\eqref{eq:cubic} and~\eqref{eq:sphere}.}
\label{fig:3Doscillator}
\end{figure}

In the following computations, we set $\zeta_1=0.01$, $\zeta_2=\zeta_3=0.05$, $\omega_1=2$, $\omega_2=3$, $\omega_3=5$ and $f_1=1$. The three pairs of complex conjugate eigenvalues for the linear part of the system~\eqref{eq:eom-3Doscillator} without configuration constraints are
\begin{gather}
    \lambda_{1,2}=-0.02\pm1.9999\mathrm{i},\nonumber\\ \lambda_{3,4}=-0.15\pm2.9962\mathrm{i},\nonumber\\ \lambda_{5,6}=-0.25\pm4.9937\mathrm{i}.\label{eq:3Dos-spectrum}
\end{gather}
When a configuration constraint is added, the effective number of degrees of freedom is reduced by one. Thus, based on our discussion in section~\ref{sec:ext_cons}, we obtain only two finite-magnitude natural frequencies and three eigenvalues with infinite magnitude corresponding to the constraint modes of the governing DAE system. For both configuration constraints in~\eqref{eq:cubic}-\eqref{eq:sphere}, the eigenvalues of the DAE system are given as
\begin{gather}
    \lambda_{1,2}=-0.02\pm1.9999\mathrm{i},\nonumber\\ \lambda_{3,4}=-0.15\pm2.9962\mathrm{i},\quad |\lambda_{5,6,7}|=\infty.\label{eq:3Dos-spectrum-dae}
\end{gather}

Comparing the eigenvalues~\eqref{eq:3Dos-spectrum} of the unconstrained system with the eigenvalues~\eqref{eq:3Dos-spectrum-dae} of the constrained DAE system, we note that the first two pairs of eigenvalues are common (see Theorem~\ref{thm:spectrum_equivalence}). Thus, the mode corresponding to the third pair of eigenvalues in~\eqref{eq:3Dos-spectrum} is constrained by the configuration constraint (cubic or spherical). Indeed, the linear parts of constraint equations~\eqref{eq:cubic} and \eqref{eq:sphere} depend only on the modal coordinate~$x_3$. 

Next, we calculate the SSM tangent to the spectral subspace corresponding to the first pair of eigenvalues in~\eqref{eq:3Dos-spectrum} and~\eqref{eq:3Dos-spectrum-dae}, along with the reduced dynamics on the SSM. Thus, our SSM-based ROM is two-dimensional, independently of the type of configuration constraint (cubic or spherical) and of the dimension of the full phase space.

\subsubsection{Autonomous dynamics}
\begin{sloppypar}
First, we analyze the unforced ($\epsilon=0$) limit of system~\eqref{eq:eom-3Doscillator} in the absence of any configuration constraints and then in the presence of cubic and spherical constraints shown in~\eqref{eq:cubic} and \eqref{eq:sphere}. We compute the two-dimensional, slow SSM associated to $ \lambda_{1,2} $ in \eqref{eq:3Dos-spectrum} and \eqref{eq:3Dos-spectrum-dae} using SSMTool. 



Transforming the parametrization coordinates $\boldsymbol{p}\in\mathbb{C}^2$ into polar coordinates $(\rho,\vartheta)$ such that $\boldsymbol{p}=(\rho e^{\mathrm{i}\vartheta},\rho e^{-\mathrm{i}\vartheta})$, we obtain two-dimensional ROMs up to $\mathcal{O}(13)$ for the unconstrained and constrained variants of system~\eqref{eq:eom-3Doscillator} in the following form:
\end{sloppypar}
\begin{itemize}
    \item unconstrained
    \begin{align}
        \dot{\rho} & =- 0.02\rho- 0.2387\rho^3+ 1.08\rho^5- 4.408\rho^7\nonumber\\
        &\quad+ 27.75\rho^9- 71.08\rho^{11}+50.58\rho^{13},\nonumber\\
    \dot{\vartheta} &= \omega(\rho) = 2.0- 1.206\rho^2- 0.3417\rho^4- 4.035\rho^6\nonumber\\
    &\quad - 23.49\rho^8+ 121.5\rho^{10}- 1370\rho^{12}.\label{eq:red-free}
    \end{align}
    \item constrained (cubic)
    \begin{align}
        \dot{\rho}&= -0.02\rho-0.02188{\rho}^3+0.02972{\rho_1 }^5-1.029\,{\rho}^7\nonumber\\
        &\quad+5.913{\rho}^9-27.97\,{\rho}^{11}+214.2{\rho}^{13},\nonumber\\
        \dot{\vartheta} & = \omega(\rho) =2.0+0.8168{\rho}^2-8.958{\rho}^4+3.485{\rho}^6\nonumber\\
        &\quad-66.98{\rho}^8-7.963{\rho}^{10}-882.8{\rho}^{12}.\label{eq:red-cubic}
    \end{align}
    \item constrained (spherical)
    \begin{align}
        \dot{\rho}& = -0.02\rho-0.05085{\rho}^3+0.2779{\rho}^5-1.945{\rho}^7\nonumber\\
        &\quad+5.725{\rho}^9+26.99{\rho}^{11}+1068.0{\rho}^{13} ,\nonumber\\
        \dot{\vartheta} &=\omega(\rho)= 2.0+4.421{\rho}^2-3.666{\rho}^4-88.02{\rho}^6\nonumber\\
        &\quad +1341.0{\rho}^8-12060.0{\rho}^{10}+55620.0{\rho}^{12}.\label{eq:red-sphere}
    \end{align}
\end{itemize}
The second sub-equation in the ROMs~\eqref{eq:red-free},\eqref{eq:red-cubic} and \eqref{eq:red-sphere} determines an instantaneous frequency of oscillation $\omega$ as a function of the polar amplitude $\rho$, defining the damped backbone curve in polar coordinates. Figure~\ref{fig:3Dos-rho-omega} shows these backbone curves for the three ROMs~\eqref{eq:red-free},\eqref{eq:red-cubic} and \eqref{eq:red-sphere} for increasing order of SSM approximation upto $\mathcal{O}(13)$. 

We observe that higher-order expansions are useful to obtain convergence in backbones at higher amplitudes. For instance, in the left panel of Fig.~\ref{fig:3Dos-rho-omega}, the backbone curve is well converged at $\mathcal{O}(3)$ expansion for amplitude $\rho\leq0.2$, at $\mathcal{O}(7)$ for amplitude $\rho\leq0.3$ and at $\mathcal{O}(13)$ for amplitude $\rho\leq0.35$.

\begin{figure*}[!ht]
\centering
\includegraphics[width=.32\textwidth]{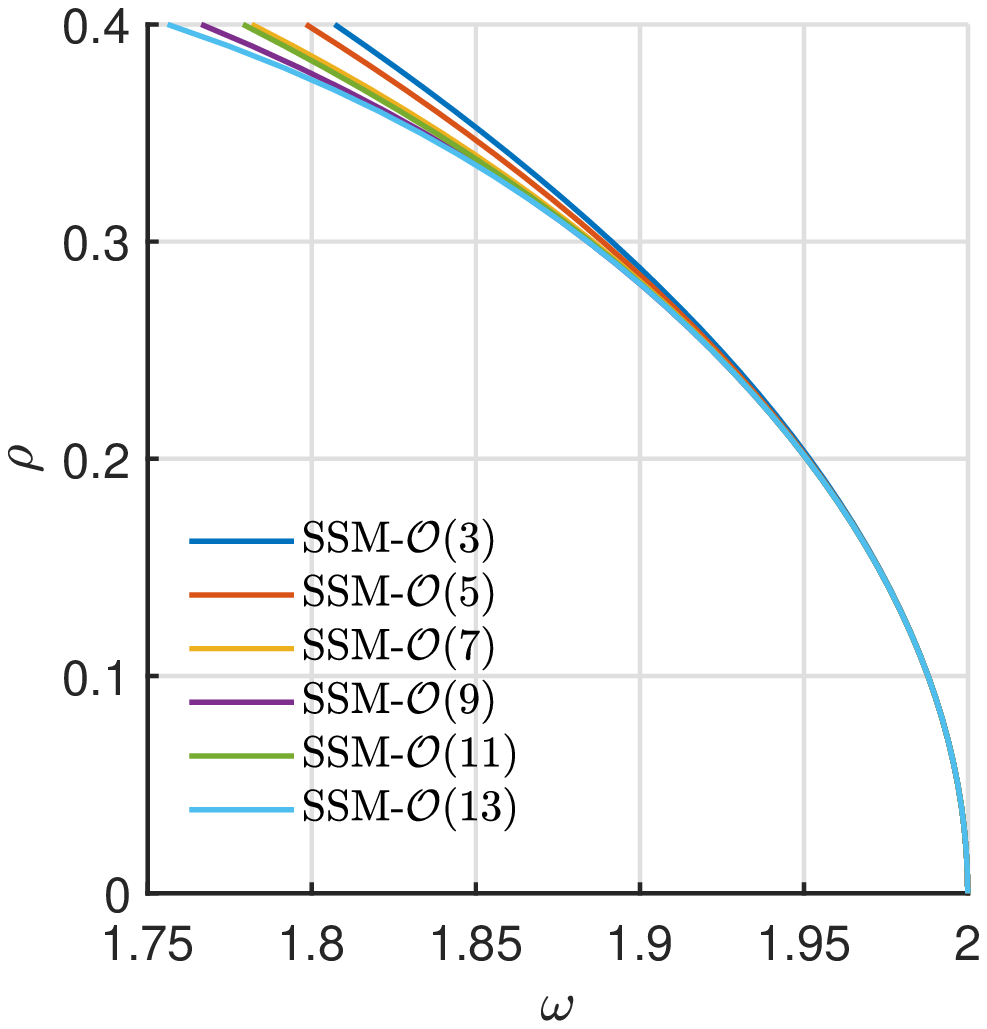}
\includegraphics[width=.32\textwidth]{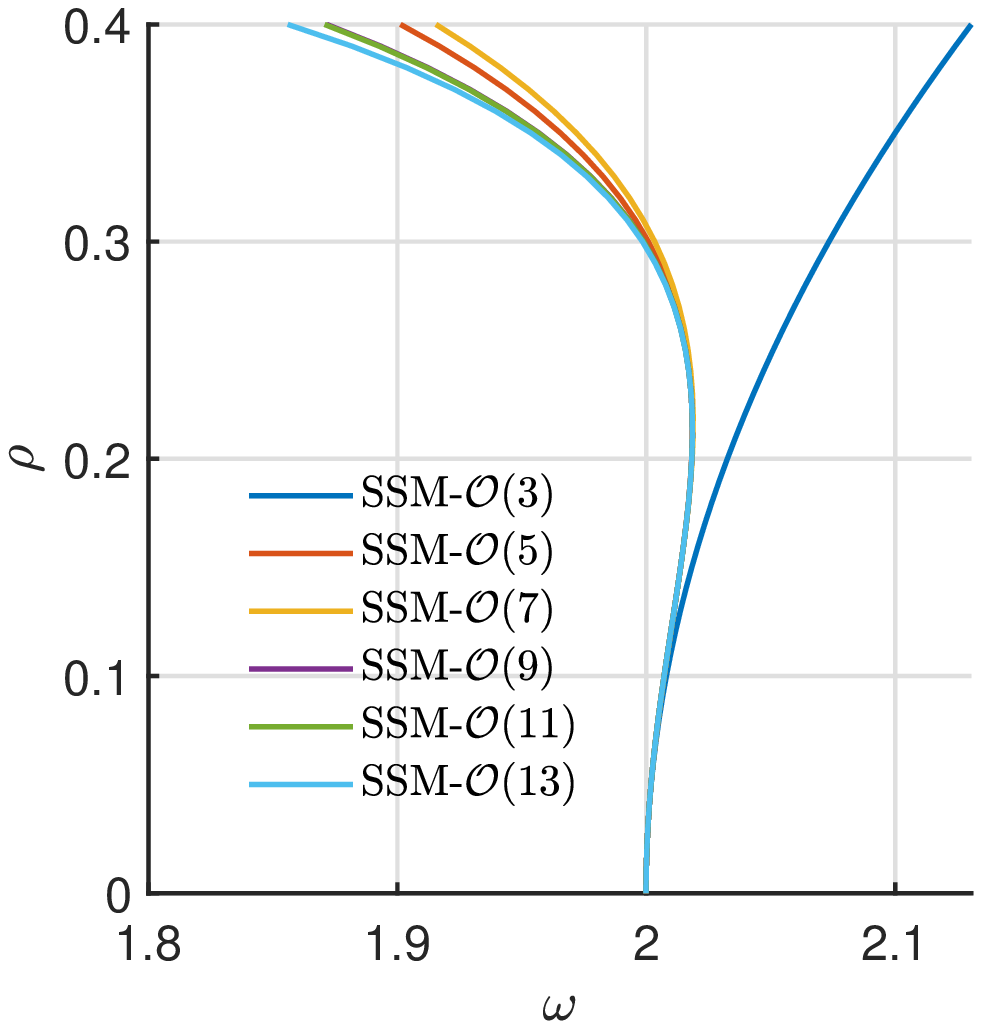}
\includegraphics[width=.32\textwidth]{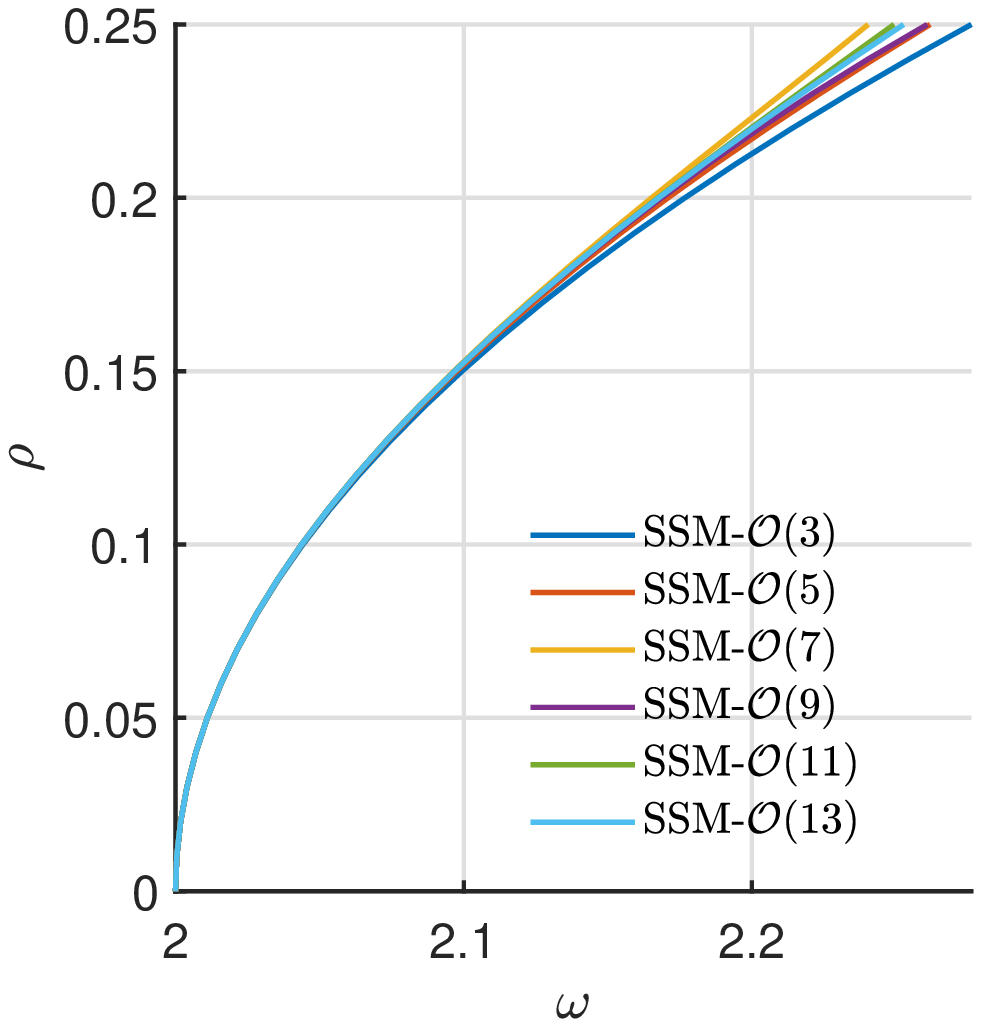}
\caption{\small Backbone curves in polar (reduced) coordinates under increasing orders of approximation for the slowest two-dimensional SSM of the oscillator system~\eqref{eq:eom-3Doscillator} in the absence of any configuration constraints (left), in the presence of the cubic configuration constraint~\eqref{eq:cubic} (middle), and in the presence of the sphere configuration constraint~\eqref{eq:sphere} (right).}
\label{fig:3Dos-rho-omega}
\end{figure*}

The presence of constraints have a remarkable effect on the nature of backbones. Indeed, as the amplitude $\rho$ increases, the backbone exhibits softening behavior for the unconstrained system (see Fig.~\ref{fig:3Dos-rho-omega}(left)), hardening followed by softening for the system with cubic configuration constraint (see Fig.~\ref{fig:3Dos-rho-omega}(middle)), and hardening behavior for the system with spherical configuration constraint (see Fig.~\ref{fig:3Dos-rho-omega}(right)).

These nonlinear hardening and softening effects on the shape of the backbone curves can also be observed in the physical coordinates of the full system. In Fig.~\ref{fig:3Dos-x1-omega}, we plot the damped backbone curves in the $(x_1,\omega)$ coordinates obtained via SSM reduction in the absence of any configuration constraints, in the presence of the cubic configuration constraint~\eqref{eq:cubic}, and in the presence of the spherical configuration constraint~\eqref{eq:sphere}. To validate the backbone curves obtained by the SSM reduction, we compute the conservative backbone curves of the original systems, to which the damped backbone curves converge in the small damping limit~\cite{breunung2018explicit}. 
\begin{figure*}[!ht]
	\centering
	\includegraphics[width=.32\textwidth]{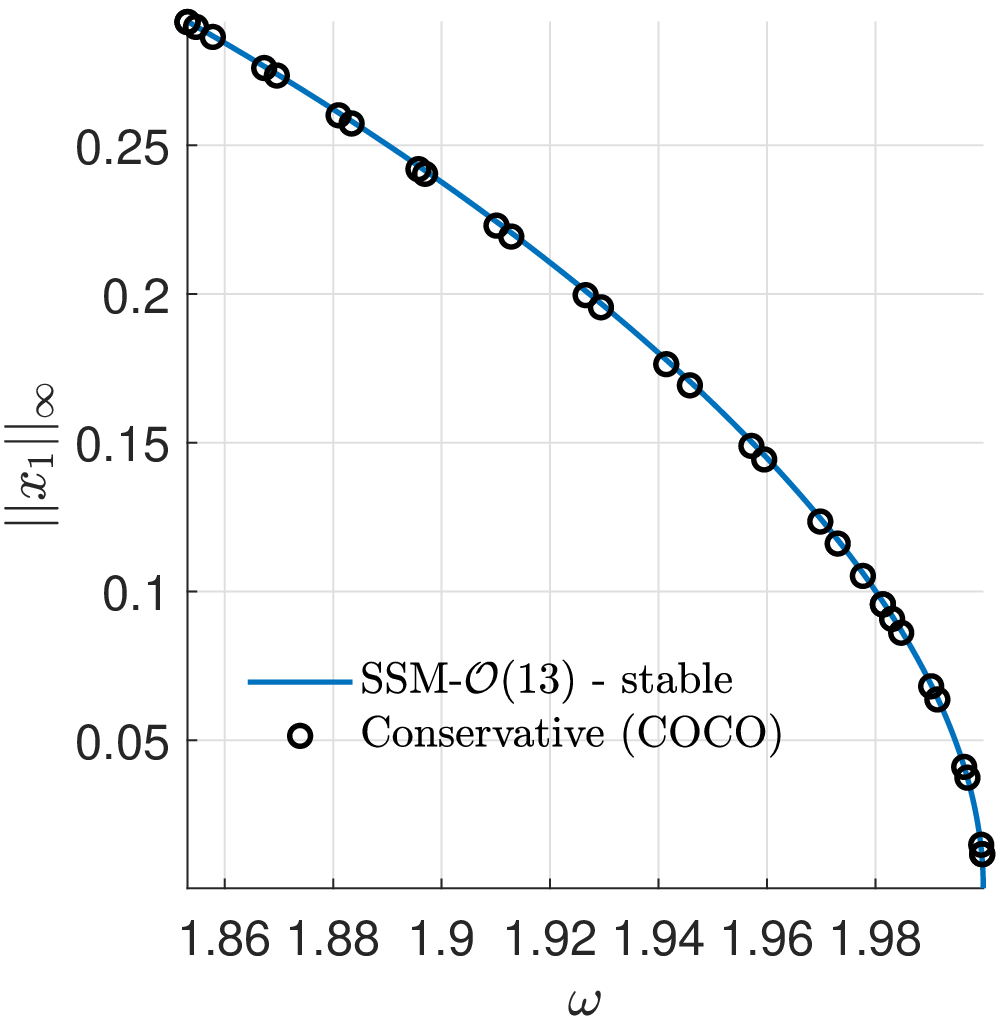}
	\includegraphics[width=.32\textwidth]{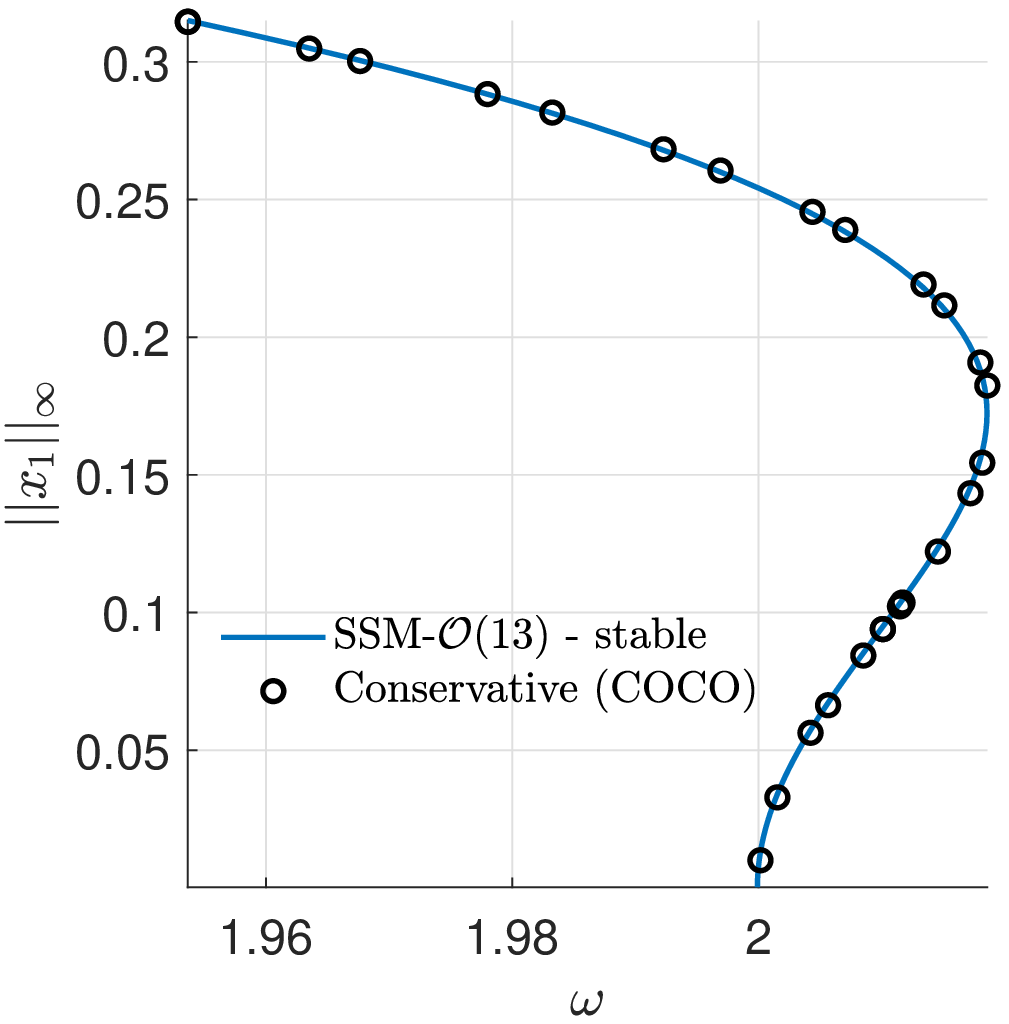}
	\includegraphics[width=.32\textwidth]{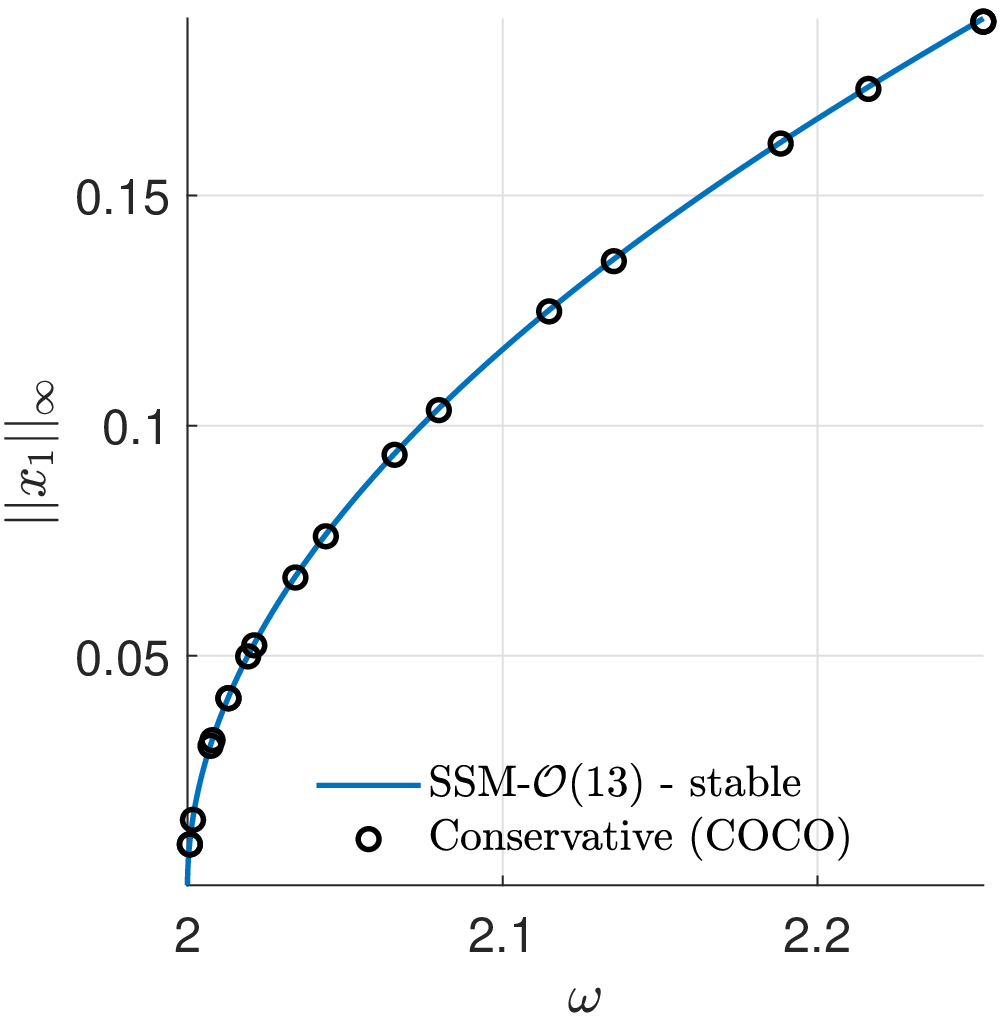}
	\caption{\small Damped and conservative backbone curves depicting the instantaneous and periodic vibration amplitude of the $x_1$ degree of freedom for the oscillator system~\eqref{eq:eom-3Doscillator} in the absence of any configuration constraints (left), in the presence of the cubic configuration constraint~\eqref{eq:cubic} (middle), and in the presence of the spherical configuration constraint~\eqref{eq:sphere} (right). The backbone curves at the conservative limits of the full systems are obtained with the \texttt{po}-toolbox of \textsc{coco}, while the damped backbones are obtained via SSM-based ROMs~\eqref{eq:red-free}-\eqref{eq:red-sphere}.}
	\label{fig:3Dos-x1-omega}
\end{figure*}

Most available tools for the nonlinear analysis of dynamical systems are restricted to ODEs. Indeed, the direct computation of periodic orbits of constrained mechanical systems in the form of DAEs is still an emerging field~\cite{han2020simulation,ju2021efficient}. To obtain a reference periodic solution for validation purposes, however, we simply convert the constrained DAE system into an equivalent ODE system (see Theorem~\ref{th:dae2ode}) and perform numerical continuation on the equivalent ODE system. In this work, instead of using the ODE system~\eqref{eq:eom-ode-maggi}, we employ an index-1 formulation with stabilization~\cite{laulusa2008review,bauchau2008review} to obtain the equivalent ODE system~\eqref{eq:eom-2nd-nolambda} as this alternative is simpler to implement. We refer to Appendix~\ref{sec:index-1} for further details about this index-1 formulation.
\begin{sloppypar}
Since the chosen damping ratios are small, we expect the damped backbone curves to match closely the conservative backbone curves. We compute the conservative backbone curve of the full system in its undamped limit by parameter continuation of periodic orbits using the \texttt{po}-toolbox of \textsc{coco}~\cite{COCO}. Indeed, the damped backbone curves obtained from SSM-based ROMs~\eqref{eq:red-free}-\eqref{eq:red-sphere} agree with the conservative backbone curves of the full systems, as shown in Fig.~\ref{fig:3Dos-x1-omega}.
\end{sloppypar}

\begin{figure*}[!ht]
\centering
\includegraphics[width=.32\textwidth]{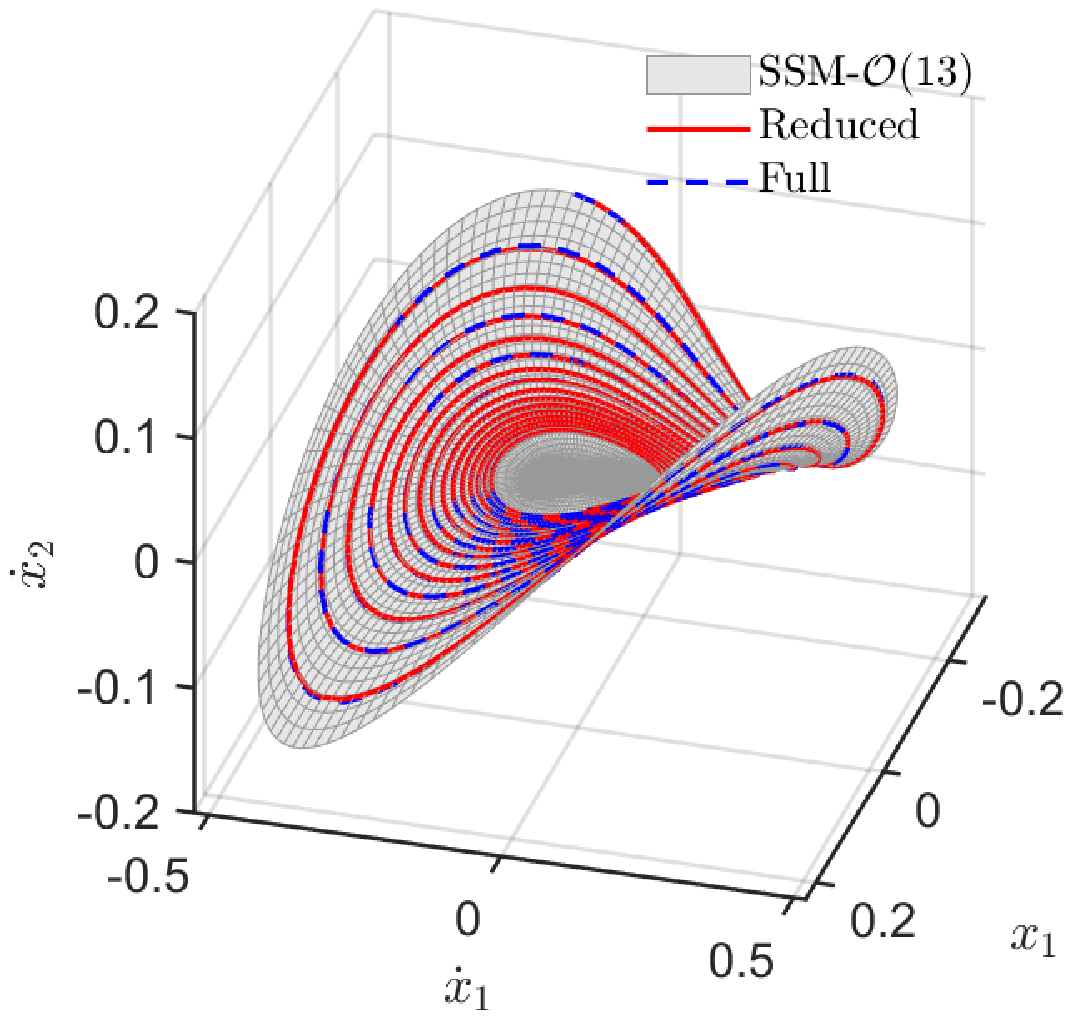}
\includegraphics[width=.32\textwidth]{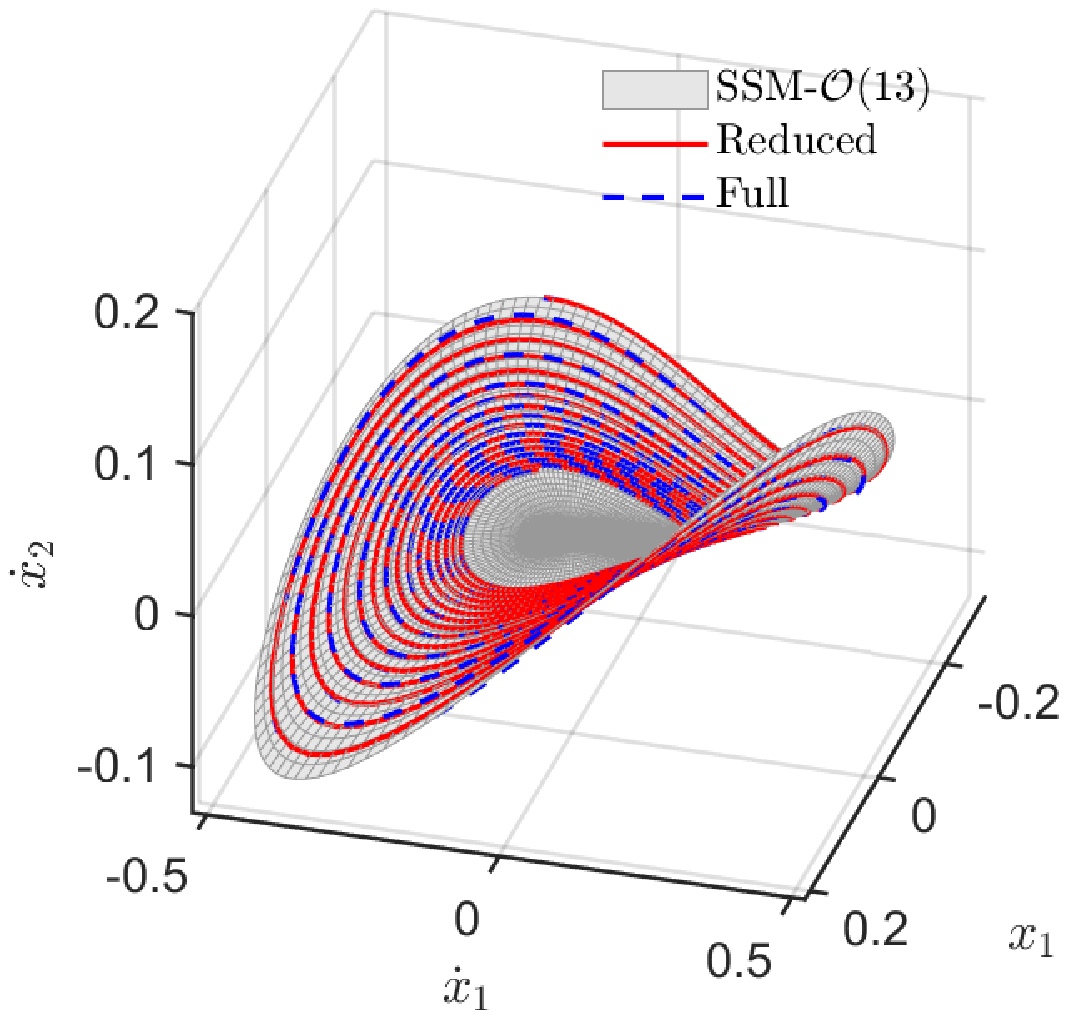}
\includegraphics[width=.32\textwidth]{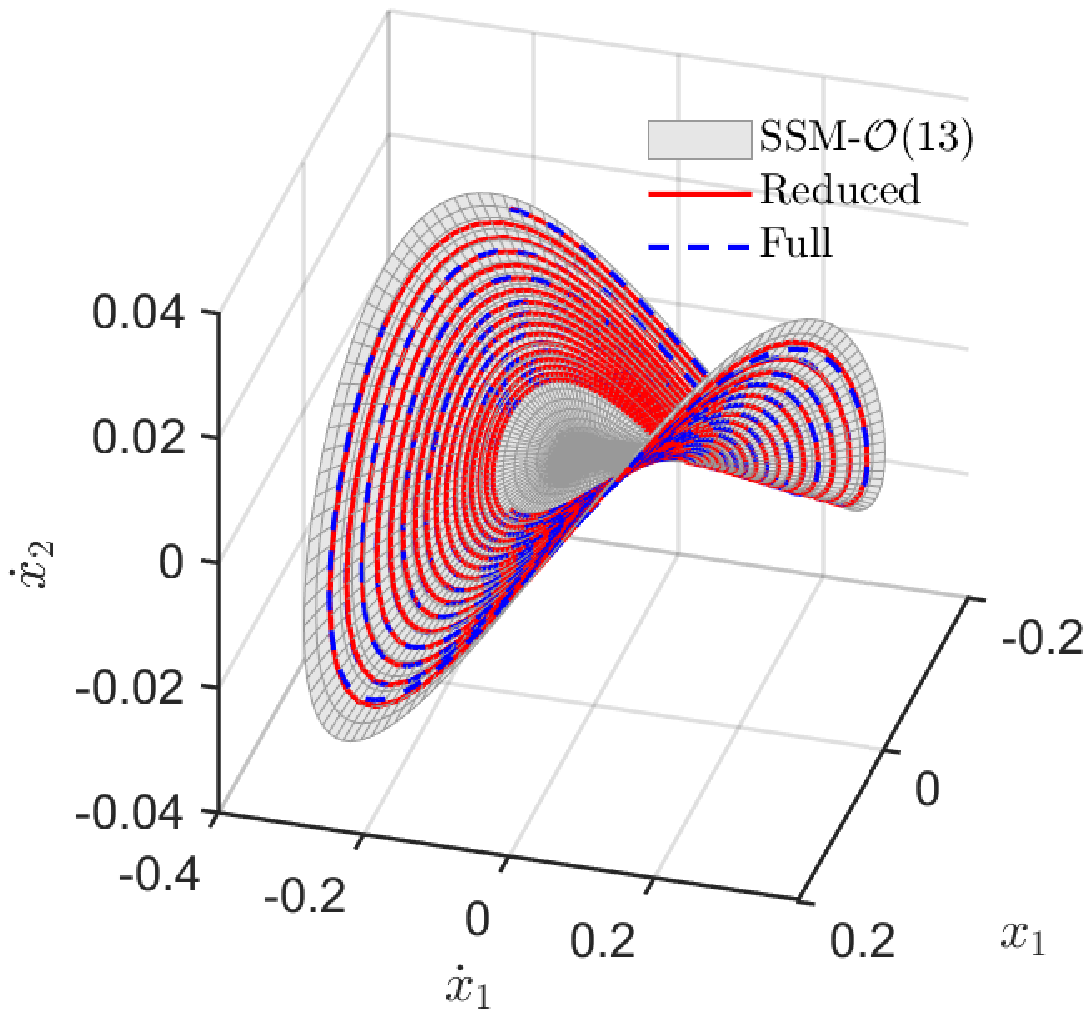}
\caption{\small Projection of SSM approximated upto $\mathcal{O}(13)$ for the oscillator system~\eqref{eq:eom-3Doscillator} in the absence of any configuration constraints (left), in the presence of the cubic configuration constraint~\eqref{eq:cubic} (middle), and in the presence of the spherical configuration constraint~\eqref{eq:sphere} (right). The red solid trajectory in each panel is obtain via simulation of SSM-based ROMs~\eqref{eq:red-free}-\eqref{eq:red-sphere}, while the dashed blue trajectory is obtained via full system simulation with the same initial condition as that for the ROM counterpart. For trajectory simulations, we choose initial conditions on the SSM as $(\rho_0,\vartheta_0)=(0.35,0.5)$ for the left and middle panels and $(\rho_0,\vartheta_0)=(0.24,0.5)$ for the right panel.}
\label{fig:3Dos-ssm}
\end{figure*}

We also provide an alternative validation of our SSM-based reduction by checking the invariance of the SSM. For the SSM computed up to a given order, we take an initial condition $\boldsymbol{p}_0=(\rho_0 e^{\mathrm{i}\vartheta_0},\rho_0e^{-\mathrm{i}\vartheta_0})$ on the SSM, and perform forward time integration both of the ROM and of the full system using the same initial condition. As the SSM is an invariant manifold, the trajectory obtained by simulating the full system must coincide with the reduced trajectory obtained by simulating the SSM-based ROM. We plot the SSM projection on to the coordinates $(x_1,\dot{x}_1,\dot{x}_2)$ for the system without configuration constraints, with the cubic configuration constraint, and with the spherical configuration constraint in Fig.~\ref{fig:3Dos-ssm}. Indeed, we observe from Fig.~\ref{fig:3Dos-ssm} that for an $\mathcal{O}(13)$ SSM computation, the simulated trajectory of the full system stays invariant on the computed SSM and matches well with the simulated trajectory of SSM-based ROM.

\subsubsection{Forced response curve}
We now introduce a small-amplitude external harmonic excitation by setting $0<\epsilon\ll1$ in the governing equations~\eqref{eq:eom-3Doscillator}. We are interested in computing the forced response curve (FRC) of the system near the first natural frequency, i.e., for $\Omega\approx\omega_1$.


\begin{figure*}[!ht]
\centering
\includegraphics[width=.45\textwidth]{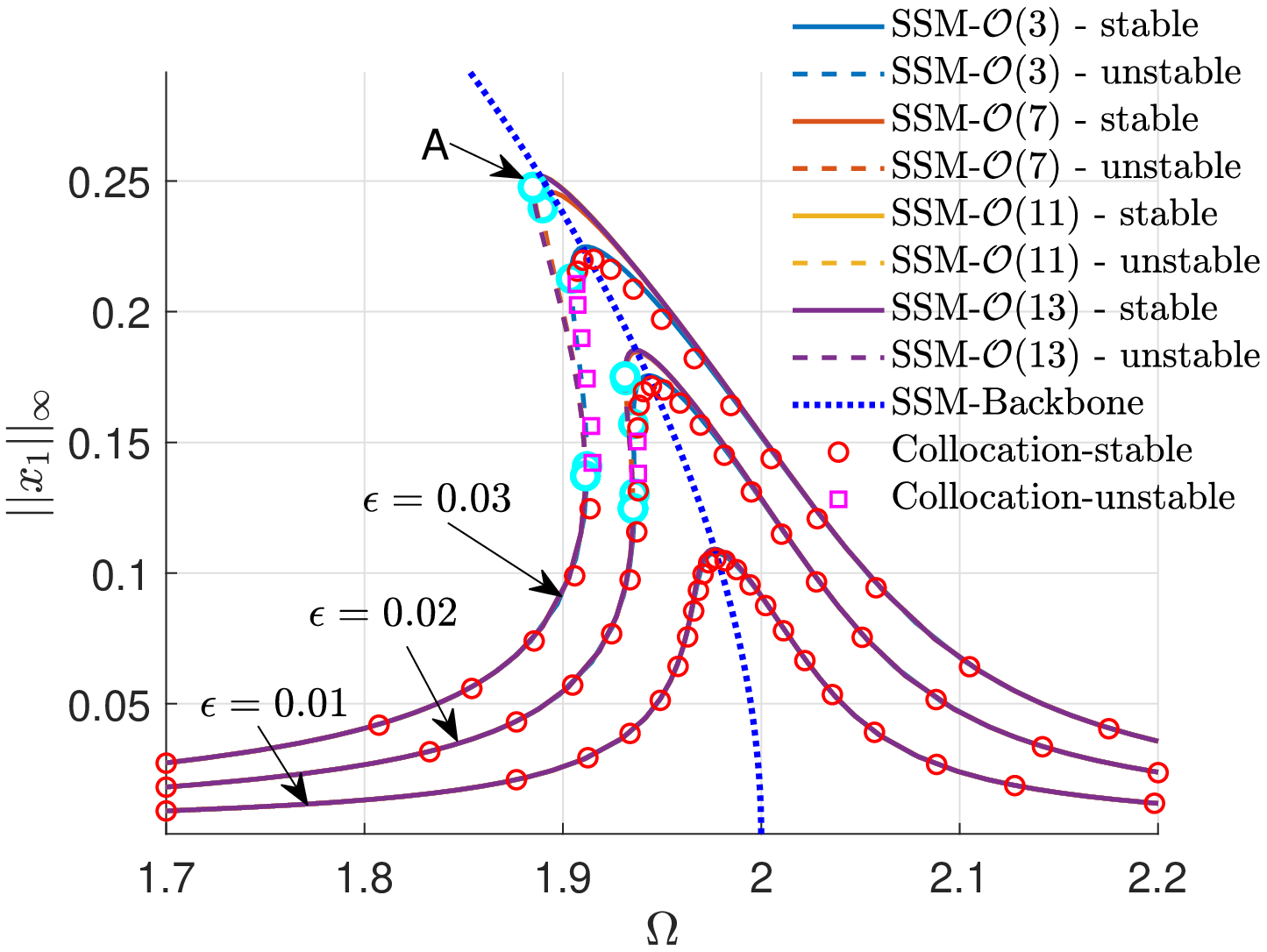}
\includegraphics[width=.45\textwidth]{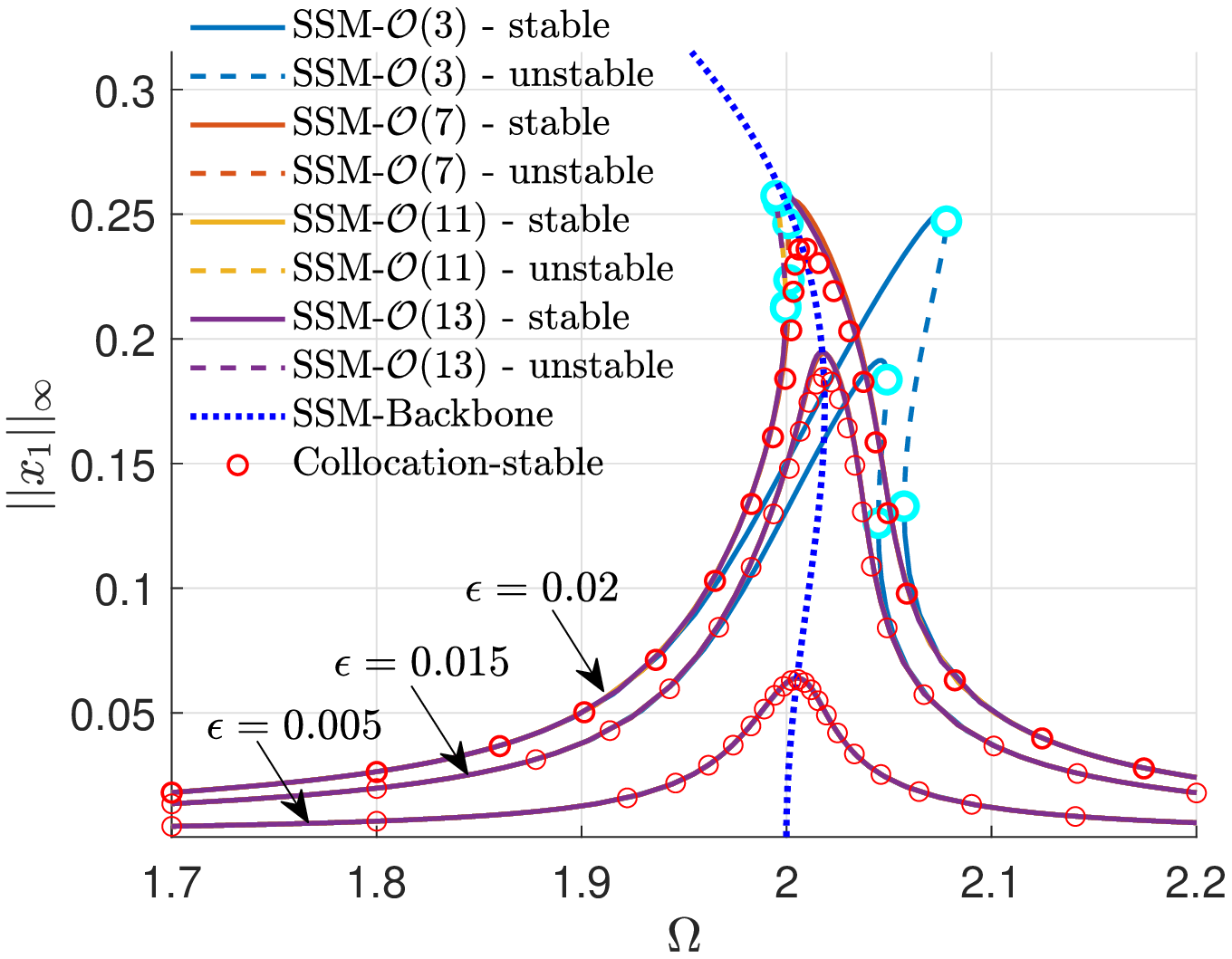}\\
\includegraphics[width=.45\textwidth]{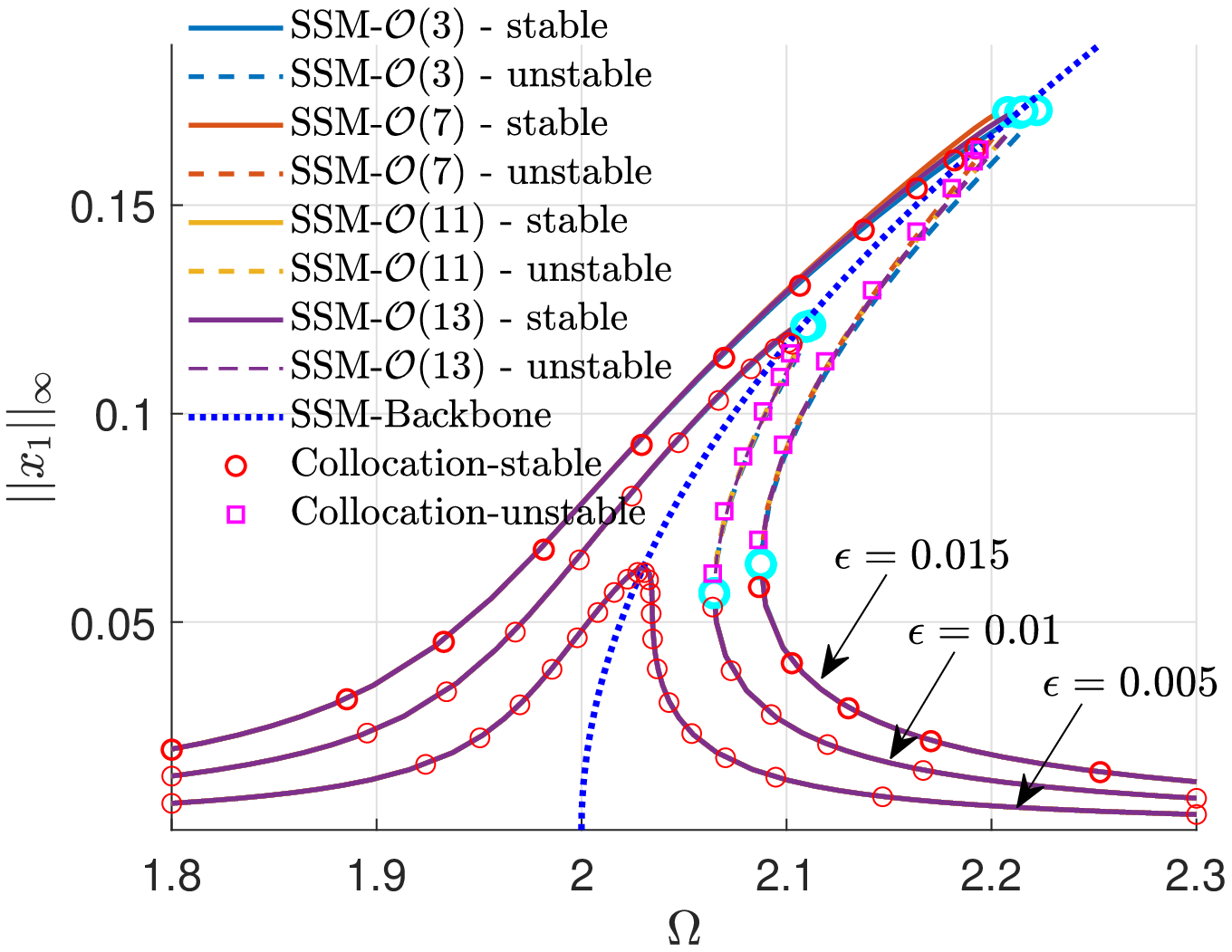}
\caption{\small Forced response curves in the vibration amplitude of $x_1$ for the oscillator in three-dimensional space without configuration constraints (upper-left panel), with the cubic configuration constraint (upper-right panel), and with the spherical configuration constraint (lower panel). The reference solutions of the full systems are obtained with collocation methods implemented in the \texttt{po}-toolbox of \textsc{coco}. The backbone curves in Fig.~\ref{fig:3Dos-x1-omega} are also included. Here, and throughout the paper, the solid lines of FRCs
indicate stable solution branches, while dashed lines of FRCs mark unstable solution branches. The cyan circles on FRCs denote saddle-node
bifurcation points.}
\label{fig:3Dos-x1-frc}
\end{figure*}

The near-resonant FRCs at various excitation amplitudes are shown in Fig.~\ref{fig:3Dos-x1-frc}. We observe that as the excitation amplitude $\epsilon$ increases, higher-order SSM expansions are required for convergence of the reduced response to the full system's response. For instance, for $\epsilon=0.01$ in the system without configuration constraints (upper-left panel of Fig.~\ref{fig:3Dos-x1-frc}), the FRC from SSM prediction already converges to the full reference solution at $\mathcal{O}(3)$ onwards. However, for $\epsilon=0.02$, the FRC from SSM prediction shows convergence to the full solution only beyond $\mathcal{O}(7)$.
In Fig.~\ref{fig:3Dos-x1-frc}, the full system's response (labelled with 'collocation') is obtained by parameter continuation of periodic orbits of~\eqref{eq:eom-3Doscillator} (and its variants in the presence of constraints) using the \texttt{po}-toolbox of \textsc{coco}~\cite{dankowicz2013recipes}.

We further observe from Fig.~\ref{fig:3Dos-x1-frc} that at the highest forcing amplitudes, the SSM-based reduced response converges to a response different from the full system's response. This may be attributed to the fact that we have neglected higher-order non-autonomous terms at $\mathcal{O}(\epsilon|\boldsymbol{p}|)$ in the expansions~\eqref{eq:ssm-nonauto}-\eqref{eq:red-nonauto}. To probe this further, we use an alternative pointwise validation technique that involves the computation of the residual of the invariance equation ~\eqref{eq:invariance} (see Fig.~\ref{fig:residual-none-003-sn}). 

\begin{figure}[!ht]
	\centering
	\includegraphics[width=.45\textwidth]{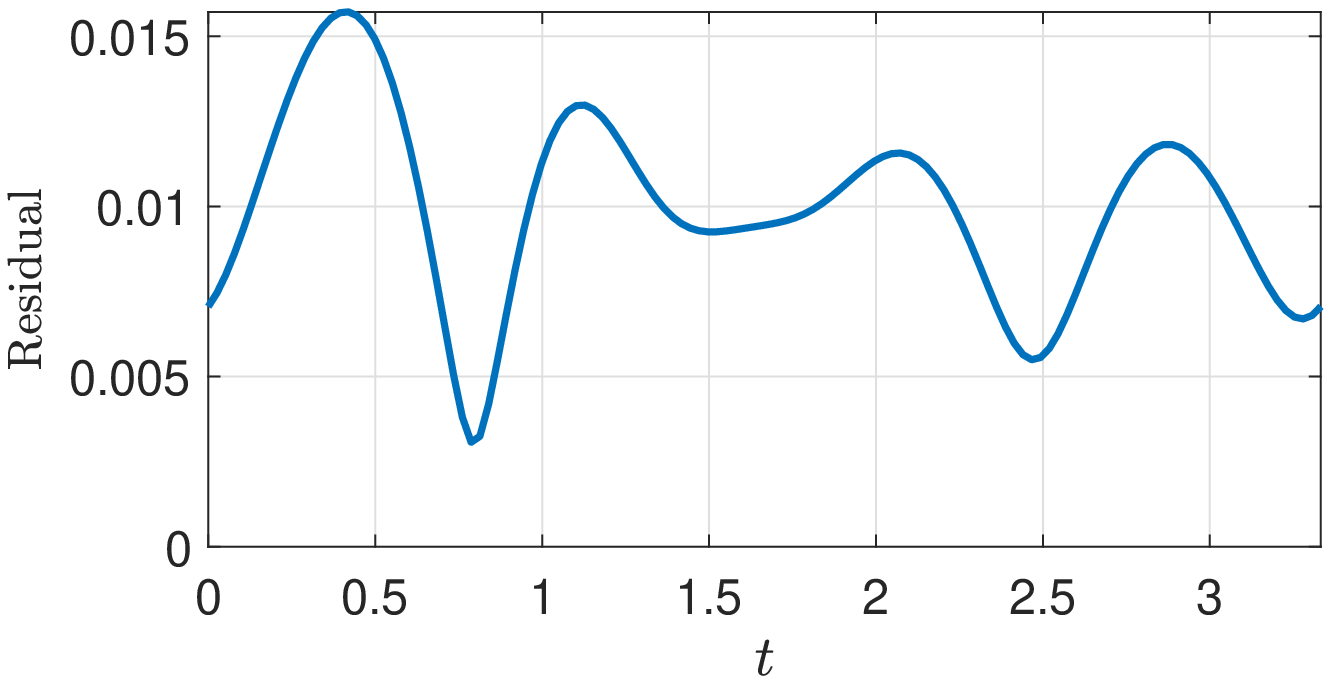}
	\caption{\small Time history of the residual of the invariance equation~\eqref{eq:invariance} for the saddle-node periodic orbit A shown in the upper-left panel of Fig.~\ref{fig:3Dos-x1-frc}.}
	\label{fig:residual-none-003-sn}
\end{figure}

As an illustration of this validation technique, we consider the periodic orbit at saddle-node bifurcation point, labelled `A' in Fig.~\ref{fig:3Dos-x1-frc}. This point obtained via an $\mathcal{O}(11)$-expansion of the SSM at $\epsilon=0.03$, overestimates the peak of FRC computed from the reference full solution. We determine the residual of the invariance equation~\eqref{eq:invariance} for this reduced solution and plot the residual time-history in Fig.~\ref{fig:residual-none-003-sn}. We observe that the maximum residual in Fig.~\ref{fig:residual-none-003-sn} is more than 0.015, which is large for an excitation amplitude $\epsilon=0.03$. Hence, our SSM approximations are not accurate for such values of $\epsilon$ and higher-order non-autonomous terms need to be considered for reducing the residual.  We remark that this validation technique does not require the full system's response in order to estimate the accuracy of the prediction.

\subsection{Pendulum models with and without internal resonances}
In this section, we demonstrate the computation of SSMs for systems with non-polynomial nonlinearities via SSMTool by recasting them into polynomial DAE systems using the approach of Section~\ref{sec:nonpoly-to-poly}.
\subsubsection{Simple pendulum}
 We first consider the pendulum equation~\eqref{eq:pend-ode} in its unforced limit, i.e., with $\epsilon=0$. Similarly to the previous example, we will compute the damped backbone curve via the reduced dynamics on the two-dimensional SSM of the DAE system~\eqref{eq:pend-dae}. Then, we will add external periodic forcing in the next two pendulum examples to compute the FRC via the reduced dynamics on the corresponding non-autonomous SSMs created by the forcing. 

For low damping ($0<c<2$), the eigenvalues for the linear part of the DAE system~\eqref{eq:pend-dae} are given by
\begin{equation}
    \lambda_1=0,\quad \lambda_{2,3}= -\frac{c}{2} \pm \mathrm{i}\sqrt{1 - \frac{c^2}{4}},\quad |\lambda_{4}|=\infty,
\end{equation}
where the zero and infinite magnitude eigenvalues are artifacts of reformulation of the ODE system~\eqref{eq:pend-ode} to the DAE system~\eqref{eq:pend-dae}, as discussed in Section~\ref{sec:nonpoly-to-poly}. Hence, we take the spectral subspace corresponding to $\lambda_{2,3}$ as the master subspace of the SSM. We consider two values of damping $c = 0.001, 0.1 $  and obtain SSM-based ROMs up to $\mathcal{O}(35)$ via SSMTool. 
\begin{figure}[!ht]
	\centering
	\includegraphics[width=.45\textwidth]{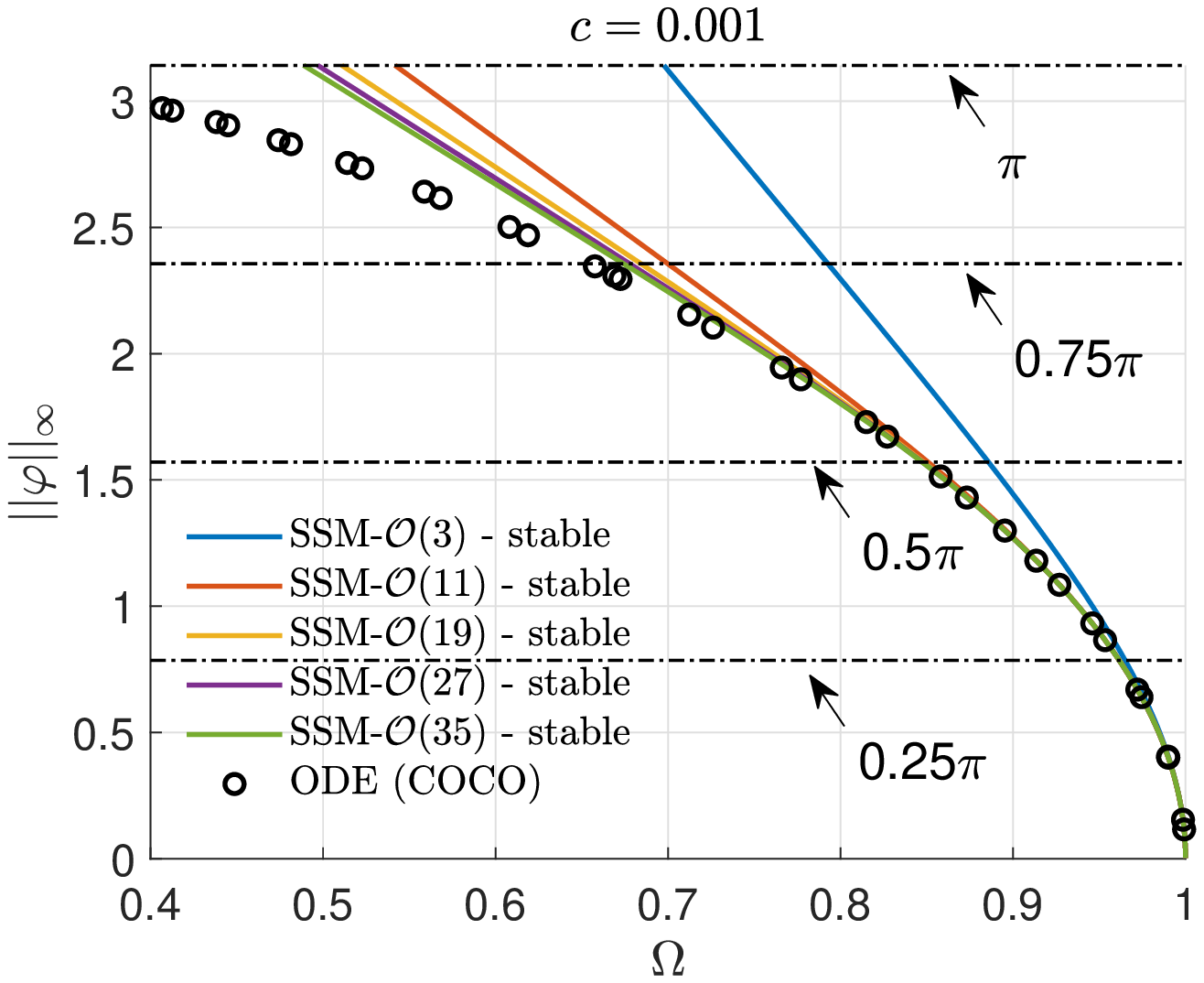}
	\includegraphics[width=.45\textwidth]{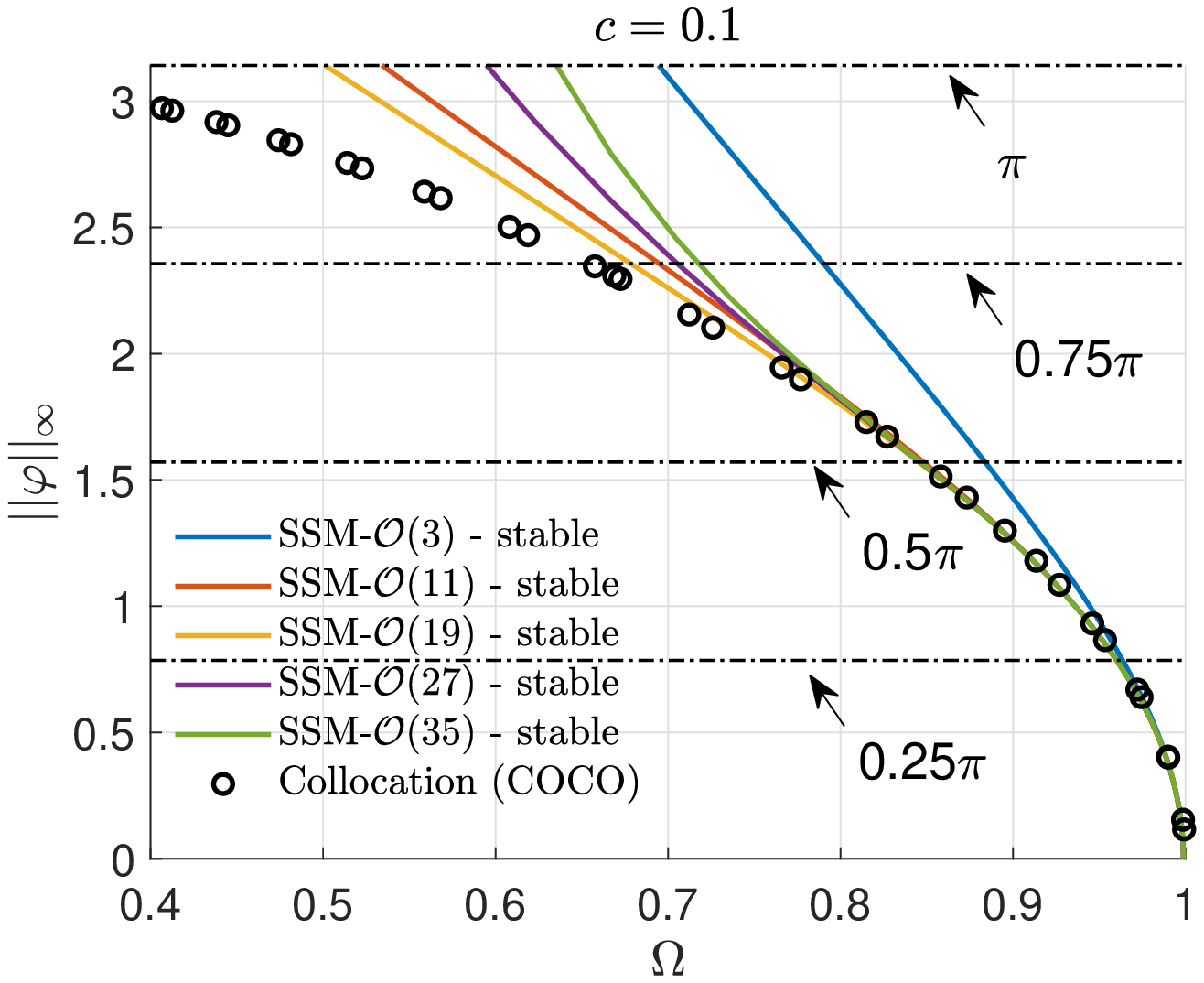}
	\caption{\small Backbone curves in the vibration amplitude of the angle of pendulum, obtained by 2-dimensional SSM reduction at various orders, and the parameter continuation of the original system in undamped limit.}
	\label{fig:single-pend-backbone}
\end{figure}

In Fig.~\ref{fig:single-pend-backbone}, we compare the damped-backbone curves obtained via SSM-based ROMs to the conservative backbone of the pendulum obtained via periodic orbit continuation using the \texttt{po}-toolbox of \textsc{coco}~\cite{dankowicz2013recipes}. We observe that for the lower damping value, $c=0.001$, the SSM-based damped backbone agrees with the conservative backbone till a higher response amplitude of around $\frac{3\pi}{4}$ relative to that for the higher damping value of $c=0.1$. Once again, this confirms the expectation that for lightly damped systems, the conservative backbone serves as a first-order approximation for the damped backbone curve.

\subsubsection{A pendulum-slider with 1:3 internal resonance}
\label{sec:pend-slider}
Next, we consider a pendulum attached to a slider under periodic forcing, as shown in Fig.~\ref{fig:slide_beam}. The FRC of periodic orbits for this system has been studied in~\cite{ju2021efficient} using the harmonic balance method. Here, we adjust the system parameters to introduce a 1:3 internal resonance between the first two modes of the system and study free and forced vibrations of the system using SSM reduction.

\begin{figure}[!ht]
\centering
\includegraphics[width=.3\textwidth]{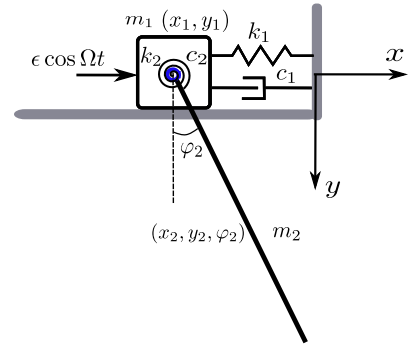}
\caption{\small A pendulum slider under periodic forcing.}
\label{fig:slide_beam}
\end{figure}

\begin{sloppypar}
Assuming the suspended beam in Fig.~\ref{fig:slide_beam} to be rigid, the equations of motion for the pendulum slider system are given as
\begin{gather}
m_1\ddot{x}_1+c_1\dot{x}_1+k_1x_1-\mu_2=\epsilon \cos\omega t,\nonumber\\
m_1\ddot{y}_1+\mu_1-\mu_3=m_1g,\nonumber\\
m_2\ddot{x}_2+\mu_2=0,\nonumber\\
m_2\ddot{y}_2+\mu_3=m_2g,\nonumber\\
J_2\ddot{\varphi}_2+c_2\dot{\varphi}_2+k_2\varphi_2\nonumber\\
\quad\quad-0.5l\mu_2\cos\varphi_2+0.5l\mu_3\sin\varphi_2=0,\label{eq:eom-slider}
\end{gather}
where $g$ is the acceleration due to gravity and the masses $m_1$ and $m_2$ satisfy the following configuration constraints 
\begin{align}
    g_1&=y_1 = 0,\nonumber\\ 
    g_2 &= x_2-(x_1+0.5l\sin\varphi_2) = 0,\nonumber\\ 
    g_3&=y_2-(y_1+0.5l\cos\varphi_2) = 0.\label{eq:constraints-slider}
\end{align}
We note that the origin is not an equilibrium of the unforced system and perform the transformation
\begin{gather}
    y_2=\hat{y}_2+0.5l,\nonumber\\ \mu_1=\hat{\mu}_1+m_1g+m_2g,\nonumber\\ \mu_3=\hat{\mu}_3+m_2g,
\end{gather}
such that the trivial equilibrium is shifted to the origin. Similarly to the previous simple pendulum example, we introduce the auxiliary variables $u_1=\sin\varphi_2$ and $u_2=1-\cos\varphi_2$ to recast the trigonometric functions in the equations of motion into polynomials as
\begin{gather}
m_1\ddot{x}_1+c_1\dot{x}_1+k_1x_1-\mu_2=\epsilon f_1\cos\omega t,\nonumber\\
 m_1\ddot{y}_1+\hat{\mu}_1-\hat{\mu}_3=0,\nonumber\\
m_2\ddot{x}_2+\mu_2=0,\nonumber\\
m_2\ddot{\hat{y}}_2+\hat{\mu}_3=0,\nonumber\\
J_2\ddot{\varphi}_2+c_2\dot{\varphi}_2+k_2\varphi_2-0.5l\mu_2(1-u_2)\nonumber\\
\qquad\,\,+0.5l(\hat{\mu}_3+m_2g)u_1=0,\nonumber\\
g_1=y_1=0,\nonumber\\
 g_2=x_2-(x_1+0.5lu_1)=0,\nonumber\\
g_3=\hat{y}_2-y_1+0.5lu_2=0,\nonumber\\
\dot{u}_1=(1-u_2)\dot{\varphi}_2,\nonumber\\
 u_1^2+(1-u_2)^2=1\label{eq:dae-pend-slider}.
\end{gather}
With the state vector
\begin{align}
  \boldsymbol{z}=
   (& x_1,y_1,x_2,\hat{y}_2,\varphi_2,\dot{x}_1,\dot{y}_1,\dot{x}_2,\nonumber\\
   &\dot{\hat{y}}_2,\dot{\varphi}_2,\mu_1,\mu_2,\mu_3,u_1,u_2),  
\end{align} 
we can rewrite the DAE system~\eqref{eq:dae-pend-slider} in the first-order form~\eqref{eq:full-first} with a 15-dimensional phase space.
\end{sloppypar}

For the system parameters $m_1=m_2=1$, $c_1=c_2=0.02$, $k_1=7.48$, $k_2=1$ and $g=9.8$, the eigenvalues of the linear part of the DAE system~\eqref{eq:dae-pend-slider} are given as
\begin{gather}
  \lambda_{1,2}=-0.0047\pm1.8522\mathrm{i},\nonumber\\ \lambda_{3,4}=-0.0513\pm5.5561\mathrm{i},\nonumber\\
  \lambda_5=0,\quad |\lambda_{6,\cdots,15}|=\infty.
\end{gather}
As $\lambda_{3,4}\approx3\lambda_{1,2}$, the system exhibits a near 1:3 internal resonance for the chosen parameters. Thus, we take the spectral subspace spanned by the first two modes as the master subspace for SSM computation. We compute the corresponding 4-dimensional SSM and its reduced dynamics via SSMTool. In the unforced setting ($\epsilon=0$), the autonomous ROM on the SSM in polar coordinates $\boldsymbol{p}=(\rho_1 e^{\mathrm{i}\vartheta_1},\rho_1 e^{-\mathrm{i}\vartheta_1},\rho_2 e^{\mathrm{i}\vartheta_2},\rho_2 e^{-\mathrm{i}\vartheta_2})$ up to cubic terms is given as
\begin{align}
    \dot{\rho}_1=&-{1.096\cdot 10^{-6}}\rho_1^3+ \nonumber\\
    & {\left({2.278\cdot10^{-5}}\cos\sigma+4.657\cdot10^{-4}\sin\sigma\right)}\rho_1^2\rho_2\nonumber\\
    & -{5.61\cdot 10^{-6}}\rho_1\rho_2^2 -4.701\cdot 10^{-3}\rho_1,\nonumber\\
    \dot{\vartheta}_1= & 8.074\cdot10^{-4}\rho_1^2 +\nonumber\\
    & {\left(4.657\cdot10^{-4}\cos\sigma -{2.278\cdot10^{-5}}\sin\sigma\right)}\rho_1\rho_2\nonumber\\
    & -1.278\cdot10^{-3}\rho_2^2 +1.852,\nonumber\\
    \dot{\rho}_2=&{\left(-7.438\cdot10^{-5}\cos\sigma -1.692\cdot10^{-3}\sin\sigma\right)}\rho_1^3+\nonumber\\& 6.25\cdot10^{-4}\rho_1^2\rho_2 +2.125\cdot10^{-3}\rho_2^3-0.0513\rho_2,\nonumber\\
    \dot{\vartheta}_2=& {\left(1.692\cdot10^{-3}\cos\sigma -{7.438\cdot10^{-5}}\sin\sigma\right)}\rho_1^3/\rho_2\nonumber\\
    & -0.1007\rho_2^2 -0.01393\rho_1^2 +5.556,
\end{align}
where $\sigma=3\vartheta_1-\vartheta_2$. We see coupling terms between the dynamics of the two pair of modes due to the internal resonance. The damped backbone curve defined by instantaneous frequency (cf.~\eqref{eq:red-free}-\eqref{eq:red-sphere}) is
inapplicable because of the coupling terms. Then we are not able to select the expansion order of the SSM based on the convergence of the backbone curve.

\subsubsection*{A posteriori invariance error measure}
Next, we provide an a posteriori error estimation method which is also useful in determining an appropriate order for the SSM expansion. To this end, we compute the residual of the invariance equation~\eqref{eq:invariance-auto} over a set of sampled points on the computed SSM and use the averaged residual over the samples as a measure for the invariance error. The sample set of points on the SSM is constructed as
\begin{equation}
\label{eq:samples-on-ssm}
    \boldsymbol{p}_{ijk}=\begin{pmatrix}\varrho\cos\alpha_ie^{\mathrm{i}\vartheta_{1,j}}\\\varrho\cos\alpha_ie^{-\mathrm{i}\vartheta_{1,j}}\\\varrho\sin\alpha_ie^{\mathrm{i}\vartheta_{1,k}}\\\varrho\sin\alpha_ie^{-\mathrm{i}\vartheta_{1,k}}\end{pmatrix},\,\,
\end{equation}
for $ 1\leq i\leq n_\alpha$ and $1\leq i,j\leq n_\vartheta$
Here, each $ \boldsymbol{p}_{ijk} $ is a point on a 4-dimensional sphere with radius $\varrho=\sqrt{\rho_1^2+\rho_2^2}$ in the SSM-parametrization space, and
\begin{gather}
    \alpha_i=\frac{(i-1)\pi}{2(n_\alpha-1)},\quad \vartheta_{1,j}=\frac{2(j-1)\pi}{n_\vartheta},\nonumber\\
    \vartheta_{1,k}=\frac{2(k-1)\pi}{n_\vartheta}.
\end{gather}
We choose $\alpha_i\in[0,\pi/2]$ such that $\rho_1\geq0$ and $\rho_2\geq0$. As the numbers $n_\alpha, n_\vartheta$ increase, we obtain a more refined sampling over the 4-sphere. Now, $\boldsymbol{W}(\boldsymbol{p}_{ijk})$ provides the coordinates of the sampled points on the SSM in the full phase space, which enables us to calculate the averaged invariance error as
\begin{equation}
\label{eq:auto-error-measure}
    \mathrm{Error} =\frac{1}{Nn_{\alpha}n_{\vartheta}^2}
    \sum_{i=1}^{n_\alpha}\sum_{j=1}^{n_\vartheta}\sum_{k=1}^{n_\vartheta} ||\mathbf{Res}(\boldsymbol{p}_{ijk})||_2.
\end{equation}
where
\begin{align}
    \mathbf{Res}(\boldsymbol{p}_{ijk})=&\boldsymbol{B}{D}_{\boldsymbol{p}}\boldsymbol{W}(\boldsymbol{p}_{ijk}) \boldsymbol{R}(\boldsymbol{p}_{ijk})-\nonumber\\
    & \boldsymbol{A}\boldsymbol{W}(\boldsymbol{p}_{ijk})-\boldsymbol{F}( \boldsymbol{W}(\boldsymbol{p}_{ijk})).
\end{align}
Here we normalize the error norm by dividing it by the phase space dimension $N$ as the Euclidean norm of the residual vector increases linearly with the dimension of the phase space. We use such a normalized error measure to characterize the accuracy of SSM expansions.
Generally, for a fixed small value of $\varrho$, i.e., near the origin, the invariance error~\eqref{eq:auto-error-measure} will decrease as the order of approximation of the SSM increases. We remark that the invariance error estimate~\eqref{eq:auto-error-measure} does not require any simulation of the full system or of the reduced system and is applicable for general invariant manifolds. Furthermore, while we have only discussed the case of 4-dimensional SSM for simplicity, the sampling formula~\eqref{eq:samples-on-ssm} can be generalized to $2m$-dimensional SSMs for $ m\in\mathbb{N}$.

\begin{figure}[!ht]
	\centering
	\includegraphics[width=.5\textwidth]{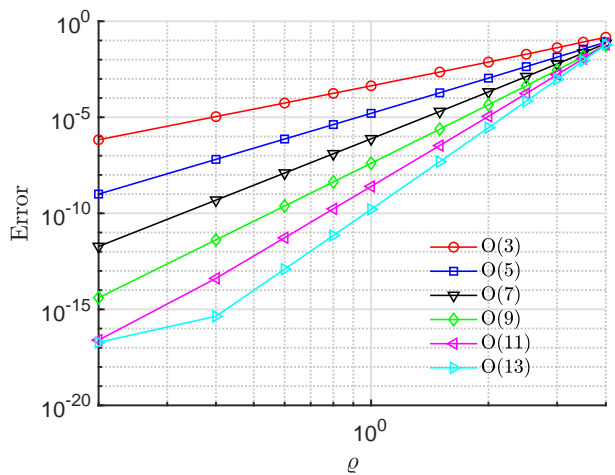}
	\caption{Invariance error measure~\eqref{eq:auto-error-measure} of the 4-dimensional SSM approximations of the pendulum slider system~\eqref{eq:dae-pend-slider} at various orders as the distance from the origin ($\varrho$) varies.}
	\label{fig:error-auto-slide-pend}
\end{figure}

In this example, we choose $n_\alpha=10$ and $n_\vartheta=30$ and calculate the invariance errors for different values of $\varrho$ and expansion order. The logarithmic plot in Fig.\ref{fig:error-auto-slide-pend} shows that the invariance error decays following a power law  until it approaches machine precision as $\varrho$ decreases. The corresponding decay rate increases with increasing order of approximation of the SSM, as expected. Furthermore, we observe that for a given $\varrho$, the error decreases with increase in the approximation order up to a critial value of $\varrho\approx 4$, beyond which the error seems unaffected with increase in the expansion order. Hence, we conclude that $\varrho=4$ is close to the boundary of the domain of convergence of our SSM expansions. 

Indeed, Fig.~\ref{fig:error-auto-slide-pend} provides critical insights for assessing the accuracy of SSM-based ROM predictions and for choosing a suitable order of approximation for the SSM. For instance, with an error tolerance of 0.01, an $\mathcal{O}(3)$-expansion is sufficient in the domain $\varrho\leq1$, whereas $\mathcal{O}(13)$-expansion is needed to guarantee similar accuracy in the domain  $\varrho\leq3$. 

\subsubsection*{Time histories and forced response curves}

Next, we choose an error tolerance of 0.01, which is near but within the convergence domain boundary deduced from Fig.~\ref{fig:error-auto-slide-pend}. We compare the trajectories from the simulations of the  SSM-based ROM upto $\mathcal{O}(13)$ with those of the full system. In particular, we consider two sets of initial conditions as
\begin{gather}
    \mathrm{IC1}: \boldsymbol{p}_0 = (3.5e^{\mathrm{i}},3.5e^{-\mathrm{i}},0,0),\nonumber\\ \mathrm{IC2}: \boldsymbol{p}_0 = (0,0,3.5e^{\mathrm{i}},3.5e^{-\mathrm{i}}),\label{eq:ic-slide-pend-ssm}
\end{gather}
where IC1 and IC2 are chosen along the first and the second mode on a hypersphere of radius $\varrho=3.5$ in the parametrization space of the SSM. Physically, the first mode is dominated by the vibration of the slider along the $x_1$ direction whereas the second mode is dominated by the oscillation of the pendulum ($\varphi_2$). The trajectories of the horizontal displacement $x_1$, the rotation angle $\varphi_2$ of the pendulum initialized at IC1 and IC2 are shown in the left and right panels of Fig.~\ref{fig:slide-pend-ic1}. The results obtained by SSM prediction match well with the reference results of the full system. An excellent match is also obtained for Lagrange multipliers, as detailed in Appendix~\ref{sec:appendix-slider}. This indicates a high accuracy of SSM prediction for reaction forces as well.

\begin{figure*}[!ht]
	\centering
	\includegraphics[width=.45\textwidth]{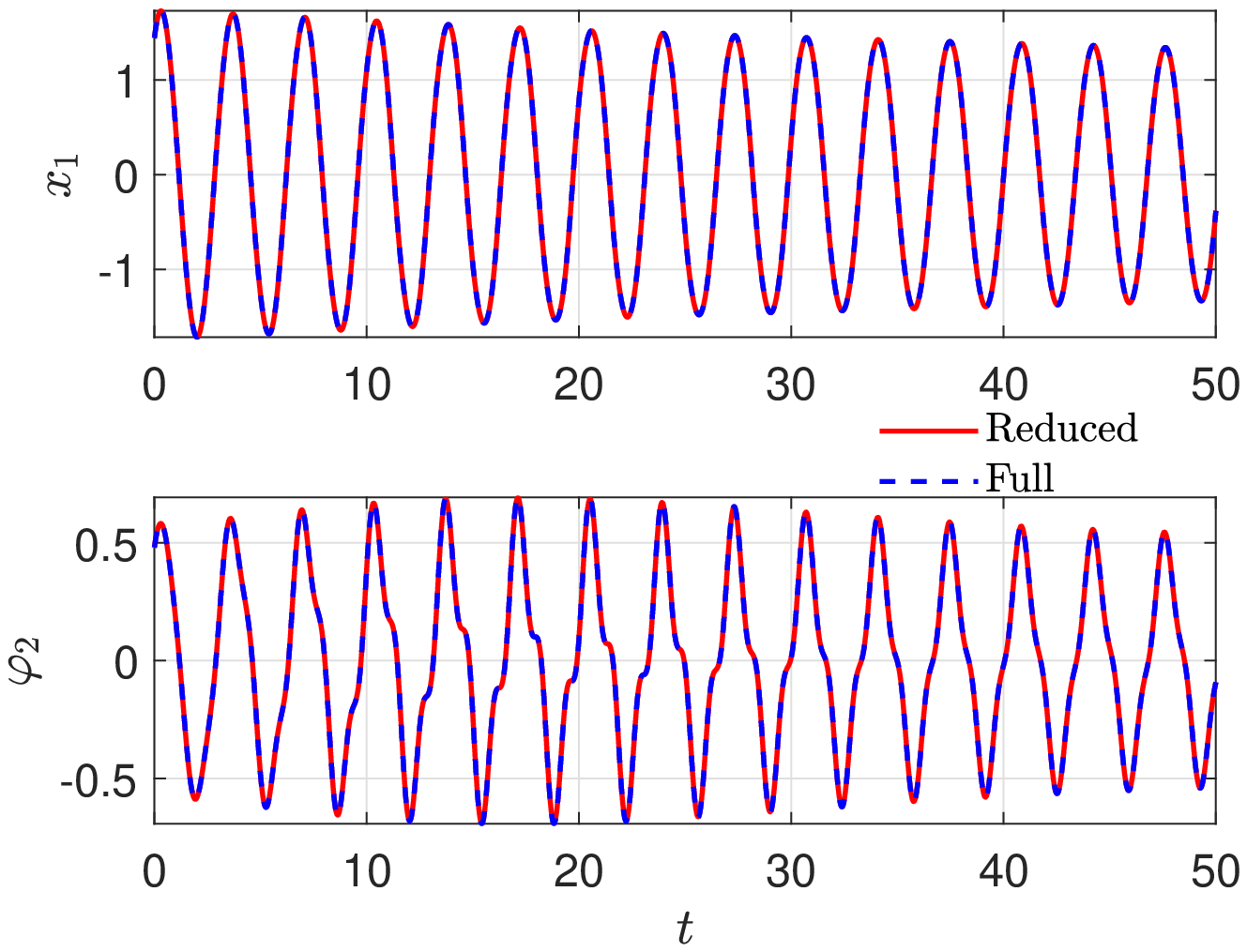}
	\includegraphics[width=.45\textwidth]{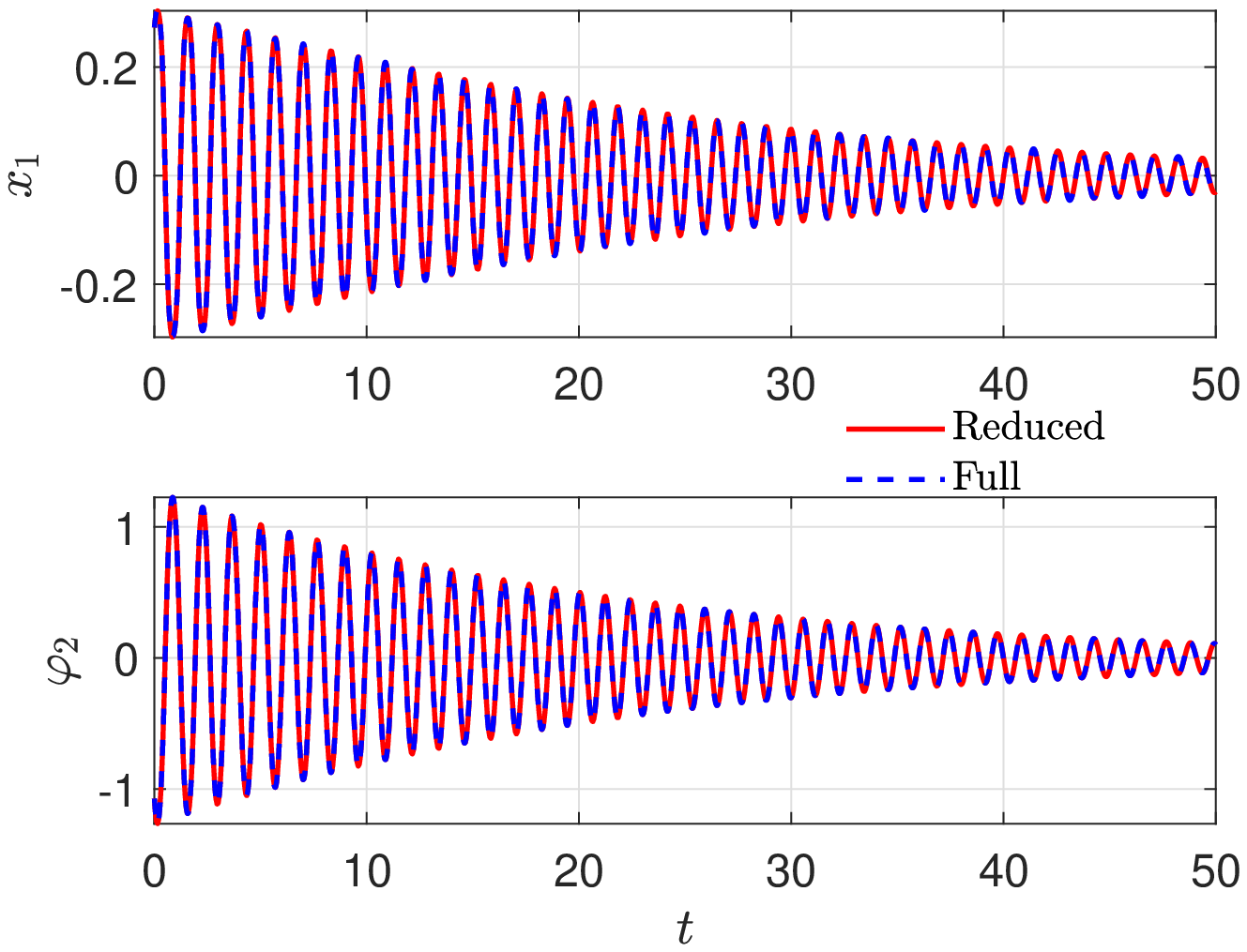}	
	\caption{Time histories for the displacements $(x_1,\varphi_2)$ of the pendulum slider system~\eqref{eq:dae-pend-slider} with initial condition IC1 $(3.5e^{\mathrm{i}},3.5e^{-\mathrm{i}},0,0)$ (left) and IC2 $(0,0,3.5e^{\mathrm{i}},3.5e^{-\mathrm{i}})$ (right) on the SSM approximated up to $\mathcal{O}(13)$.}
	\label{fig:slide-pend-ic1}
\end{figure*}

The response in the left panel of Fig.~\ref{fig:slide-pend-ic1} is dominated by the first mode that involves the vibration of the slider near the first natural frequency  whereas the response in right panel is dominated by the second mode with oscillation of the pendulum at near thrice the frequency of the first mode. Interestingly, the response time history of $\varphi_2$ in the left panel of Fig.~\ref{fig:slide-pend-ic1} indicates modal interaction as the amplitude first increases and then decays, i.e., exchange of energy between the vibration of the slider and the oscillation of the pendulum. 


Adding a periodic forcing $\epsilon\cos\Omega t$ to the slider, we now compute the FRC of the system~\eqref{eq:dae-pend-slider} with $\epsilon=0.08$ and $\Omega\approx\mathrm{Im}(\lambda_1)$ using SSMTool. The FRC is obtained directly by analyzing the reduced-dynamics of the 4-dimensional SSM in a normal-form parametrization style, where the periodic orbit is simply given by the solution to a fixed point problem (see~\cite{part-i} for details). FRCs for the horizontal vibration of the slider ($x_1$) and the oscillation of the pendulum ($\varphi_2$) are shown in Fig.~\ref{fig:slide-pend-frc}. We observe that the FRCs obtained by SSM-based reduction converge towards the reference solution at $\mathcal{O}(7)$. 

\begin{figure}[!ht]
	\centering
	\includegraphics[width=.45\textwidth]{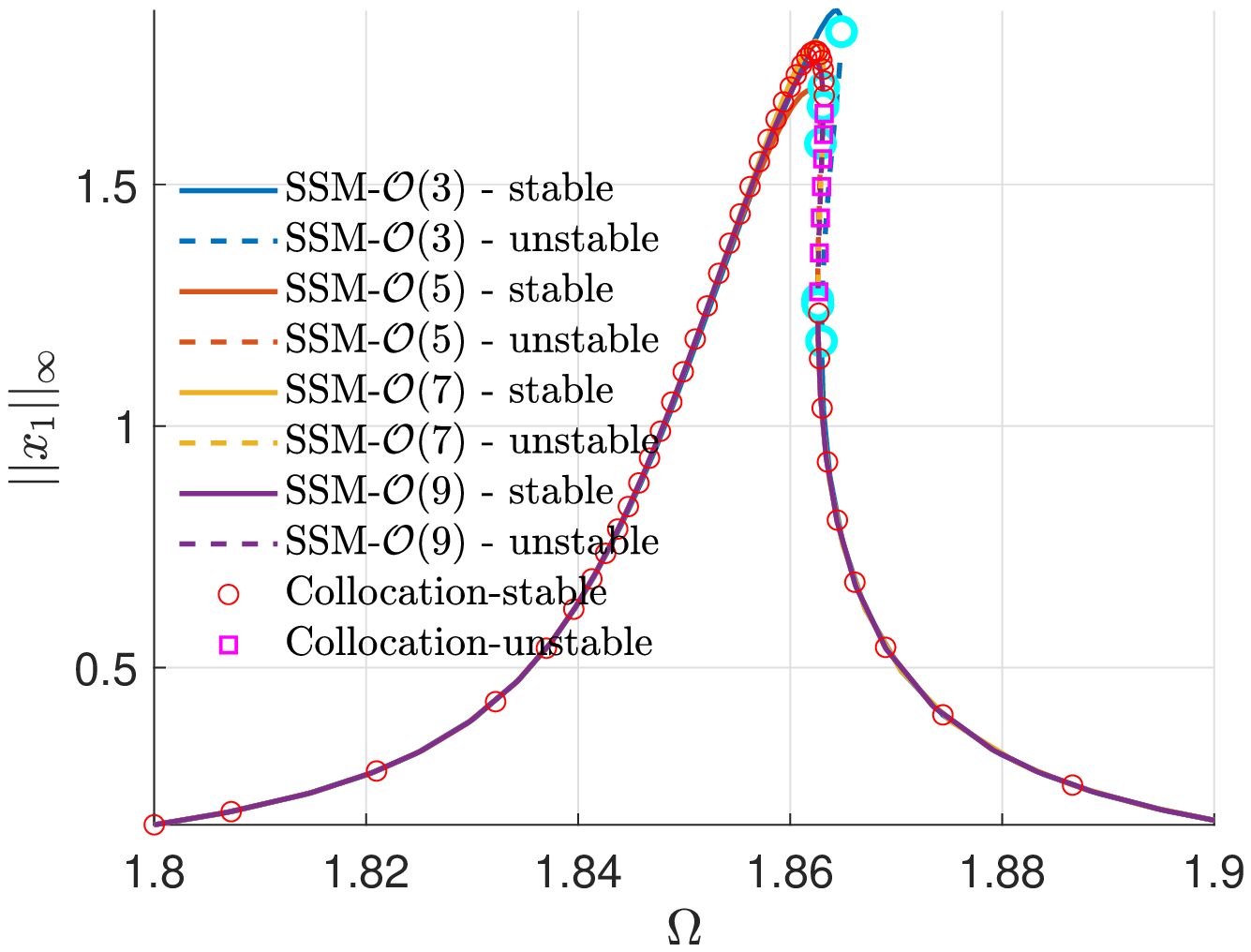}
	\includegraphics[width=.45\textwidth]{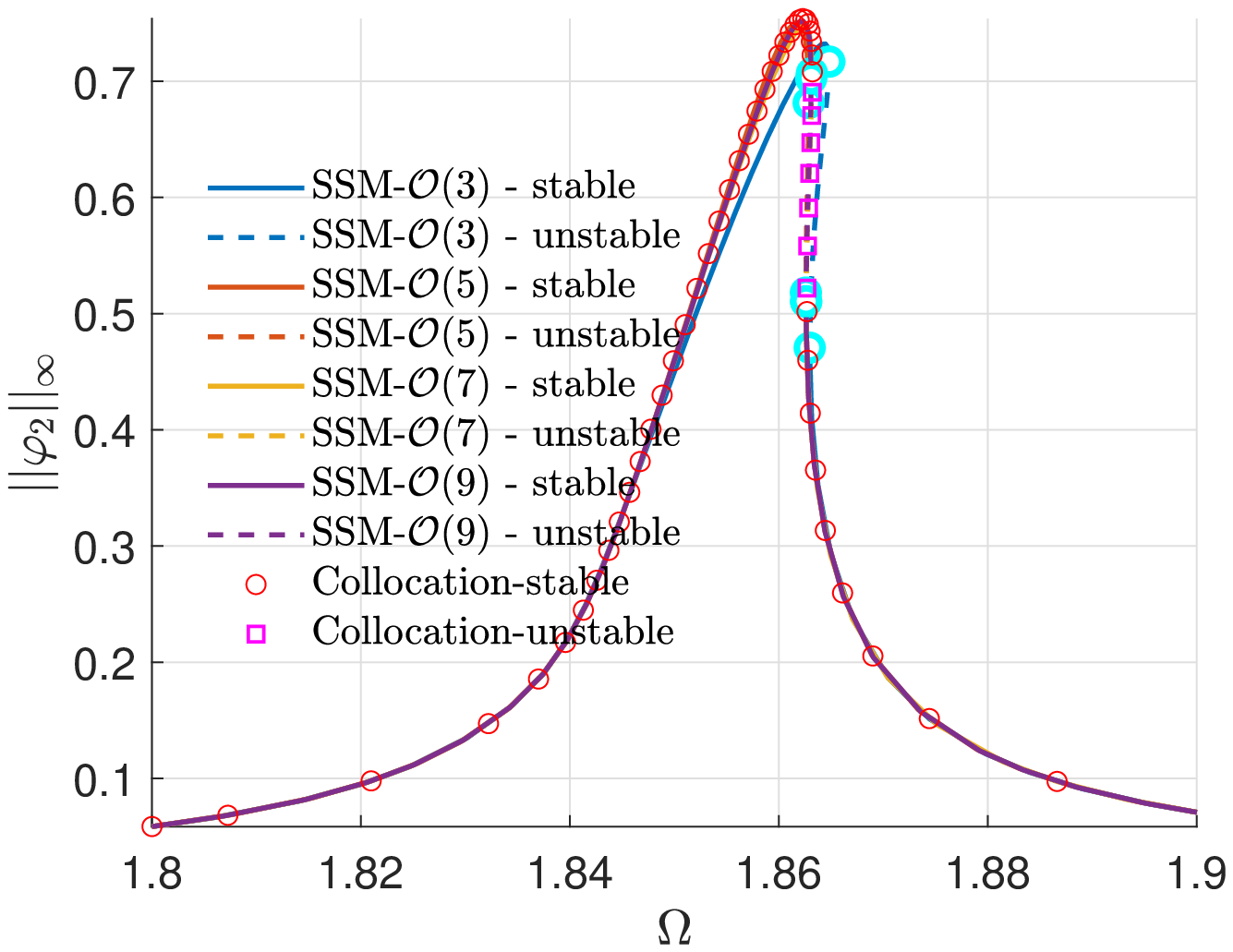}
	\caption{Forced response curves in vibration amplitude of the slider ($x_1$, left panel) and the pendulum ($\varphi_2$, right panel), obtained by SSM reduction at various approximation orders. The reference solutions obtained by parameter continuation of the periodic orbits of~\eqref{eq:pend-slide-ode} (labelled as `Collocation') are presented for validation.}
	\label{fig:slide-pend-frc}
\end{figure}

\begin{sloppypar}
We obtain the reference solutions in Figs.~\ref{fig:slide-pend-ic1} and \ref{fig:slide-pend-frc} by simulating the Euler–Lagrange equations of the full system in the generalized coordinates $x_1$ and $\varphi_2$ given by
\begin{align}
	m_1\ddot{x}_1&+m_2(\ddot{x}_1-0.5l\sin\varphi_2\dot{\varphi}_2^2+0.5l\cos\varphi_2\ddot{\varphi}_2)\nonumber\\&+c_1\dot{x}_1+k_1x_1=\epsilon f_1\cos\omega t,\nonumber\\
	J_2\ddot{\varphi}_2&+m_2\left(0.5l\ddot{x}_1\cos\varphi_2+0.25l^2\ddot{\varphi}_2\right)\nonumber\\
	& +c_2\dot{\varphi}_2+k_2\varphi_2+0.5lm_2g\sin\varphi_2=0\label{eq:pend-slide-ode}.
\end{align}
In particular, the reference FRC in Fig.~\ref{fig:slide-pend-frc} is computed via parameter continuation using the \texttt{po}-toolbox of~\textsc{coco}~\cite{COCO} on system~\eqref{eq:pend-slide-ode}. We remark that the derivation of Euler–Lagrange equation with minimal number of coordinates, while concise in this simple example, becomes cumbersome and unfeasible for higher-dimensional constrained mechanical systems, such as the pendulum chain in our next example. At the same time, the derivation of DAE formulations remains straightforward for high-dimensional systems.
\end{sloppypar}

\subsubsection{A chain of pendulums}
\label{sec:example-chain}
As our final pendulum-based example, we consider a chain of pendulums attached to a slider~\cite{ju2021efficient}, illustrated in Fig.~\ref{fig:pend-chain}.  We derive the equations of motion of this system in the form~\eqref{eq:full-first} in Appendix~\ref{sec:eom-pend}.

\begin{figure}[!ht]
\centering
\includegraphics[width=.35\textwidth]{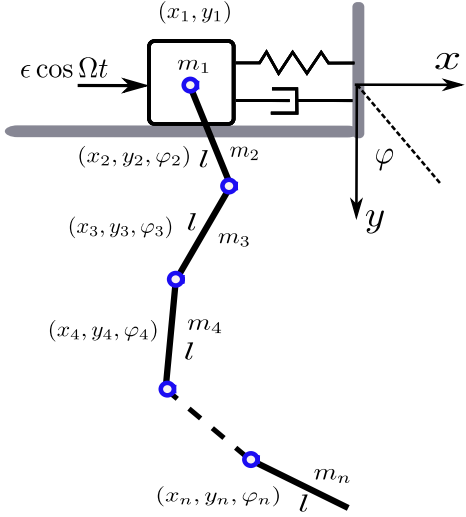}
\caption{A chain of pendulums attached to a slider.}
\label{fig:pend-chain}
\end{figure}

We choose the system parameters in Fig.~\ref{fig:pend-chain} as $n=41$, $m_1=0.61$, $m_2=\cdots=m_{41}=0.02$, $c_1=0.22$, $c=0.02$, $k_1=6.5$, $k=4.1$, and $l=0.03$. The first two pairs of nontrivial eigenvalues of the linear part of the DAE system are given as
\begin{gather}
  \lambda_{1,2}=-0.0569\pm1.9939\mathrm{i},\nonumber\\
  \lambda_{3,4}=-0.0730\pm4.9038\mathrm{i}.
\end{gather}
Due to the absence of any (near) internal resonance, we choose the slowest spectral subspace spanned by the first mode as the master subspace for SSM reduction. We compute the corresponding two-dimensional SSM and its reduced dynamics via SSMTool. In the unforced setting ($\epsilon=0$), the autonomous ROM on the SSM in polar coordinates $ (\rho ,\vartheta) $ upto $\mathcal{O}(13)$ is given as
\begin{align}
     \dot{\rho}=&-0.05691\rho_1-0.004259{\rho_1}^3+0.0001677{\rho}_1^5\nonumber\\
     & +0.000832{\rho}_1^7 -0.0009862{\rho}_1^9+0.001242{\rho}_1^{11}\nonumber\\
     & -0.001757{\rho}_1^{13} ,\nonumber\\
    \dot{\vartheta}= & 1.994+0.04249{\rho}_1^2-0.0107{\rho}_1^4\nonumber\\
    & +0.0003925{\rho}_1^6 +0.002133{\rho}_1^8 -0.002778{\rho}_1^{10}\nonumber\\ & +0.004065{\rho}_1^{12}=\omega(\rho) .\label{eq:chain-red}
\end{align}

\begin{figure}[!ht]
	\centering
	\includegraphics[width=.45\textwidth]{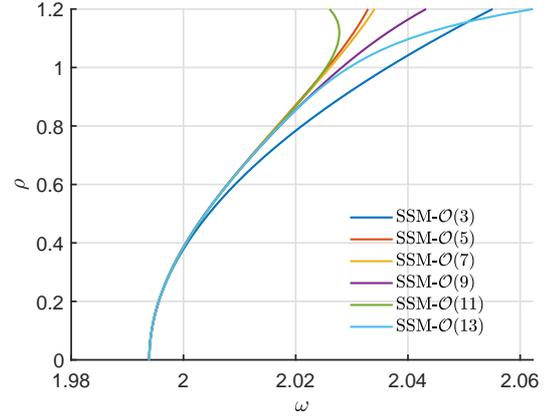}
	\caption{Backbone curves in reduced coordinates for the chain of pendulums attached to a slider.}
	\label{fig:chain-rho-omega}
\end{figure}

The damped backbone curves obtained directly from the reduced dynamics~\eqref{eq:chain-red} for different orders of SSM approximation are shown in Fig.~\ref{fig:chain-rho-omega}. Similarly to the previous examples, we observe that higher-order expansions are useful to obtain convergence in backbones at higher amplitudes. In particular, the backbone curves in Fig.~\ref{fig:chain-rho-omega} show convergence at $\mathcal{O}(3)$ for $\rho\leq0.3$, at $\mathcal{O}(5)$ expansion for $\rho\leq0.6$, and at $\mathcal{O}(13)$ expansion for $\rho\leq0.8$. The backbone curves do not seem to converge for $\rho\geq1$, indicating $\rho=1$ is outside the domain of convergence for the power series $\omega(\rho)$ in~\eqref{eq:chain-red}.

\begin{figure}[!ht]
	\centering
	\includegraphics[width=.45\textwidth]{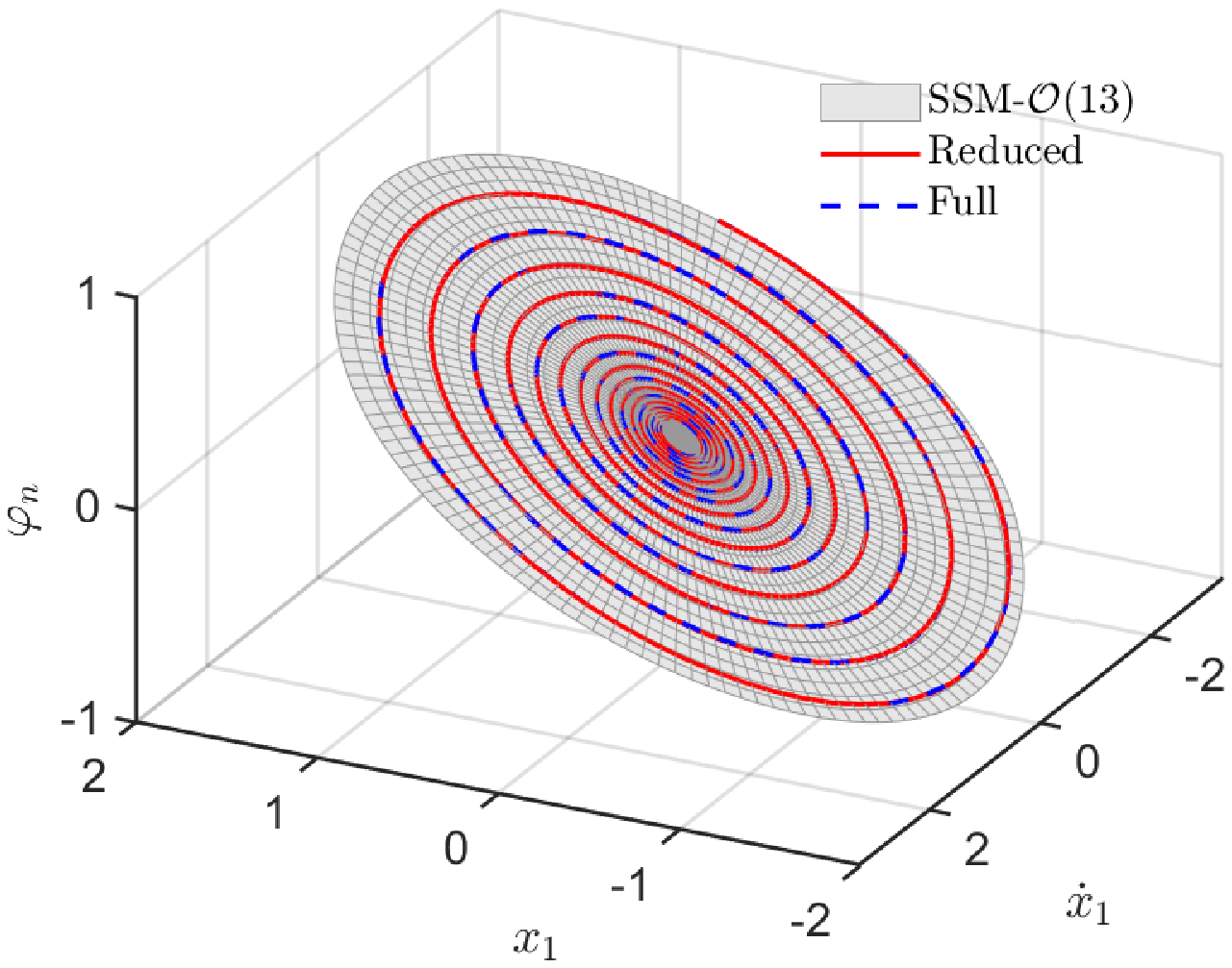}
	\includegraphics[width=.45\textwidth]{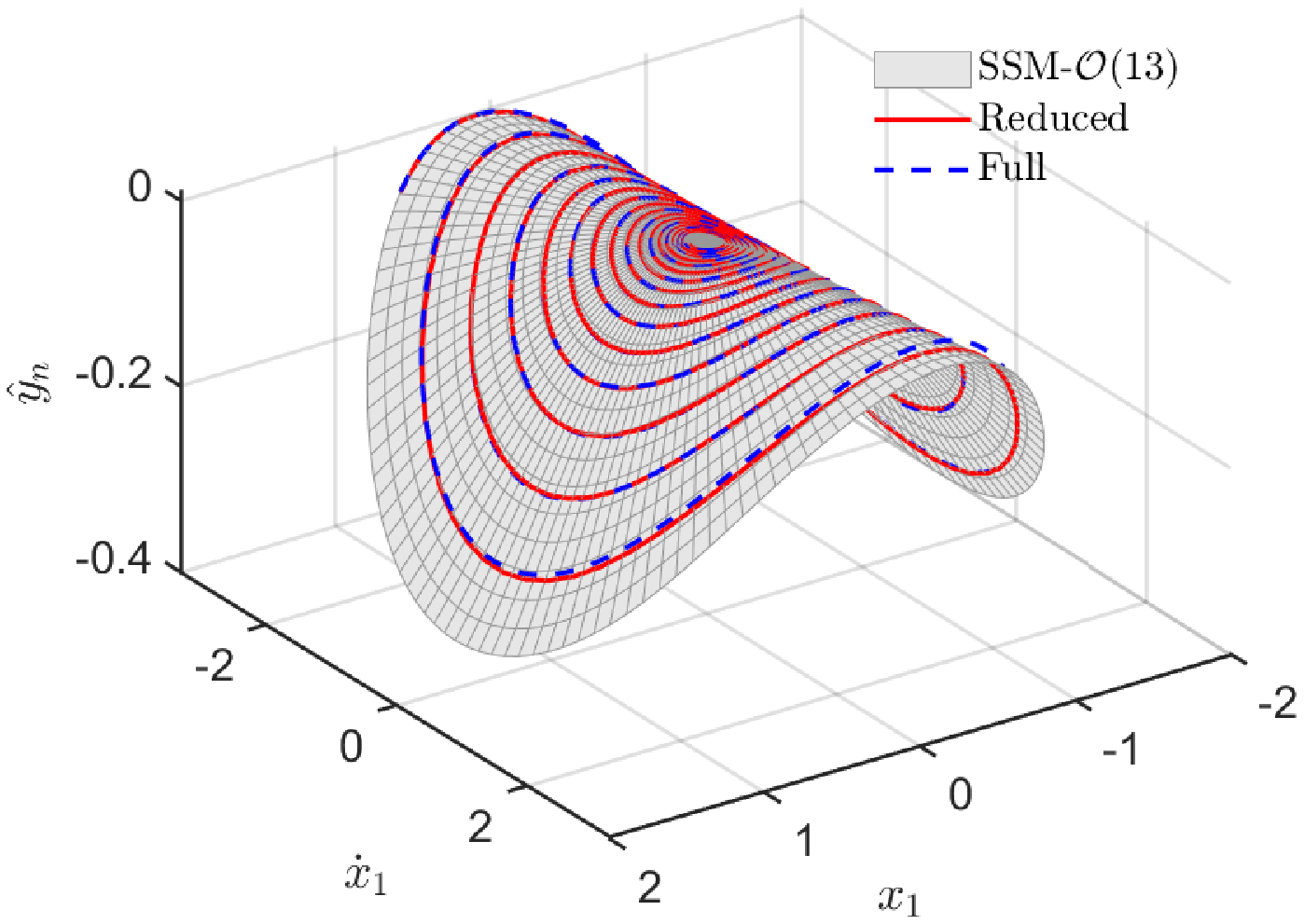}
	\caption{Projections of the SSM at $\mathcal{O}(13)$ approximation for the chain of pendulums onto $(x_1,\dot{x}_1,\varphi_n)$ and $(x_1,\dot{x}_1,\hat{y}_n)$. The red solid lines indicates the projections of the trajectory of the reduced-order model~\eqref{eq:chain-red}, starting from the initial position $(\rho_0,\vartheta_0)=(0.8,3)$. The dashed blue lines represent the projections of the trajectory of the full system for the same initial position. The thin solid gray curves represent contour lines of equal parameterized distance $\rho$ and $\vartheta$.}
	\label{fig:chain-ssm}
\end{figure}

To validate convergence, we illustrate the invariance of the SSM approximated upto $\mathcal{O}(13)$ within the convergence domain $\rho\leq0.8$ in Fig.~\ref{fig:chain-ssm}. To check invariance, we pick an initial condition $\boldsymbol{p}_0=(\rho_0 e^{\mathrm{i}\vartheta_0},\rho_0e^{-\mathrm{i}\vartheta_0})$ on the SSM with $\rho_0=0.8$ and $\vartheta_0=3$, and perform time integration of both the SSM-based ROM and the full system in the index-1 formulation~\eqref{eq:eom-2nd-nolambda}. Indeed, the full-system trajectory stays on the computed SSM and overlaps with the prediction of the SSM-based ROM. 

\begin{figure}[!ht]
	\centering
	\includegraphics[width=.45\textwidth]{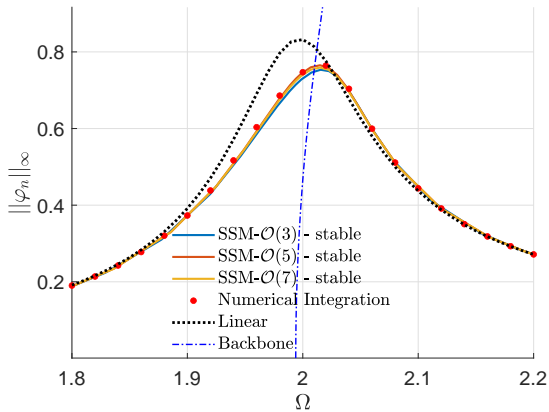}
	\caption{Forced response curve and backbone curve in the rotation angle of the last pendulum in the chain of pendulums.}
	\label{fig:chain-frc}
\end{figure}

Adding a periodic forcing $\epsilon\cos\Omega t$ to the slider in Fig.~\ref{fig:pend-chain}, we compute the FRC of the system~\eqref{eq:chain-eom} with $\epsilon=0.6$ and $\Omega\approx\mathrm{Im}(\lambda_1)$ using SSMTool. The FRC for the rotation angle of the last pendulum ($\varphi_n$) at various orders of SSM approximation is shown in Fig.~\ref{fig:chain-frc}. We observe that the SSM-based FRCs at $\mathcal{O}(5)$ converge towards the reference solution. The reference solution points Fig.~\ref{fig:chain-frc} in here are obtained from numerical time integration of the full system~\eqref{eq:chain-eom} in ODEs~\eqref{eq:eom-2nd-nolambda}. Specifically, we sample the frequency span $[1.8,2.2]$ rad/s uniformly to obtain 21 forcing frequencies. For each sampled frequency, we initialize a time integration at the unforced equilibrium position. The integrated solution is checked for convergence towards a periodic response after each forcing cycle of period $T = 2\pi/\Omega$ according to the criterion
\begin{equation}
	\frac{||\boldsymbol{z}(iT)-\boldsymbol{z}((i-1)T)||}{||\boldsymbol{z}((i-1)T)||}<\delta,
\end{equation}
where $ \delta $ is a user defined relative tolerance for convergence, chosen be 0.001. 

All computations of this example are performed on an Intel(R) Core(TM) i7-6700HQ processor (2.60 GHz) of a laptop. The total computational time for obtaining the 21 periodic orbits via numerical time integration using \texttt{ode15s} of MATLAB is 14.6 minutes, i.e., each periodic orbit takes 42 seconds on an average. At the same time, the entire FRC via an $\mathcal{O}(5)$ SSM reduction is obtained in just two seconds. 
Additionally, we plot the linear periodic response of~\eqref{eq:chain-eom}, which overestimates the peak amplitude, as shown in Fig.~\ref{fig:chain-frc}.

\subsection{A frequency divider}
\label{sec:example-divider}
As a final example, we consider the finite-element model of a frequency divider shown in Fig.~\ref{fig:frequency-divider}. This device is composed of two cantilevered beams that are initially perpendicular and their free-ends are connected via a revolute joint, resulting in a flexible multibody system. We choose the geometry such that the first two modes of the system satisfy a near 1:2 internal resonance, namely, $\omega_2\approx2\omega_1$. When the system is periodically forced near its second natural frequency, i.e.,  $\Omega\approx\omega_2$, we expect a subharmonic periodic response with frequency $0.5\Omega\approx\omega_1$ due to modal interactions. This nonlinear feature has been exploited for the design of frequency dividers~\cite{Strachan2013,Qalandar2014}.

\begin{figure}[!ht]
\centering
\includegraphics[width=.3\textwidth]{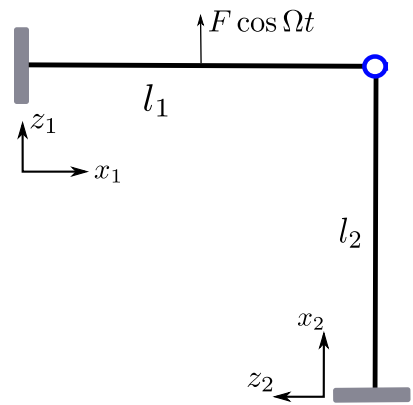}
\caption{A frequency divider made of two beams connected via a revolute joint.}
\label{fig:frequency-divider}
\end{figure}

We model the frequency divider using von K\'{a}rm\'{a}n beam elements in two-dimensional space. The finite element discretization results in three degrees of freedom per node that are associated to the axial displacement ($u$), the transverse displacement ($w$), and the rotation angle ($\phi$) (see~\cite{Jain2018} for details) . The equations of motion of the full system can be derived using the equations of motion of the two cantilevered beam substructures of length $ l_1 $ and $ l_2 $ by introducing two configuration constraints corresponding to the revolute joint as
\begin{gather}
	\begin{pmatrix}\boldsymbol{M}_1 & \boldsymbol{0}\\\boldsymbol{0}&\boldsymbol{M}_2\end{pmatrix}\begin{pmatrix}\boldsymbol{x}_1\\\boldsymbol{x}_2\end{pmatrix}+
	\begin{pmatrix}\boldsymbol{C}_1 & \boldsymbol{0}\\\boldsymbol{0}&\boldsymbol{C}_2\end{pmatrix}\begin{pmatrix}\dot{\boldsymbol{x}}_1\\\dot{\boldsymbol{x}}_2\end{pmatrix}\nonumber\\
	\qquad+\begin{pmatrix}\boldsymbol{K}_1 & \boldsymbol{0}\\\boldsymbol{0}&\boldsymbol{K}_2\end{pmatrix}\begin{pmatrix}\boldsymbol{x}_1\\\boldsymbol{x}_2\end{pmatrix}+
	\begin{pmatrix}\boldsymbol{f}_1(\boldsymbol{x}_1)\\\boldsymbol{f}_2(\boldsymbol{x}_2)\end{pmatrix}\nonumber\\
	\qquad+\boldsymbol{G}^\mathrm{T}\boldsymbol{\mu}=\epsilon\begin{pmatrix}\boldsymbol{f}_1^{\mathrm{ext}}(\Omega t)\\\boldsymbol{f}_2^{\mathrm{ext}}(\Omega t)\end{pmatrix},\label{eq:divider-diff}
\end{gather}
\begin{equation}
	\label{eq:divider-g}
	\boldsymbol{g} = \begin{pmatrix}
		g_1(\boldsymbol{x}_1,\boldsymbol{x}_2)\\ g_2(\boldsymbol{x}_1,\boldsymbol{x}_2)
	\end{pmatrix} 
	=\begin{pmatrix}
	u_1|_{x_1=l_1}+w_2|_{x_2=l_2},\\
	w_1|_{x_1=l_1}-u_2|_{x_2=l_2}
\end{pmatrix} = \mathbf{0},
\end{equation}
where the subsystems 1 and 2 denote the equations of motion of the hortizontal and vertical beams in Fig.~\ref{fig:frequency-divider}; $g_1$ and $g_2$ define the continuity constraints imposed by the revolute joint on the free ends of the two beams; $\boldsymbol{G}=\partial\boldsymbol{g}/{\partial(\boldsymbol{x}_1,\boldsymbol{x}_2)}$ is the Jacobian of the constraint equations; and $\boldsymbol{\mu}\in\mathbb{R}^2$ is the vector of Lagrange multipliers corresponding to these two constraints. 

With $h_i$, $b_i$ and $l_i$ denoting the thickness, width and the length of the beam in the $i$-th subsystem, we choose $h_1=h_2=1\,\mathrm{mm}$, $b_1=10\mathrm{mm}$, $b_2=1\,\text{mm}$, $l_1=707\,\mathrm{mm}$, and $l_2=1000\,\mathrm{mm}$. Both beams have the same material properties, namely density of $2700\times10^{-9}\,\mathrm{kg/mm^3}$, and the Young’s modulus of $70\times10^6\,\mathrm{kPa}$. We discretize the horizontal beam using $N_1=10$ finite elements and  the vertical beam using $N_2=14$ finite elements, yielding a 146-dimensional phase space $\boldsymbol{z} = (\boldsymbol{x}_1,\boldsymbol{x}_2, \dot{\boldsymbol{x}}_1,\dot{\boldsymbol{x}}_2, \boldsymbol{\mu})$ for the governing system in the first-order DAE form. 

\begin{figure}[!ht]
	\centering
	\includegraphics[width=.45\textwidth]{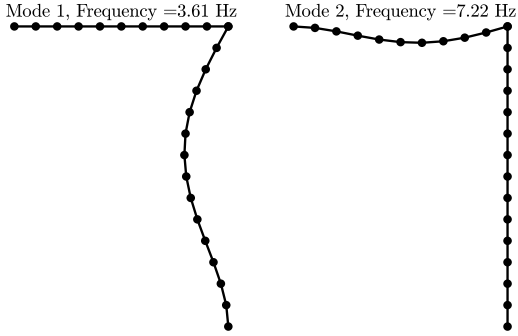}
	\caption{The first two mode shapes of the frequency divider.}
	\label{fig:frequency-divider-modeshape}
\end{figure}

For the chosen physical parameters, the first two undamped natural frequencies of the linear part of system~\eqref{eq:divider-diff} are given by
\begin{equation}
    \omega_1=22.66\,\mathrm{rad/s},\quad \omega_2=45.34\,\mathrm{rad/s}\approx2\omega_1.
\end{equation}
Thus, we obtain an internal resonance between the natural frequencies of the first two modes. The corresponding mode shapes are plotted in Fig.~\ref{fig:frequency-divider-modeshape}. We observe that both modes exhibit localized bending of either beams: mode 1 features bending of the vertical beam leaving the horizational beam undeformed, vice versa for mode 2.    

We choose a Rayleigh damping model $\boldsymbol{C}_i=\beta\boldsymbol{K}_i$ ($i=1,2)$ with damping ratio $\beta=10^{-3}/7$, resulting in the following two two pairs of eigenvalues corresponding to the first two modes of the damped linear system 
\begin{gather}
    \lambda_{1,2}=-0.04\pm22.66\mathrm{i}\approx\mathrm{i}\omega_1,\nonumber\\ \lambda_{3,4}=-0.15\pm45.34\mathrm{i}\approx\mathrm{i}\omega_2.
\end{gather}

To account for the near 1:2 internal resonance, we take the spectral subspace spanned by the first two modes as the master subspace for SSM computation. We compute the corresponding four-dimensional SSM and its reduced dynamics via SSMTool. In the unforced setting ($\epsilon=0$), we obtain the autonomous ROM on the SSM in polar coordinates up to cubic approximation as 
\begin{align}
    \dot{\rho}_1=&-{5.642\cdot10^{-5}}{\rho}_1^3 -{1.332\cdot10^{-8}}\rho_1{\rho}_2^2 \nonumber\\
    & +{\left(2.972\cdot10^{-4}\cos\sigma-8.97\cdot10^{-3}\sin\sigma\right)}\rho_1 \rho_2 \nonumber\\
    & -0.03669\rho_1,\nonumber\\
    \dot{\vartheta}_1=&2.635\cdot10^{-4}{\rho}_1^2 -1.7\cdot10^{-6}\rho_2^2 \nonumber\\
    & +{\left(-8.97\cdot10^{-3}\cos\sigma-2.972\cdot10^{-4}\sin\sigma\right)}\rho_2\nonumber\\
    & +22.66,\nonumber\\
    \dot{\rho}_2=&-5.164\cdot10^{-9}{\rho}_1^2\rho_2 \nonumber\\
    & +{\left(9.135\cdot10^{-4}\sin\sigma-3.027\cdot10^{-5}\cos\sigma\right)}{\rho}_1^2 \nonumber\\ &-7.761\cdot10^{-5}{\rho}_2^3 -0.1468\rho_2,\nonumber\\
    \dot{\vartheta}_2=&3.538\cdot10^{-4}{\rho}_2^2 -3.462\cdot10^{-7}\rho_1^2- \nonumber\\
    & {\rho}_1^2 {\left(9.135\cdot10^{-4}\cos\sigma+3.027\cdot10^{-5}\sin\sigma\right)}/{\rho_2 }\nonumber\\
    & +45.34\label{eq:red-auto-divider},
\end{align}
where $\sigma=2\vartheta_1-\vartheta_2$. Note the $\rho_1^2$ terms in the vector fields for $\rho_2$ and $\vartheta_2$ are direct results of the near 1:2 internal resonance. Importantly, these terms imply that the response of the second mode will not be trivial if the first mode $\rho_1$ is activated, namely, the energy of the first mode can be transferred to the second mode. In contrast, $\rho_1\equiv0$ is a solution family to the first mode for any nontrivial $\rho_2$ (see the first sub-equation of~\eqref{eq:red-auto-divider}). This implies that the dynamics along the first mode stays trivial if it is not activated initially, and the energy of the second mode cannot be transferred to the first mode in this case. We have observed similar phenomenon in the pendulum-slider example with a near 1:3 internal resonance. 

We again use the invariance error measure~\eqref{eq:auto-error-measure} for selecting appropriate expansion orders. As detailed in Appendix~\ref{sec:appendix-divider}, expansions at $\mathcal{O}(3)$, $\mathcal{O}(5)$ and $\mathcal{O}(7)$ will be required to meet error tolerance 0.01 in the domains $\varrho\leq14$, $\varrho\leq30$, and $\varrho\leq50$, respectively. In Appendix~\ref{sec:appendix-divider}, we also validate that the error tolerance is acceptable by comparing transient responses of the system obtained by both SSM-based ROM predictions and direct numerical integration to the full system.

We now apply a harmonic forcing at the midpoint of the horizontal beam, as illustrated in Fig.~\ref{fig:frequency-divider}, and compute the FRC via SSMTool to illustrate the mechanism of the frequency divider. Here, we restrict $\Omega\approx\omega_2$ such that the second mode is in resonance with the forcing frequency. As we will see, the internal resonance between the first two modes causes energy exchange between them, resulting in finite vibration of the vertical beam when the horizontal beam is excited.


Recall that $\rho_1=0$ results in a vanishing vector field for the $\rho_1$ variable in~Eq.~\eqref{eq:red-auto-divider}. Indeed, this also holds in the non-autonomous (forced) setting, where the reduced dynamics~\eqref{eq:red-auto-divider} is simply modified by adding $\Omega t$-dependent terms to the vector field of $(\rho_2,\vartheta_2)$ variables since only the second mode is in resonance with the forcing~\cite{part-i,part-ii}. In addition, we can factor out these $\Omega t$-dependent terms with proper coordinate transformations ($\vartheta_1=\theta_1+0.5\Omega t$ and $\vartheta_2=\theta_2+\Omega t)$, as detailed in ~\cite{part-i,part-ii}.
In the transformed system, we have a family of fixed points with $\rho_1=0$ and $\rho_2(\Omega)\neq0$, some of which turn out to be unstable depending on the forcing frequency $\Omega$. In this example, such a change in stability is accompanied with a secondary solution branch with nontrivial $\rho_1$ bifurcating from the main solution branch. Physically, this nontrivial $\rho_1$ branch, which contains stable solutions, is responsible for the observation of finite amplitude vibrations in the vertical beam at half of the forcing frequency when the horizontal beam is forced near its resonance.

\begin{figure}[!ht]
	\centering
	\includegraphics[width=.5\textwidth]{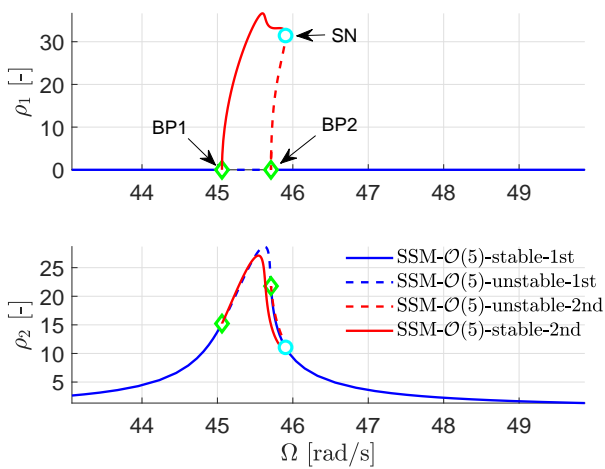}
	\caption{Forced response curves in $(\rho_1,\rho_2)$ of the frequency divider with $F=0.1$. Here and in Fig.~\ref{fig:divider-frc-phy}, the `1st' and `2nd' in the figure legend denote the first (main) and secondary solution branches.}
	\label{fig:divider-frc-rho}
\end{figure}

Similarly to the pendulum slider example, we obtain the FRC directly by analyzing the reduced-dynamics of the 4-dimensional SSM in a normal-form parametrization style, where the periodic orbit is simply given by the solution to a fixed point problem (see~\cite{part-i} for details). Choosing a forcing amplitude of $F=0.1$ in this example, we compute the SSM upto $\mathcal{O}(5)$ via SSMTool and analyze the reduced dynamics on the SSM to obtain the two solution branches, as shown in Fig.~\ref{fig:divider-frc-rho}. We perform the numerical continuation via the~\texttt{ep}-toolbox of~\textsc{coco}~\cite{dankowicz2013recipes} for detecting the branch points along the main solution branch with $\rho_1\equiv0$ and switching to the secondary solution branch. 

\begin{figure}[!ht]
	\centering
	\includegraphics[width=.45\textwidth]{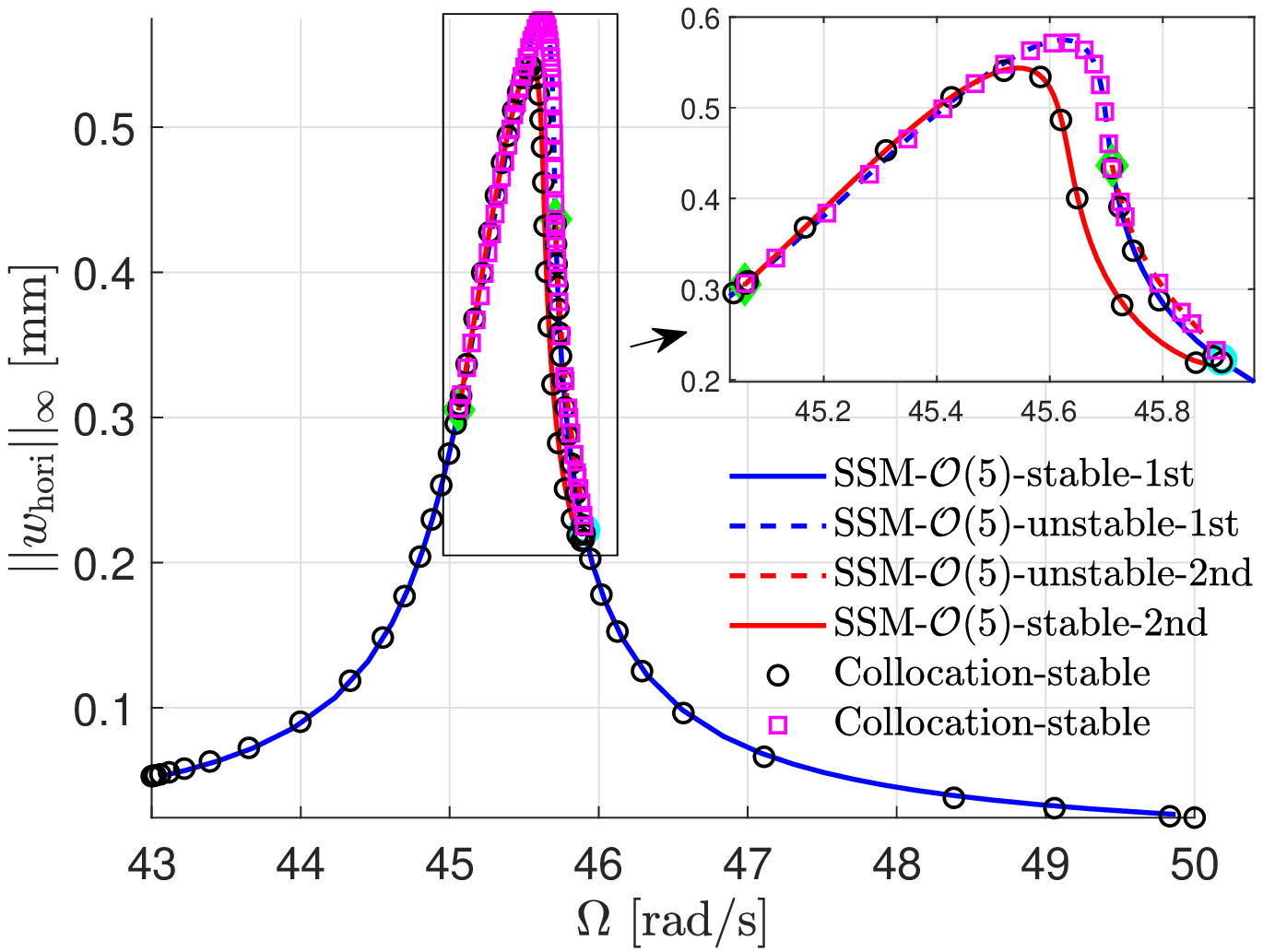}
	\includegraphics[width=.45\textwidth]{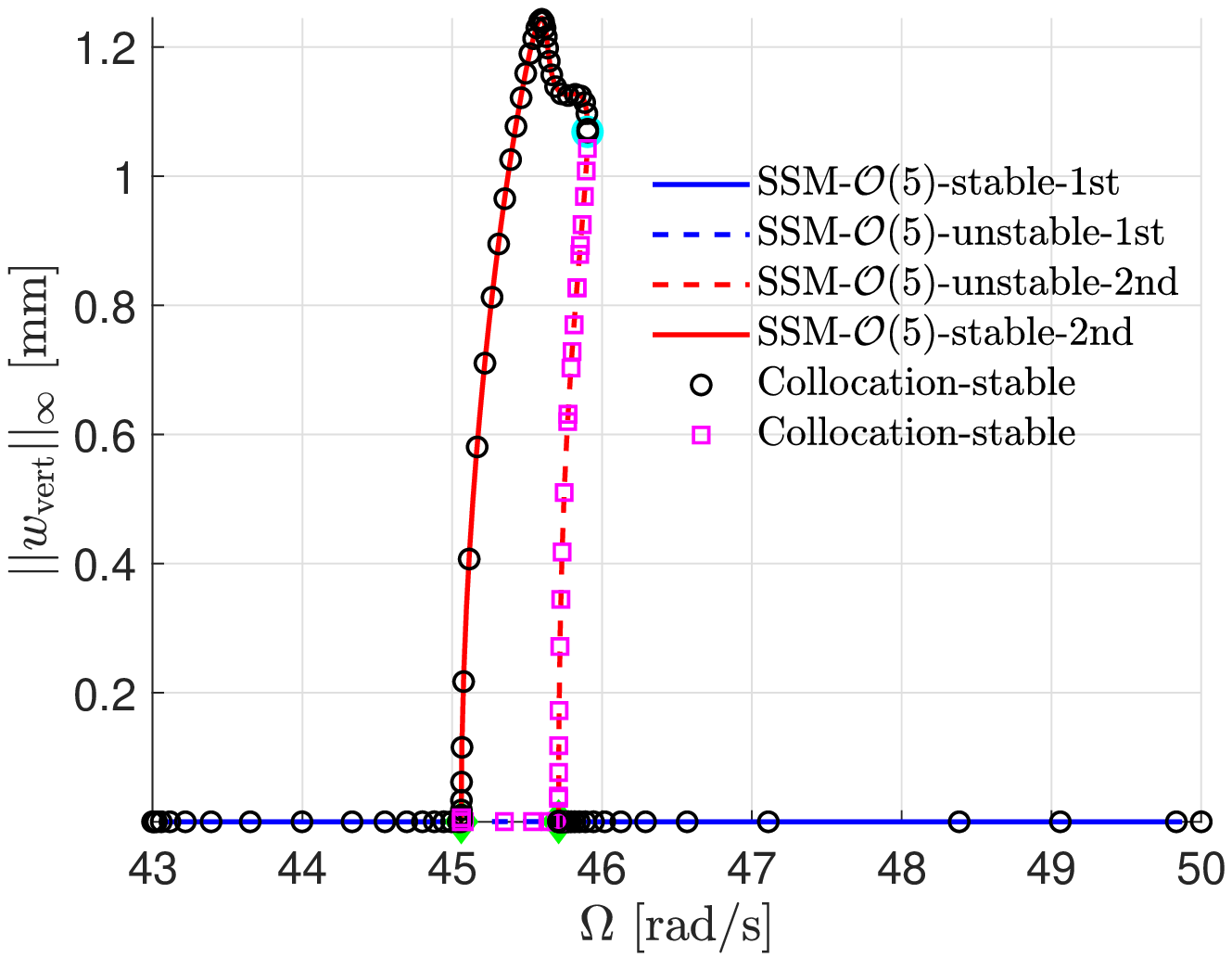}
	\caption{Forced response curves in vibration amplitudes of the transverse deflections of the horizontal (upper panel) and the vertical (lower panel) beams. Here $w_\mathrm{hori}$ and $w_\mathrm{vert}$ are transverse deflection at the midpoint the the horizontal and vertical beams of the divider.}
	\label{fig:divider-frc-phy}
\end{figure}
As shown in Fig.~\ref{fig:divider-frc-rho}, two branch points BP1 and BP2 are detected on the main branch (blue curve) at $\Omega_{\mathrm{BP1}}=45.06$ and $\Omega_{\mathrm{BP2}}=45.71$. The solution along the main branch becomes unstable between the two branch points, i.e., for $\Omega\in[\Omega_{\mathrm{BP1}},\Omega_{\mathrm{BP2}}]$. We switch branches at BP1 to continue the solution along the secondary branch (red curve) with $\rho_1\neq0$. Indeed, the peak for the polar amplitude $\rho_1$ along the secondary branch is higher than that for $\rho_2$. Given that only second mode is excited by external forcing, this indicates transfer of energy from the second mode to the first mode. We further obtain a saddle-node bifurcation (SN) along the secondary branch which results in change of solution stability along the branch. Finally, the secondary branch merges with the main branch at the other branch point (BP2). Note that the fundamental frequency of a periodic response on the secondary branch is $\Omega/2$ as the period of the response is doubled when we switch from the main branch to the secondary branch. Interestingly, we obtain stable solutions along the secondary branch in the frequency range that main branch's response becomes unstable. This guarantees the experimental observability of the secondary branch's response for $\Omega\in[\Omega_{\mathrm{BP1}},\Omega_{\mathrm{BP2}}]$ in contrast to the response on along the main branch.

To demonstrate the accuracy and efficiency of the predictions from our SSM-based ROM, we calculate the FRC of the full system~\eqref{eq:divider-diff}-\eqref{eq:divider-g} using the \texttt{po}-toolbox of \textsc{coco}. We discretize a periodic orbit along the main branch using a mesh with 10 time intervals, 5 base points and 4 collocation nodes in each interval. We set the maximum continuation step size and allowed residual for the predictor in the continuation algorithm to be 100 and 1000. Two periodic-doubling bifurcation points are detected along the main branch, which match well with the two branch points shown in Fig.~\ref{fig:divider-frc-rho}. We then switch to the secondary branch of periodic obits with doubled time period. To maintain the mesh refinement upto switching to the secondary branch, we now discretize a periodic orbit using a mesh with 20 intervals. Accordingly, we also increase the maximum allowed residual for predictor step to 10,000. We carefully choose these solver settings to ensure that the computational time of the collocation method using \texttt{po} is reasonable.

As shown in Fig.~\ref{fig:divider-frc-phy}, the FRC of the full system match with the ones predicted by the SSM-based ROM. Here, the computations of FRCs using \textsc{coco} are performed on a remote Intel Xeon E3-1585Lv5 processor (3.0-3.7 GHz) on the ETH Euler cluster. The FRC computation for full system using the collocation method took about 27 hours. In contrast, the computation of FRCs using $\mathcal{O}(5)$ SSM-reduction, performed on an Intel(R) Core(TM) i7-6700HQ processor (2.60 GHz) of a laptop, took only 10 seconds.

One may note the similarity between the FRC of $||w_\mathrm{hori}||_\infty$ and the one of $\rho_2$, and the similarity between the FRC of $||w_\mathrm{vert}||_\infty$ and the one of $\rho_1$. Such similarities can be explained by the fact that midpoints of the horizontal and vertical beam are (nearly) located at the peak response of the second and first bending modes, respectively (cf.~Fig.~\ref{fig:frequency-divider-modeshape}).

\section{Conclusion}
\label{sec:conclusion}
Using the theory of spectral submanifolds (SSMs), we have developed reduced-order models (ROMs) for nonlinear mechanical systems with configuration constraints and possible internal resonances. We have used these SSM-based ROMs to extract damped backbone curves, predict the transient response of the unforced system, extract the periodic forced response curves (FRCs) under external harmonic forcing, and predict the bifurcations of steady-state periodic response under the variation of excitation frequency and amplitude. We have demonstrated our reduction technique over several examples from rigid as well as flexible multibody dynamics, including a chain of pendulums linked to a slider and a finite-element model of a frequency divider featuring internal resonance.

\begin{sloppypar}
We also made an open-source implementation of all methods and examples discussed here in the software package SSMTool~\cite{ssmtool21}, which supports automated computations of SSMs and their associated reduced dynamics. SSMTool also enables the detection and analysis of quasi-periodic responses of constrained systems as well. Typically, quasi-periodic solutions are born out of Hopf bifurcations along an FRC, as shown in~\cite{part-ii}. Furthermore, we have illustrated a reformulation technique that transforms systems with non-polynomial nonlinearities into systems with only polynomial nonlinearties, enabling the application of SSMTool to systems with non-polynomial nonlinearities. We further introduced invariance error measure that enables the a posteriori estimation of the convergence domain of SSM approximations and allows reliable reduction without the need for  validations based on full system simulations.

In the forced setting, we have restricted to the leading-order approximation of the non-autonomous part of the SSMs. For larger forcing amplitudes, however, higher order non-autonomous terms are important as we observed high invariance errors and differences with respect to the full system simulations (cf.~upper-left panel of Fig.~\ref{fig:3Dos-x1-frc}). Flexible multibody systems undergoing combination of overall motion and large deformations were not considered in this work but are of great interest. All these development are currently underway and will be reported elsewhere. Further areas of application not considered in the current work include piezoelectric structures~\cite{lazarus2012finite}, incompressible flows~\cite{montlaur2012high}, computational electromagnetics~\cite{cortes2018systems}, and power grids~\cite{liu2019solving}. 
\end{sloppypar}

\appendix
\section{Proof of Theorem~\ref{th:dae2ode}}
\label{sec:appendix-dae2ode}
This proof uses Maggi's reformulation of constrained systems, as presented in~\cite{laulusa2008review}. With an initial condition $\boldsymbol{x}_0$ that satisfies $\boldsymbol{g}(\boldsymbol{x}_0)=0$, the algebraic constraints $\boldsymbol{g}=\boldsymbol{0}$ in~\eqref{eq:eom-second-dae} can be extended by $\dot{\boldsymbol{g}}+\alpha\boldsymbol{g}=\boldsymbol{0}$, or equivalently
\begin{equation}
\label{eq:g-dot}
    \boldsymbol{G}(\boldsymbol{x})\dot{\boldsymbol{x}}+\alpha\boldsymbol{g}(\boldsymbol{x})=0,
\end{equation}
where $\alpha\in\mathbb{R}^+$ is a stabilization parameter. We introduce $n-n_\mathrm{c}$ \emph{kinematic characteristics}, which are also called \emph{generalized speeds}~\cite{laulusa2008review}, denoted by $\boldsymbol{e}$, that satisfy
\begin{equation}
\label{eq:define-e}
    \begin{pmatrix}\boldsymbol{G}_{(n_\mathrm{c}\times n)}(\boldsymbol{x})\\\widecheck{\boldsymbol{G}}_{((n-n_\mathrm{c})\times n)}(\boldsymbol{x})\end{pmatrix}\dot{\boldsymbol{x}}+\begin{pmatrix}\alpha\boldsymbol{g}_{(n_\mathrm{c})}(\boldsymbol{x})\\\boldsymbol{0}_{(n-n_\mathrm{c})}\end{pmatrix}=\begin{pmatrix}\boldsymbol{0}_{(n_\mathrm{c})}\\\boldsymbol{e}_{(n-n_\mathrm{c})}
    \end{pmatrix}.
\end{equation}
Here and below, the bracketed subscripts denote the size of a matrix or vector. Recall that $\boldsymbol{G}\in\mathbb{R}^{n_\mathrm{c}\times n}$ and it is of full rank. Thus one can choose $\widecheck{\boldsymbol{G}}$ such that the matrix formed by $\boldsymbol{G}$ and $\widecheck{\boldsymbol{G}}$ defines an invertible linear transformation. With this inverse
\begin{equation}
    \begin{pmatrix}\boldsymbol{G}_{(n_\mathrm{c}\times n)}(\boldsymbol{x})\\\widecheck{\boldsymbol{G}}_{((n-n_\mathrm{c})\times n)}(\boldsymbol{x})\end{pmatrix}^{-1}=\begin{pmatrix}\widecheck{\boldsymbol{\Gamma}}_{(n_\mathrm{c}\times n)}(\boldsymbol{x}) & \boldsymbol{\Gamma}_{(n\times (n-n_\mathrm{c}))}(\boldsymbol{x})\end{pmatrix},
\end{equation}
the generalized velocities $\dot{\boldsymbol{x}}$ can be readily expressed in terms of the kinematic characteristics as
\begin{equation}
\label{eq:xdot}
    \dot{\boldsymbol{x}}=\boldsymbol{\Gamma}(\boldsymbol{x})\boldsymbol{e}-\alpha\widecheck{\boldsymbol{\Gamma}}(\boldsymbol{x})\boldsymbol{g}(\boldsymbol{x}).
\end{equation}
From now on, we do not show the $\boldsymbol{x}$ dependencies and the sizes of matrices and vectors explicitly for compactness. 
We note that
\begin{gather}
    \begin{pmatrix}\boldsymbol{G}\\\widecheck{\boldsymbol{G}}\end{pmatrix}\begin{pmatrix}\widecheck{\boldsymbol{\Gamma}} & \boldsymbol{\Gamma}\end{pmatrix}=\begin{pmatrix}\boldsymbol{G}\widecheck{\boldsymbol{\Gamma}} & \boldsymbol{G}\boldsymbol{\Gamma}\\\widecheck{\boldsymbol{G}}\widecheck{\boldsymbol{\Gamma}} & \widecheck{\boldsymbol{G}}\boldsymbol{\Gamma}\end{pmatrix}=\begin{pmatrix}\boldsymbol{I} & \boldsymbol{0} \\\boldsymbol{0} & \boldsymbol{I}\end{pmatrix},\nonumber\\
    \begin{pmatrix}\widecheck{\boldsymbol{\Gamma}} & \boldsymbol{\Gamma}\end{pmatrix}\begin{pmatrix}\boldsymbol{G}\\\widecheck{\boldsymbol{G}}\end{pmatrix}=\widecheck{\boldsymbol{\Gamma}}\boldsymbol{G}+\boldsymbol{\Gamma}\widecheck{\boldsymbol{G}}=\boldsymbol{I}.\label{eq:inverse-formula}
\end{gather}
Since $\boldsymbol{G}\boldsymbol{\Gamma}=\boldsymbol{0}$, $\boldsymbol{\Gamma}$ spans the null space of the constraint matrix. Differentiating~\eqref{eq:xdot}, the generalized accelerations $\ddot{\boldsymbol{x}}$ can also be expressed in terms of
the kinematic characteristics
\begin{equation}
\label{eq:xddot}
    \ddot{\boldsymbol{x}}=\boldsymbol{\Gamma}\dot{\boldsymbol{e}}+\dot{\boldsymbol{\Gamma}}\boldsymbol{e}-\alpha\widecheck{\boldsymbol{\Gamma}}\boldsymbol{G}\dot{\boldsymbol{x}}-\alpha\dot{\widecheck{\boldsymbol{\Gamma}}}\boldsymbol{g}.
\end{equation}
Substituting~\eqref{eq:xdot} and~\eqref{eq:xddot} into the differential equations in~\eqref{eq:eom-second-dae}, premultiplying the equations by $\boldsymbol{\Gamma}^\mathrm{T}$ and utilizing the fact that $\boldsymbol{G}\boldsymbol{\Gamma}=\boldsymbol{0}$, we obtain governing equations expressed in terms of the kinematic characteristics as follows
\begin{align}
&\boldsymbol{\Gamma}^\mathrm{T}\boldsymbol{M}\boldsymbol{\Gamma}\dot{\boldsymbol{e}}
+(\boldsymbol{\Gamma}^\mathrm{T}\boldsymbol{C}-\alpha\boldsymbol{\Gamma}^\mathrm{T}\boldsymbol{M}\widecheck{\boldsymbol{\Gamma}}\boldsymbol{G})\dot{\boldsymbol{x}}+\boldsymbol{\Gamma}^\mathrm{T}\boldsymbol{K}\boldsymbol{x}\nonumber\\
&\quad +\boldsymbol{\Gamma}^\mathrm{T}\boldsymbol{f}+\boldsymbol{\Gamma}^\mathrm{T}\boldsymbol{M}\dot{\boldsymbol{\Gamma}}\boldsymbol{e}-\alpha\boldsymbol{\Gamma}^\mathrm{T}\boldsymbol{M}\dot{\widecheck{\boldsymbol{\Gamma}}}\boldsymbol{g}=\epsilon \boldsymbol{\Gamma}^\mathrm{T}\boldsymbol{f}^{\mathrm{ext}}.\label{eq:eom-e}
\end{align}
Therefore, the original equations of motion can be reformulated without separate equations for constraints as follows
\begin{align}
\label{eq:odesmaggi}
    &\begin{pmatrix}\boldsymbol{I} &\boldsymbol{0}\\ \boldsymbol{\Gamma}^\mathrm{T}\boldsymbol{C}-\alpha\boldsymbol{\Gamma}^\mathrm{T}\boldsymbol{M}\widecheck{\boldsymbol{\Gamma}}\boldsymbol{G} & \boldsymbol{\Gamma}^\mathrm{T}\boldsymbol{M}\boldsymbol{\Gamma}
    \end{pmatrix}\begin{pmatrix}\dot{\boldsymbol{x}}\\\dot{\boldsymbol{e}}\end{pmatrix}= \begin{pmatrix}
    -\alpha\widecheck{\boldsymbol{\Gamma}}\boldsymbol{G}_0 & \boldsymbol{\Gamma}\\
    -\boldsymbol{\Gamma}^\mathrm{T}\boldsymbol{K} & \boldsymbol{0}
    \end{pmatrix}\begin{pmatrix}
    \boldsymbol{x}\\\boldsymbol{e}
    \end{pmatrix}+\nonumber\\
     & \begin{pmatrix}
    -\alpha\widecheck{\boldsymbol{\Gamma}}\boldsymbol{g}_{\mathrm{nl}}\\ -\boldsymbol{\Gamma}^\mathrm{T}\boldsymbol{f}-\boldsymbol{\Gamma}^\mathrm{T}\boldsymbol{M}\dot{\boldsymbol{\Gamma}}\boldsymbol{e}+\alpha\boldsymbol{\Gamma}^\mathrm{T}\boldsymbol{M}\dot{\widecheck{\boldsymbol{\Gamma}}}\boldsymbol{g}
    \end{pmatrix}+\epsilon\begin{pmatrix}
    \boldsymbol{0}\\\boldsymbol{\Gamma}^\mathrm{T}\boldsymbol{f}^{\mathrm{ext}}
    \end{pmatrix}.
\end{align}
In the case of linear configuration constraints, we have $\boldsymbol{G}=\boldsymbol{G}_0$ and we can choose a constant matrix $\widecheck{\boldsymbol{G}}$. Then both $\boldsymbol{\Gamma}$ and $\widecheck{\boldsymbol{\Gamma}}$ are constant matrices as well. Consequently, the two coefficient matrices in the above system of equations are also constant.

\section{Spectrum of linear ODEs}
\label{sec:spectrum-ode}
Next, we study the spectrum of the system~\eqref{eq:odesmaggi} linearized around the origin. Let the constant parts of $\boldsymbol{\Gamma}$ and $\widecheck{\boldsymbol{\Gamma}}$ be $\boldsymbol{\Gamma}_0$ and $\widecheck{\boldsymbol{\Gamma}}_0$. The equations of motion of the linearized system are
\begin{align}
    &\begin{pmatrix}\boldsymbol{I} &\boldsymbol{0}\\ \boldsymbol{\Gamma}_0^\mathrm{T}\boldsymbol{C}-\alpha\boldsymbol{\Gamma}_0^\mathrm{T}\boldsymbol{M}\widecheck{\boldsymbol{\Gamma}}_0\boldsymbol{G}_0 & \boldsymbol{\Gamma}_0^\mathrm{T}\boldsymbol{M}\boldsymbol{\Gamma}_0
    \end{pmatrix}\begin{pmatrix}\dot{\boldsymbol{x}}\\\dot{\boldsymbol{e}}\end{pmatrix}\nonumber\\
    &\qquad=\begin{pmatrix}
    -\alpha\widecheck{\boldsymbol{\Gamma}}_0\boldsymbol{G}_0 & \boldsymbol{\Gamma}_0\\
    -\boldsymbol{\Gamma}_0^\mathrm{T}\boldsymbol{K} & \boldsymbol{0}
    \end{pmatrix}\begin{pmatrix}
    \boldsymbol{x}\\\boldsymbol{e}
    \end{pmatrix}.\label{eq:lin-x-e}
\end{align}
We assume the linearized system has an eigensolution as follows
\begin{equation}
    \begin{pmatrix}
    \boldsymbol{x}\\\boldsymbol{e}
    \end{pmatrix}=\begin{pmatrix}
    \widehat{\boldsymbol{x}}\\\widehat{\boldsymbol{e}}
    \end{pmatrix}e^{\lambda t}.
\end{equation}
Substituting this solution into~\eqref{eq:lin-x-e} yields
\begin{gather}
    \lambda\widehat{\boldsymbol{x}}=-\alpha\widecheck{\boldsymbol{\Gamma}}_0\boldsymbol{G}_0\widehat{\boldsymbol{x}}+\boldsymbol{\Gamma}_0\widehat{\boldsymbol{e}},\label{eq:hatx}\\
    \lambda(\boldsymbol{\Gamma}_0^\mathrm{T}\boldsymbol{C}-\alpha\boldsymbol{\Gamma}_0^\mathrm{T}\boldsymbol{M}\widecheck{\boldsymbol{\Gamma}}_0\boldsymbol{G}_0)\widehat{\boldsymbol{x}}+\lambda\boldsymbol{\Gamma}_0^\mathrm{T}\boldsymbol{M}\boldsymbol{\Gamma}_0\widehat{\boldsymbol{e}}=-\boldsymbol{\Gamma}_0^\mathrm{T}\boldsymbol{K}\hat{\boldsymbol{x}}\label{eq:hate}.
\end{gather}
We obtain from~\eqref{eq:hatx} that $ \boldsymbol{\Gamma}_0\widehat{\boldsymbol{e}}=\lambda\widehat{\boldsymbol{x}}+\alpha\widecheck{\boldsymbol{\Gamma}}_0\boldsymbol{G}_0\widehat{\boldsymbol{x}}$. We substitute the expression of $\boldsymbol{\Gamma}_0\widehat{\boldsymbol{e}}$ into~\eqref{eq:hate} to obtain
\begin{equation}
\label{eq:lamd-1}
    \boldsymbol{\Gamma}_0^\mathrm{T}(\lambda^2\boldsymbol{M}\widehat{\boldsymbol{x}}+\lambda\boldsymbol{C}\widehat{\boldsymbol{x}}+\boldsymbol{K}\widehat{\boldsymbol{x}})=\boldsymbol{0}.
\end{equation}
Given that $\widecheck{\boldsymbol{\Gamma}}_0\boldsymbol{G}_0+\boldsymbol{\Gamma}_0\widecheck{\boldsymbol{G}}_0=\boldsymbol{I}$ (see~\eqref{eq:inverse-formula}), we obtain from~\eqref{eq:hatx} that
\begin{equation}
\label{eq:lamd-2}
    (\lambda+\alpha)\widehat{\boldsymbol{x}}=\alpha\boldsymbol{\Gamma}_0\widecheck{\boldsymbol{G}}_0\widehat{\boldsymbol{x}}+\boldsymbol{\Gamma}_0\widehat{\boldsymbol{e}}.
\end{equation}

Now we analyze the solutions to~\eqref{eq:lamd-1} and~\eqref{eq:lamd-2} depending on whether $\lambda+\alpha$ is zero or not. In the case that $\lambda+\alpha\neq0$, we infer from~\eqref{eq:lamd-2} that $\widehat{\boldsymbol{x}}\in\mathrm{Range}(\boldsymbol{\Gamma}_0)$ and then we can introduce $\tilde{\boldsymbol{x}}\in\mathbb{C}^{n-n_\mathrm{c}}$ such that
\begin{equation}
\label{eq:widehatx}
    \widehat{\boldsymbol{x}}=\boldsymbol{\Gamma}_0\tilde{\boldsymbol{x}}.
\end{equation}
Substituting~\eqref{eq:widehatx} into~\eqref{eq:lamd-1}, we obtain
\begin{equation}
    (\lambda^2\boldsymbol{\Gamma}_0^\mathrm{T}\boldsymbol{M}\boldsymbol{\Gamma}_0+\lambda\boldsymbol{\Gamma}_0^\mathrm{T}\boldsymbol{C}\boldsymbol{\Gamma}_0+\boldsymbol{\Gamma}_0^\mathrm{T}\boldsymbol{K}\boldsymbol{\Gamma}_0)\tilde{\boldsymbol{x}}=\boldsymbol{0}.
\end{equation}
Therefore, we obtain $2(n-n_\mathrm{c})$ eigenvalues from a system with mass matrix $\boldsymbol{\Gamma}_0^\mathrm{T}\boldsymbol{M}\boldsymbol{\Gamma}_0$, damping matrix $\boldsymbol{\Gamma}_0^\mathrm{T}\boldsymbol{C}\boldsymbol{\Gamma}_0$ and stiffness matrix $\boldsymbol{\Gamma}_0^\mathrm{T}\boldsymbol{K}\boldsymbol{\Gamma}_0$.

In the special of $\lambda+\alpha=0$, we obtain from~\eqref{eq:lamd-2} that $\widehat{\boldsymbol{e}}=-\alpha\widecheck{\boldsymbol{G}}_0\widehat{\boldsymbol{x}}$. Given that $\boldsymbol{G}_0\boldsymbol{\Gamma}_0=\boldsymbol{0}$ (cf.~\eqref{eq:inverse-formula}), we infer from~\eqref{eq:lamd-1} that
\begin{equation}
    (\alpha^2\boldsymbol{M}-\alpha\boldsymbol{C}+\boldsymbol{K})\widehat{\boldsymbol{x}}\in\mathrm{range}(\boldsymbol{G}_0^\mathrm{T}).
\end{equation}
Since $\alpha\in\mathbb{R}^+$ is arbitrary and can be chosen different from any eigenvalues of the system ($\boldsymbol{M},\boldsymbol{C},\boldsymbol{K}$), the coefficient matrix $(\alpha^2\boldsymbol{M}-\alpha\boldsymbol{C}+\boldsymbol{K})$ is invertible, and then there are $m$ linearly independent solutions of $\widehat{\boldsymbol{x}}$.

In summary, the eigenvalues and eigenvectors of the linearized system~\eqref{eq:lin-x-e} are given as below
\begin{enumerate}[label=(\alph*)]
\item $i=1,\cdots,2(n-n_\mathrm{c})$:
\begin{gather}
    (\lambda_i^2\boldsymbol{\Gamma}_0^\mathrm{T}\boldsymbol{M}\boldsymbol{\Gamma}_0+\lambda\boldsymbol{\Gamma}_0^\mathrm{T}\boldsymbol{C}\boldsymbol{\Gamma}_0+\boldsymbol{\Gamma}_0^\mathrm{T}\boldsymbol{K}\boldsymbol{\Gamma}_0)\tilde{\boldsymbol{x}}_i=\boldsymbol{0},\nonumber\\
    \widehat{\boldsymbol{x}}_i=\boldsymbol{\Gamma}_0\tilde{\boldsymbol{x}}_i,\,\widehat{\boldsymbol{e}}_i=(\lambda_i+\alpha)\tilde{\boldsymbol{x}}_i-\alpha\widecheck{\boldsymbol{G}}\widehat{\boldsymbol{x}}_i,\label{eq:group-a}
\end{gather}
\item $i=2(n-n_\mathrm{c})+1,\cdots,2(n-n_\mathrm{c})+n_\mathrm{c}$: $\lambda_{i}=-\alpha$ and
\begin{equation}
\label{eq:group-b}
    \begin{pmatrix}
    \widehat{\boldsymbol{x}}_{2(n-n_\mathrm{c})+1} & \cdots & \widehat{\boldsymbol{x}}_{2(n-n_\mathrm{c})+n_\mathrm{c}}\\
    \widehat{\boldsymbol{e}}_{2(n-n_\mathrm{c})+1} & \cdots & \widehat{\boldsymbol{e}}_{2(n-n_\mathrm{c})+n_\mathrm{c}}
    \end{pmatrix}=\begin{pmatrix}
    \boldsymbol{G}_0^\mathrm{T}\\-\alpha\widecheck{\boldsymbol{G}}_0\boldsymbol{G}_0^\mathrm{T}
    \end{pmatrix}.
\end{equation}
\end{enumerate}

\section{Spectrum of linear DAEs}
\label{sec:spectrum-dae}
The linear part of~\eqref{eq:eom-second-dae} in the limit $\epsilon=0$ is given by
\begin{equation}
\label{eq:eom-second-dae-lin}
\boldsymbol{M}\ddot{\boldsymbol{x}}+\boldsymbol{C}\dot{\boldsymbol{x}}+\boldsymbol{K}\boldsymbol{x}+\boldsymbol{G}_0^\mathrm{T}\boldsymbol{\mu}=\boldsymbol{0}, \quad\boldsymbol{G}_0\boldsymbol{x}=\boldsymbol{0}.
\end{equation}
Consider an eigensolution
\begin{equation}
    \begin{pmatrix}
    \boldsymbol{x}\\\boldsymbol{\mu}
    \end{pmatrix}=\begin{pmatrix}
    \widehat{\boldsymbol{x}}\\\widehat{\boldsymbol{\mu}}
    \end{pmatrix}e^{\lambda t}.
\end{equation}
Substituting the solution above into~\eqref{eq:eom-second-dae-lin} yields
\begin{equation}
\label{eq:eom-second-dae-lin-eig}
\lambda^2\boldsymbol{M}\widehat{\boldsymbol{x}}+\lambda\boldsymbol{C}\widehat{\boldsymbol{x}}+\boldsymbol{K}\widehat{\boldsymbol{x}}+\boldsymbol{G}_0^\mathrm{T}\widehat{\boldsymbol{\mu}}=\boldsymbol{0}, \quad\boldsymbol{G}_0\widehat{\boldsymbol{x}}=\boldsymbol{0}.
\end{equation}
Since $\widehat{\boldsymbol{x}}\in\mathrm{kernel}(\boldsymbol{G}_0)$ and $\boldsymbol{G}_0$ is of full rank, we can find $\boldsymbol{\Gamma}_0\in\mathbb{R}^{n\times(n-n_\mathrm{c})}$ (cf.~\eqref{eq:inverse-formula}) such that $\boldsymbol{G}_0\boldsymbol{\Gamma}_0=\boldsymbol{0}$. Then we can introduce $\tilde{\boldsymbol{x}}\in\mathbb{C}^{n-n_\mathrm{c}}$ such that $\widehat{\boldsymbol{x}}=\boldsymbol{\Gamma}_0\tilde{\boldsymbol{x}}$ and then the second sub-equation in~\eqref{eq:eom-second-dae-lin-eig} is satisfied. We substitute $\widehat{\boldsymbol{x}}=\boldsymbol{\Gamma}_0\tilde{\boldsymbol{x}}$ into the first sub-equation and premultiply the equation by $\boldsymbol{\Gamma}_0^\mathrm{T}$ to obtain
\begin{equation}
(\lambda^2\boldsymbol{\Gamma}_0^\mathrm{T}\boldsymbol{M}\boldsymbol{\Gamma}_0+\lambda\boldsymbol{\Gamma}_0^\mathrm{T}\boldsymbol{C}\boldsymbol{\Gamma}_0+\boldsymbol{\Gamma}_0^\mathrm{T}\boldsymbol{K}\boldsymbol{\Gamma}_0)\tilde{\boldsymbol{x}}=\boldsymbol{0}.
\end{equation}
In the case of $\tilde{\boldsymbol{x}}\neq\boldsymbol{0}$, the spectrum of the equation above is composed of $2(n-n_\mathrm{c})$ eigenvalues from a system with mass matrix $\boldsymbol{\Gamma}_0^\mathrm{T}\boldsymbol{M}\boldsymbol{\Gamma}_0$, damping matrix $\boldsymbol{\Gamma}_0^\mathrm{T}\boldsymbol{C}\boldsymbol{\Gamma}_0$ and stiffness matrix $\boldsymbol{\Gamma}_0^\mathrm{T}\boldsymbol{K}\boldsymbol{\Gamma}_0$ (cf.~\eqref{eq:group-a}).

In the case of $\tilde{\boldsymbol{x}}=\boldsymbol{0}$, we have $\widehat{\boldsymbol{x}}=\boldsymbol{0}$ and~\eqref{eq:eom-second-dae-lin-eig} becomes
\begin{equation}
\label{eq:lamd-inf-1}
\lambda^2\boldsymbol{M}\widehat{\boldsymbol{x}}+\lambda\boldsymbol{C}\widehat{\boldsymbol{x}}+\boldsymbol{G}_0^\mathrm{T}\widehat{\boldsymbol{\mu}}=\boldsymbol{0}.
\end{equation}
Provided that $\lambda\in\mathbb{C}$, the equation above is reduced to $\boldsymbol{G}_0^\mathrm{T}\widehat{\boldsymbol{\mu}}=\boldsymbol{0}$, from which we obtain $\boldsymbol{\mu}=\boldsymbol{0}$ because $\boldsymbol{G}_0$ is of full rank. However, $(\widehat{\boldsymbol{x}},\widehat{\boldsymbol{\mu}})=(\boldsymbol{0},\boldsymbol{0})$ is not an eigenvector. We can rewrite~\eqref{eq:lamd-inf-1} as $\boldsymbol{G}_0^\mathrm{T}\widehat{\boldsymbol{\mu}}/\lambda^2=\boldsymbol{0}$, from which we infer that $\widehat{\boldsymbol{\mu}}$ can be arbitrary non-zeros when $|\lambda|=\infty$. So the system has eigenvalues with infinite magnitude. 

Next we explore how many such eigenvalues of infinite magnitude the system has. We note that~\eqref{eq:eom-second-dae-lin} is equivalent to the linear part of~\eqref{eq:full-first} when $\epsilon=0$. In~\eqref{eq:full-first}, the equations of motion are in the first order form with two matrices $\boldsymbol{A}$ and $\boldsymbol{B}$. Since we have assumed that the the matrix pencil $\lambda\boldsymbol{B}-\boldsymbol{A}$ is regular, namely, $\mathrm{det}(\lambda\boldsymbol{B}-\boldsymbol{A})\neq0$ for some $\lambda\in\mathbb{C}$, the system has $3n_\mathrm{c}$ such infinite eigenvalues~\cite{benner2015numerical}.

In summary, the eigenvalues and eigenvectors of the system~\eqref{eq:eom-second-dae-lin} are given by
\begin{enumerate}[resume,label=(\alph*)]
\item $i=1,\cdots,2(n-n_\mathrm{c})$:
\begin{gather}
(\lambda_i^2\boldsymbol{\Gamma}_0^\mathrm{T}\boldsymbol{M}\boldsymbol{\Gamma}_0+\lambda\boldsymbol{\Gamma}_0^\mathrm{T}\boldsymbol{C}\boldsymbol{\Gamma}_0+\boldsymbol{\Gamma}_0^\mathrm{T}\boldsymbol{K}\boldsymbol{\Gamma}_0)\tilde{\boldsymbol{x}}_i=\boldsymbol{0},\nonumber\\
\widehat{\boldsymbol{x}}_i=\boldsymbol{\Gamma}_0\tilde{\boldsymbol{x}}_i,\quad\lambda_i^2\boldsymbol{M}\widehat{\boldsymbol{x}}_i+\lambda_i\boldsymbol{C}\widehat{\boldsymbol{x}}_i+\boldsymbol{K}\widehat{\boldsymbol{x}}_i+\boldsymbol{G}_0^\mathrm{T}\widehat{\boldsymbol{\mu}}_i=\boldsymbol{0}\label{eq:group-c}
\end{gather}
\item $i=2(n-n_\mathrm{c}),\cdots,2n+n_\mathrm{c}$:
\begin{equation}
\label{eq:group-d}
    |\lambda_i|=\infty,\quad\widehat{\boldsymbol{x}}_i=\boldsymbol{0},\,\widehat{\boldsymbol{\mu}}_i\neq\boldsymbol{0}.
\end{equation}
\end{enumerate}

\section{Index-1 formulation with stabilization}
\label{sec:index-1}
Provided that we have a consistent initial condition $(\boldsymbol{x}_0,\dot{\boldsymbol{x}}_0)$ that satisfies
\begin{equation}
    \boldsymbol{g}(\boldsymbol{x}_0)=\boldsymbol{0},\quad \boldsymbol{G}(\boldsymbol{x}_0)\dot{\boldsymbol{x}}_0=\boldsymbol{0},
\end{equation}
the algebraic constraints $\boldsymbol{g}=\boldsymbol{0}$ in~\eqref{eq:eom-second-dae} can be replaced by
\begin{equation}
\label{eq:g-ddot}
    \ddot{\boldsymbol{g}}+\alpha\dot{\boldsymbol{g}}+\beta\boldsymbol{g}=\boldsymbol{0},
\end{equation}
where $\alpha,\beta\in\mathbb{R}^+$ are stabilization parameters. The differential equations in~\eqref{eq:eom-second-dae}, along with the replaced configuration constraints~\eqref{eq:g-ddot}, can be written in the more compact form
\begin{equation}
\label{eq:second-index1}
    \begin{pmatrix}\boldsymbol{M}&\boldsymbol{G}^\mathrm{T}\\\boldsymbol{G} &\boldsymbol{0}\end{pmatrix}\begin{pmatrix}\ddot{\boldsymbol{x}}\\\boldsymbol{\mu}\end{pmatrix}=\begin{pmatrix}\hat{\boldsymbol{f}}\\\boldsymbol{c}\end{pmatrix},
\end{equation}
where
\begin{gather}
    \hat{\boldsymbol{f}}=\epsilon \boldsymbol{f}^{\mathrm{ext}}(\Omega t)-\boldsymbol{C}\dot{\boldsymbol{x}}-\boldsymbol{K}\boldsymbol{x}-\boldsymbol{f}(\boldsymbol{x},\dot{\boldsymbol{x}}),\\
    \boldsymbol{c}=-\alpha\boldsymbol{G}\dot{\boldsymbol{x}}-\beta\boldsymbol{g}-\dot{\boldsymbol{G}}\dot{\boldsymbol{x}}.
\end{gather}

One can express the vector of Lagrange multipliers $\boldsymbol{\mu}$ from~\eqref{eq:second-index1} as
\begin{equation}
    \boldsymbol{\mu}=-\left(\boldsymbol{G}\boldsymbol{M}^{-1}\boldsymbol{G}^\mathrm{T}\right)^{-1}\left(\boldsymbol{c}-\boldsymbol{G}\boldsymbol{M}^{-1}\hat{\boldsymbol{f}}\right).
\end{equation}
Then the equations of motion without algebraic equations are obtained as
\begin{equation}
    \boldsymbol{M}\ddot{\boldsymbol{x}}=\hat{\boldsymbol{f}}+\boldsymbol{G}^\mathrm{T} \left(\boldsymbol{G}\boldsymbol{M}^{-1}\boldsymbol{G}^\mathrm{T}\right)^{-1}\left(\boldsymbol{c}-\boldsymbol{G}\boldsymbol{M}^{-1}\hat{\boldsymbol{f}}\right),
\end{equation}
which can be rewritten in the more familiar form
\begin{align}
\label{eq:eom-2nd-nolambda}
    &\boldsymbol{M}\ddot{\boldsymbol{x}}+(\boldsymbol{I}-\boldsymbol{P})\left(\boldsymbol{C}\dot{\boldsymbol{x}}+\boldsymbol{K}\boldsymbol{x}+\boldsymbol{f}(\boldsymbol{x},\dot{\boldsymbol{x}})\right)
    \nonumber\\ &\qquad-\boldsymbol{G}^\mathrm{T}\left(\boldsymbol{G}\boldsymbol{M}^{-1}\boldsymbol{G}^\mathrm{T}\right)^{-1}\boldsymbol{c}=\epsilon(\boldsymbol{I}-\boldsymbol{P}) \boldsymbol{f}^{\mathrm{ext}}(\Omega t),
\end{align}
where
\begin{equation}
    \boldsymbol{P}=\boldsymbol{G}^\mathrm{T} \left(\boldsymbol{G}\boldsymbol{M}^{-1}\boldsymbol{G}^\mathrm{T}\right)^{-1}\boldsymbol{G}\boldsymbol{M}^{-1}
\end{equation}
is a projector and $\boldsymbol{I}-\boldsymbol{P}$ is the complement projector of $\boldsymbol{P}$. We note that $\boldsymbol{P}$ is $\boldsymbol{x}$-dependent if the configuration constraints are nonlinear.

\section{Equations of motion for a chain of pendulums}
\label{sec:eom-pend}
The generalized coordinates for the slider and the rods are given by
\begin{equation}
    \mathbf{q}_1=(x_1,y_1),\quad \mathbf{q}_i=(x_i,y_i,\varphi_i),\quad 2\leq i\leq n.
\end{equation}
The kinetic energy of the system is given by $T=\sum_{i=1}^n T_i$ with
\begin{gather}
    T_i=\frac{1}{2}\mathbf{q}_i^\mathrm{T}\mathbf{M}_i\mathbf{q}_i,\quad \mathbf{M}_1=\mathrm{diag}(m_1,m_1),\nonumber\\
    \mathbf{M}_i=\mathrm{diag}(m_i,m_i,J_i),\, 2\leq i\leq n.
\end{gather}
The potential energy of the system is given by $V=\sum_{i=1}^n V_i$ with
\begin{gather}
    V_1=\frac{k_1}{2}x_1^2-m_1gy_1,\quad V_2=\frac{k}{2}\varphi_2^2-m_2gy_2,\nonumber\\ V_i=\frac{k}{2}(\varphi_{i}-\varphi_{i-1})^2-m_igy_i,\,\,3\leq i\leq n-1.
\end{gather}
The configuration constraints are given by
\begin{gather}
    g_1=y_1,\quad g_2=x_2-0.5l\sin\varphi_2-x_1,\nonumber\\ g_3=y_2-0.5l\cos\varphi_2-y_1\label{eq:chain-config-group1}
\end{gather}
and
\begin{gather}
g_{2i}=x_{i+1}-0.5l\sin\varphi_{i+1}-(x_i+0.5l\sin\varphi_i),\nonumber\\ 
g_{2i+1}=y_{i+1}-0.5l\cos\varphi_{i+1}-(y_i+0.5l\cos\varphi_i),\label{eq:chain-config-group2}
\end{gather}
for $2\leq i\leq n-1$.

\begin{sloppypar}
Let $\mathbf{q}=(\mathbf{q}_1,\mathbf{q}_2,\cdots,\mathbf{q}_n)$ and introduce Lagrangian $L(\mathbf{q},\dot{\mathbf{q}})=T(\dot{\mathbf{q}})-V(\mathbf{q})$, The equations of motion is given by
\begin{equation}
    \frac{d}{dt}\left(\frac{\partial L}{\partial\dot{\mathbf{q}}}\right)-\frac{\partial L}{\partial{\mathbf{q}}}+\boldsymbol{G}^{\mathrm{T}}\boldsymbol{\mu}=\boldsymbol{Q},\quad \boldsymbol{g}(\mathbf{q})=\boldsymbol{0}\label{eq:chain-eom},
\end{equation}
where $\boldsymbol{Q}$ denote a vector of generalized forces which are related to damping and external forces. In particular, we have differential equations as follows
\begin{align}
& m_1\ddot{x}_1+k_1x_1-\mu_2=-c_1\dot{x}_1+\epsilon f_1\cos\Omega t,\nonumber\\ & m_1\ddot{y}_1-m_1g+\mu_1-\mu_3=0,\label{eq:chain-1st}
\end{align}
\begin{align}
& m_2\ddot{x}_2+\mu_2-\mu_4=0,\quad m_2\ddot{y}_2-m_2g+\mu_3-\mu_5=0,\nonumber\\
& J_2\ddot{\varphi}_2+k\varphi_2-k(\varphi_3-\varphi_2)-0.5l\cos\varphi_2(\mu_2+\mu_4)\nonumber\\
&\quad+0.5l\sin\varphi_2(\mu_3+\mu_5)=-c\dot{\varphi}_2-c(\dot{\varphi}_2-\dot{\varphi}_3),
\end{align}
\begin{align}
& m_i\ddot{x}_i+\mu_{2(i-1)}-\mu_{2i}=0,\nonumber\\ & m_i\ddot{y}_i-m_ig+\mu_{2i-1}-\mu_{2i+1}=0,\nonumber  \\
& J_i\ddot{\varphi}_i+k(\varphi_i-\varphi_{i-1})-k(\varphi_{i+1}-\varphi_i)\nonumber\\
&\quad-0.5l\cos\varphi_i\mu_{2i}+0.5l\sin\varphi_i\mu_{2i+1}\nonumber\\
& \quad-0.5l\cos\varphi_i\mu_{2(i-1)}+0.5l\sin\varphi_i\mu_{2i-1}\nonumber\\
&\quad=-c(\dot{\varphi}_i-\dot{\varphi}_{i-1})-c(\dot{\varphi}_i-\dot{\varphi}_{i+1}),
\end{align}
for $3\leq i\leq n-1$, and
\begin{align}
& m_n\ddot{x}_n+\mu_{2(n-1)}=0,\quad m_n\ddot{y}_n-m_ng+\mu_{2n-1}=0,\nonumber\\
& J_n\ddot{\varphi}_n+k(\varphi_n-\varphi_{n-1})-0.5l\cos\varphi_n\mu_{2n-2}\nonumber\\
&\quad +0.5l\sin\varphi_n\mu_{2n-1}=-c(\dot{\varphi}_n-\dot{\varphi}_{n-1})\label{eq:chain-last}.
\end{align}
Similarly to the previous example, we shift the rest state to the origin of the phase space such that the origin becomes a fixed point. Specifically, we take
\begin{gather}
    y_i=\hat{y}_i+0.5l+(i-2)l, \,\,2\leq i\leq n, \nonumber\\ \mu_{2i-1}=\hat{\mu}_{2i-1}+\sum_{j=i}^n m_i g,\,\, 1\leq i\leq n.\label{eq:chain-shift}
\end{gather}
We further introduce the auxiliary variables 
\begin{equation}
\label{eq:chain-auxi}
    u_{2i-3}=\sin\varphi_{i},\,\, u_{2i-2}=1-\cos\varphi_i,\quad 2\leq i\leq n,
\end{equation}
to recast the trigonometric terms in~\eqref{eq:chain-1st}-\eqref{eq:chain-last} into polynomials. Substituting~\eqref{eq:chain-shift} and~\eqref{eq:chain-auxi} into~\eqref{eq:chain-1st}-\eqref{eq:chain-last}, the differential equations become
\begin{align}
& m_1\ddot{x}_1+k_1x_1-\mu_2=-c_1\dot{x}_1+f\cos\Omega t,\nonumber\\
& m_1\ddot{{y}}_1+\mu_1-\mu_3=0,\label{eq:chain-1st-v1}
\end{align}
\begin{align}
& m_2\ddot{x}_2+\mu_2-\mu_4=0,\quad m_2\ddot{\hat{y}}_2+\hat{\mu}_3-\hat{\mu}_5=0,\nonumber\\
& J_2\ddot{\varphi}_2+k(2\varphi_2-\varphi_3)-0.5l(1-u_2)(\mu_2+\mu_4)\nonumber\\
&\quad+0.5lu_1(\hat{\mu}_3+\hat{\mu}_5+m_2g+2\sum_{j=3}^nm_jg)\nonumber\\
& \quad =-c(2\dot{\varphi}_2-\dot{\varphi}_3),
\end{align}
\begin{align}
& m_i\ddot{x}_i+\mu_{2(i-1)}-\mu_{2i}=0,\nonumber\\ & m_i\ddot{\hat{y}}_i+\hat{\mu}_{2i-1}-\hat{\mu}_{2i+1}=0,\nonumber  \\
& J_i\ddot{\varphi}_i+k(2\varphi_i-\varphi_{i-1}-\varphi_{i+1})\nonumber\\
& \quad-0.5l(1-u_{2i-2})(\mu_{2i}+\mu_{2(i-1)})\nonumber\\
& \quad+0.5lu_{2i-3}(\mu_{2i+1}+\mu_{2i-1}+m_ig+2\sum_{j=i+1}^nm_jg)\nonumber\\
&\quad=-c(\dot{2\varphi}_i-\dot{\varphi}_{i-1}-\dot{\varphi}_{i+1}), 
\end{align}
for $3\leq i\leq n-1$,
\begin{align}
& m_n\ddot{x}_n+\mu_{2(n-1)}=0,\quad m_n\ddot{\hat{y}}_n+\hat{\mu}_{2n-1}=0,\nonumber\\
& J_n\ddot{\varphi}_n+k(\varphi_n-\varphi_{n-1}) -0.5l(1-u_{2n-2})\mu_{2n-2}\nonumber\\
&\quad +0.5lu_{2n-3}(\hat{\mu}_{2n-1}+m_ng)=-c(\dot{\varphi}_n-\dot{\varphi}_{n-1})\label{eq:chain-last-v1},
\end{align}
and the configuration constraints~\eqref{eq:chain-config-group1}-\eqref{eq:chain-config-group2} become
\begin{gather}
    g_1=y_1,\quad g_2=x_2-0.5lu_1-x_1,\nonumber\\ g_3=\hat{y}_2+0.5lu_2-y_1,\label{eq:chain-config-group1-v1}
\end{gather}
and
\begin{gather}
g_{2i}=x_{i+1}-0.5lu_{2i-1}-(x_i+0.5lu_{2i-3}),\nonumber\\
g_{2i+1}=\hat{y}_{i+1}-\hat{y}_i+0.5l(u_{2i-2}+u_{2i}),\label{eq:chain-config-group2-v2}
\end{gather}
for $2\leq i\leq n-1$.
We have additional equations for the auxiliary variables in the form
\begin{equation}
\label{eq:chain-add}
    \dot{u}_{2i-3}=(1-u_{2i-2})\dot{\varphi}_i,\,\,u_{2i-3}^2-2u_{2i-2}+u_{2i-2}^2=0,\quad 2\leq i\leq n.
\end{equation}
Let $\mathbf{u}=(u_1,\cdots,u_{2n-2})$ and define $\boldsymbol{z}=(\mathbf{q},\dot{\mathbf{q}},\boldsymbol{\mu},\mathbf{u})$, the equations of motion~\eqref{eq:chain-1st-v1}-\eqref{eq:chain-add} can be written in the form of~\eqref{eq:full-first}.
\end{sloppypar}

\section{Supplementary analysis}
\subsection{Example~\ref{sec:pend-slider}}
\label{sec:appendix-slider}
\begin{sloppypar}
Along with the plots of time histories of the translational displacement of the slider ($x_1$) and the angular displacement of the pendulum ($\varphi_2$) shown in Fig.~\ref{fig:slide-pend-ic1}, we present the corresponding plots of time histories of Lagrange multipliers $\mu_2$ and $\mu_3$ defined in~\eqref{eq:eom-slider}. These two multipliers represent reaction forces induced by the last two configuration constraints in~\eqref{eq:constraints-slider}. The time histories for reaction forces $\mu_2$ and $\mu_3$ with initial conditions IC1 and IC2 are shown in Fig.~\ref{fig:slide-pend-lamd}, from which we observe a close match between the results from SSM predictions and the reference results of the full system. Here the reference results are obtained from the following steps
\begin{enumerate}
    \item Perform forward simulation of Euler-Lagrange equations~\eqref{eq:pend-slide-ode};
    \item Calculate $\ddot{x}_2$ and $\ddot{y}_2$ with $x_1$ and $\varphi_2$ obtained from the above simulation;
    \item Calculate $\mu_2$ and $\mu_3$ from the third and fourth sub-equations in~\eqref{eq:eom-slider}.
\end{enumerate}
\end{sloppypar}

\begin{figure}[!ht]
	\centering
	\includegraphics[width=.45\textwidth]{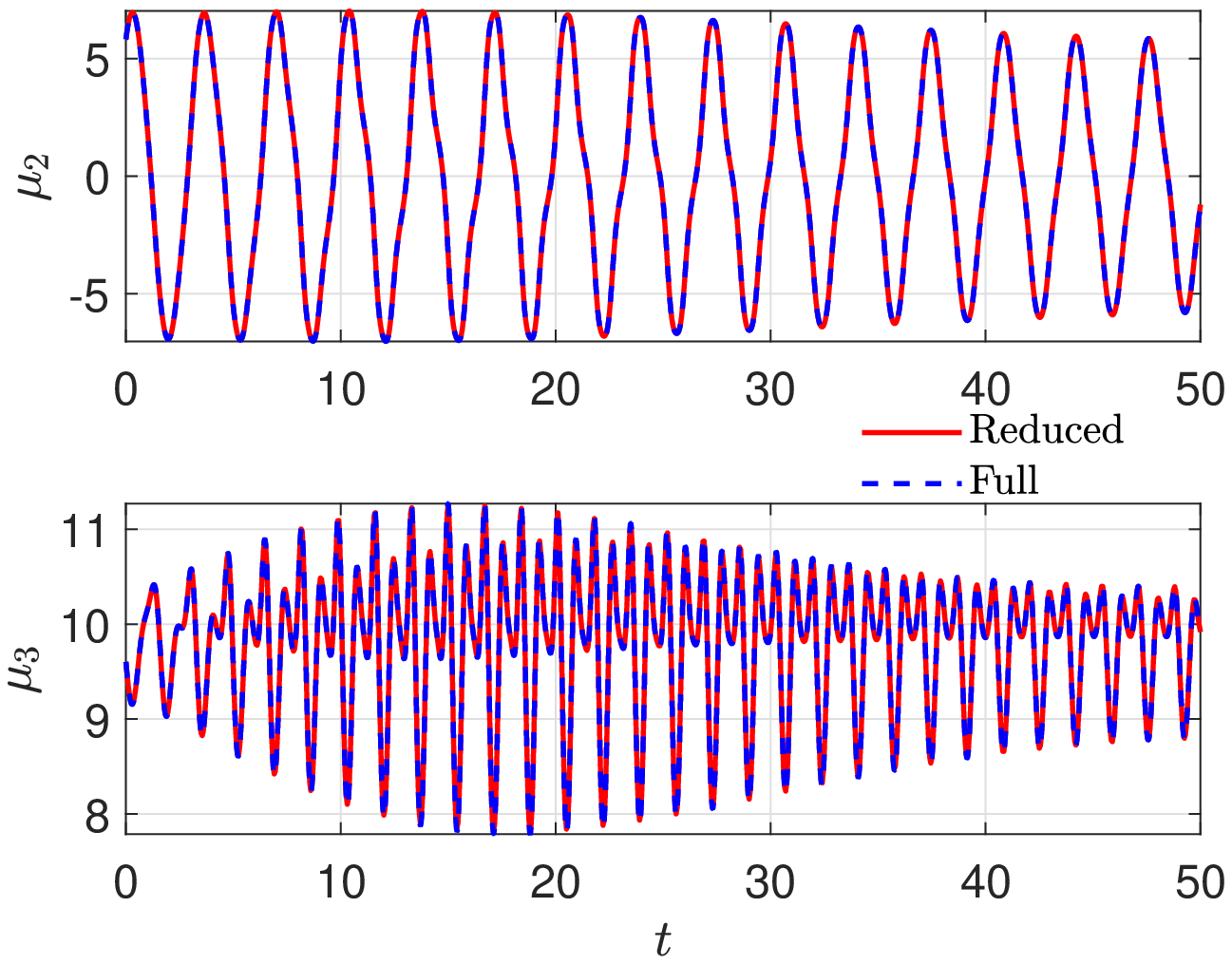}
	\includegraphics[width=.45\textwidth]{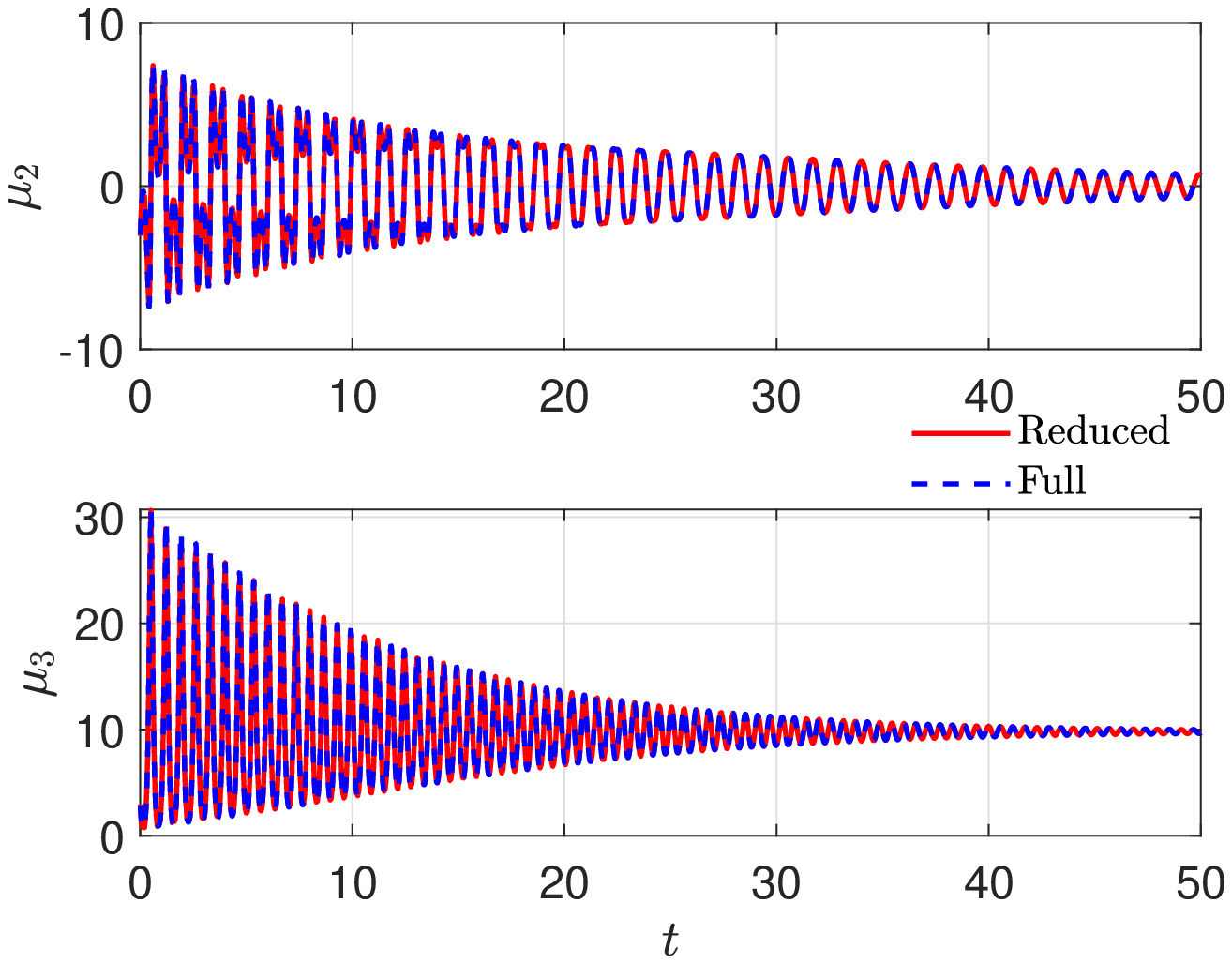}	
	\caption{Time histories for the reaction forces $(\mu_2,\mu_3)$ of the pendulum slider system~\eqref{eq:dae-pend-slider} with initial condition IC1 $(3.5e^{\mathrm{i}},3.5e^{-\mathrm{i}},0,0)$ (upper panel) and IC2 $(0,0,3.5e^{\mathrm{i}},3.5e^{-\mathrm{i}})$ (lower panel) on the SSM approximated up to $\mathcal{O}(13)$.}
	\label{fig:slide-pend-lamd}
\end{figure}

\subsection{Example~\ref{sec:example-divider}}
\label{sec:appendix-divider}
\begin{sloppypar}
We again use the invariance error measure introduced in Eq.~\eqref{eq:auto-error-measure} for validating our results and choosing an appropriate order of expansion. Similarly to the pendulum-slider example, we choose the refinement parameters $n_\alpha=10$, $n_{\vartheta}=30$ (see Eq.~\eqref{eq:samples-on-ssm}) and estimate the average invariance error over a 4-sphere of radius $\varrho$ in the parametrization space at various orders of approximation of the SSM. As shown in Fig.~\ref{fig:error-auto-divider}, the normalized error measure decreases with decreasing $\varrho$ and increasing orders, as seen previously.
We choose an error tolerance of 0.01, which is still inside the convergence domain boundary deduced from Fig.~\ref{fig:error-auto-divider}. We infer from Fig.~\ref{fig:error-auto-divider} that expansions at $\mathcal{O}(3)$, $\mathcal{O}(5)$ and $\mathcal{O}(7)$ will be required to meet the chosen error tolerance in the domains $\varrho\leq14$, $\varrho\leq30$, and $\varrho\leq50$, respectively.
\end{sloppypar}

\begin{figure}[!ht]
	\centering
	\includegraphics[width=.5\textwidth]{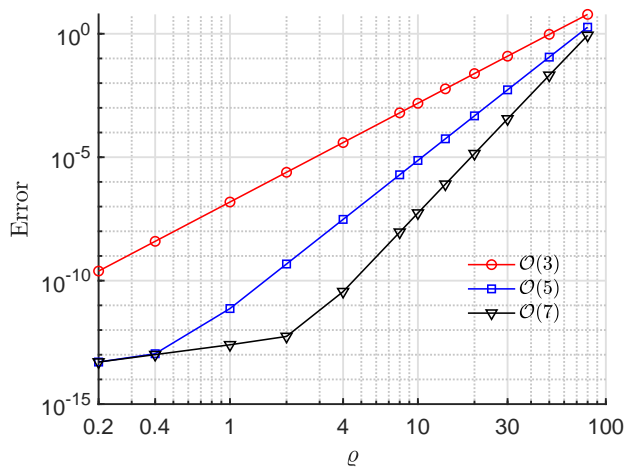}
	\caption{\small Invariance error measure~\eqref{eq:auto-error-measure} of the 4-dimensional SSM approximations of the frequency divider system~\eqref{eq:divider-diff}.}
	\label{fig:error-auto-divider}
\end{figure}

We compare the trajectories from the simulations of the  SSM-based ROM up to $\mathcal{O}(7)$ with those of the full system. Similarly to~\eqref{eq:ic-slide-pend-ssm}, we consider two sets of initial conditions as
\begin{gather}
	\mathrm{IC1}: \boldsymbol{p}_0 = (50e^{2\mathrm{i}},50e^{-2\mathrm{i}},0,0),\nonumber\\ \mathrm{IC2}: \boldsymbol{p}_0 = (0,0,50e^{2\mathrm{i}},50e^{-2\mathrm{i}}),\label{eq:ic-divider-ssm}
\end{gather}
where IC1 and IC2 are chosen along the first and the second mode on a hypersphere of radius $\varrho=50$ in the parametrization space of the SSM. 
In Fig.~\ref{fig:divider-ic12}, we plot the transverse displacement trajectories at the midpoints of the horizontal and the vertical beams initialized at IC1 and IC2.  The results obtained by SSM prediction match well with the reference results from the full system.

\begin{figure}[!ht]
\centering
\includegraphics[width=.45\textwidth]{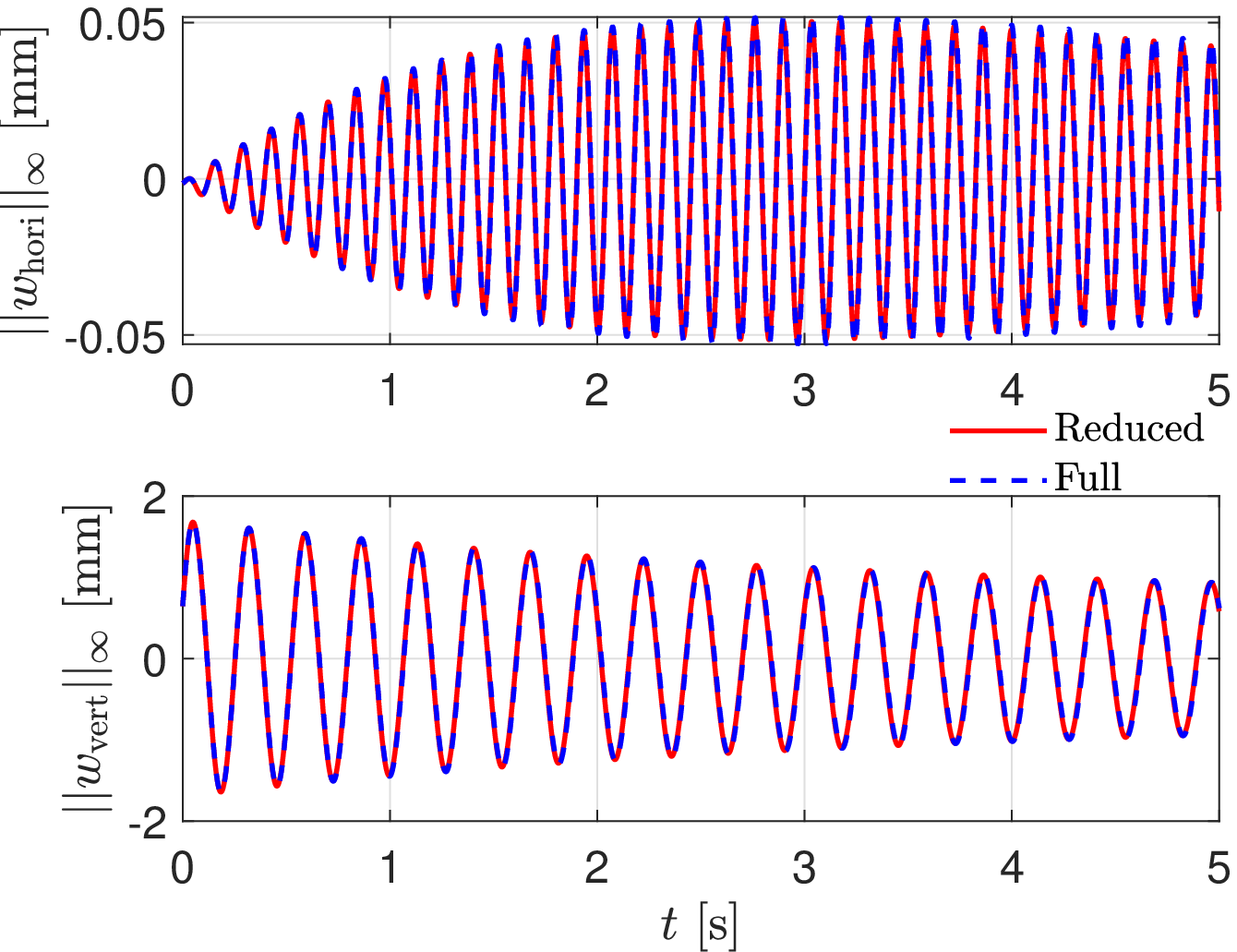}
\includegraphics[width=.45\textwidth]{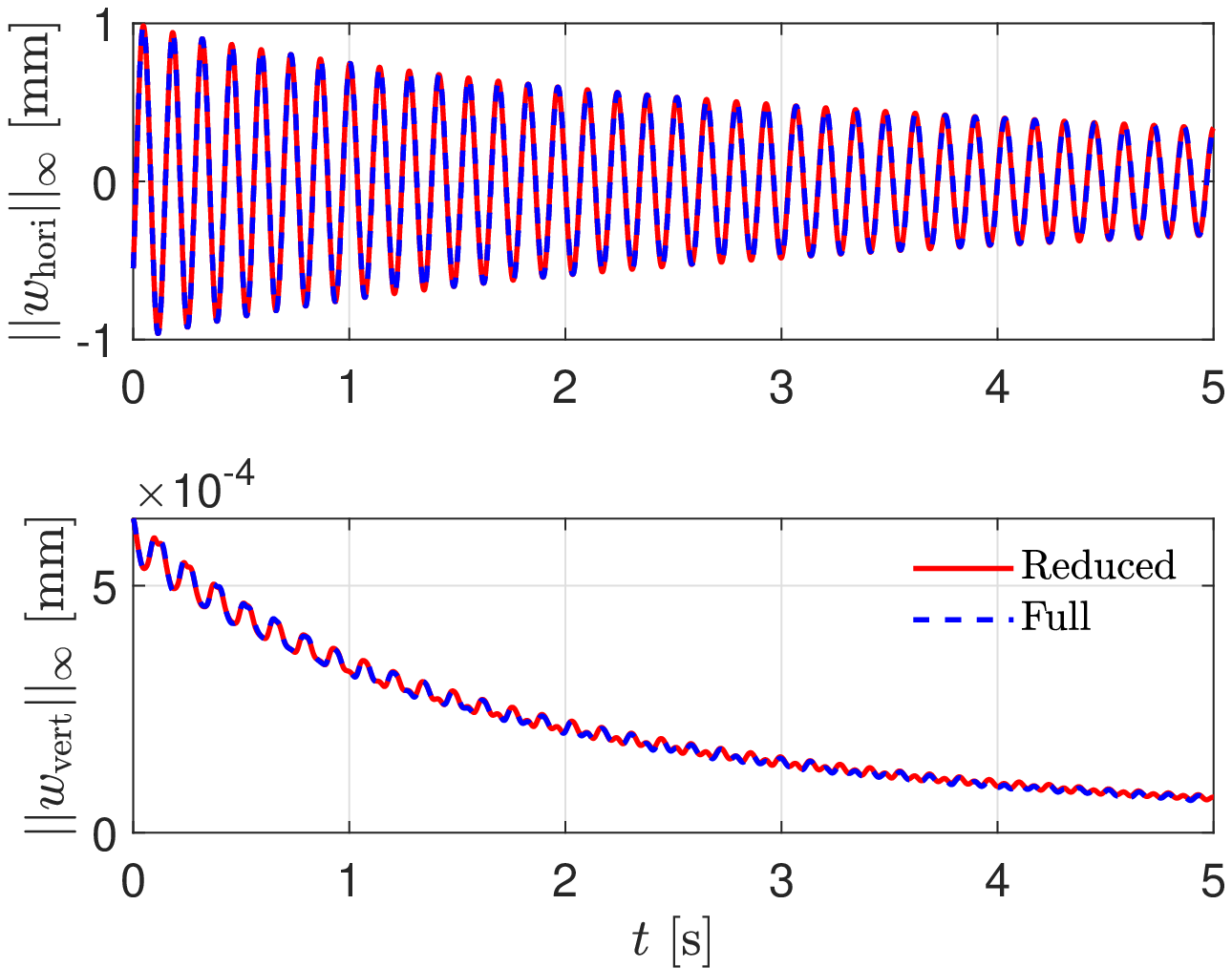}
\caption{Time histories for transverse deflections at the midpoints of the two beams with initial condition IC1 $(50e^{2\mathrm{i}},50e^{-2\mathrm{i}},0,0)$ (upper panel) and IC2 $(0,0,50e^{2\mathrm{i}},50e^{-2\mathrm{i}})$ (lower panel). Here $w_\mathrm{hori}$ and $w_\mathrm{vert}$ are transverse deflection at the midpoint the the horizontal and vertical beams of the divider.}
\label{fig:divider-ic12}
\end{figure}

In the upper panel of Fig.~\ref{fig:divider-ic12}, we observe that the vibration amplitude of the transverse displacement at the midpoint of the horizontal beam increases gradually from zero and then decays in time. In contrast, we observe a monotonic decay for the vibration amplitude of the transverse displacement at the midpoint of the vertical beam. Since the transverse displacements at the midpoints of the horizontal and vertical beams feature the vibrations of the second and the first modes (see Fig.~\ref{fig:frequency-divider-modeshape}), the time histories in the upper panel indicate a energy transfer from the first mode to the second mode. In contrast, the transverse displacement at the midpoint of the vertical beam in the right panel stays close to zero because $\rho_1\equiv0$ along the trajectory of the reduced dynamics~\eqref{eq:red-auto-divider} and then the first mode is inactive.



\bibliography{manuscript.bbl} 

\begin{thebibliography}{10}

\bibitem{shabana2020dynamics}
A.~Shabana, {\em Dynamics of multibody systems}.
\newblock Cambridge university press, 2020.

\bibitem{lazarus2012finite}
A.~Lazarus, O.~Thomas, and J.-F. De{\"u}, ``Finite element reduced order models
  for nonlinear vibrations of piezoelectric layered beams with applications to
  {NEMS},'' {\em Finite Elements in Analysis and Design}, vol.~49, no.~1,
  pp.~35--51, 2012.

\bibitem{ascher1998computer}
U.~M. Ascher and L.~R. Petzold, {\em Computer methods for ordinary differential
  equations and differential-algebraic equations}, vol.~61.
\newblock SIAM, 1998.

\bibitem{agrawal1985dynamic}
O.~P. Agrawal and A.~A. Shabana, ``Dynamic analysis of multibody systems using
  component modes,'' {\em Computers \& Structures}, vol.~21, no.~6,
  pp.~1303--1312, 1985.

\bibitem{cammarata2020use}
A.~Cammarata and C.~M. Pappalardo, ``On the use of component mode synthesis
  methods for the model reduction of flexible multibody systems within the
  floating frame of reference formulation,'' {\em Mechanical Systems and Signal
  Processing}, vol.~142, p.~106745, 2020.

\bibitem{cammarata2020global}
A.~Cammarata, ``Global flexible modes for the model reduction of planar
  mechanisms using the finite-element floating frame of reference
  formulation,'' {\em Journal of Sound and Vibration}, vol.~489, p.~115668,
  2020.

\bibitem{cammarata2021global}
A.~Cammarata, ``Global modes for the reduction of flexible multibody systems,''
  {\em Multibody System Dynamics}, vol.~53, no.~1, pp.~59--83, 2021.

\bibitem{lehner2006use}
M.~Lehner and P.~Eberhard, ``On the use of moment-matching to build reduced
  order models in flexible multibody dynamics,'' {\em Multibody System
  Dynamics}, vol.~16, no.~2, pp.~191--211, 2006.

\bibitem{fehr2011simulation}
J.~Fehr and P.~Eberhard, ``Simulation process of flexible multibody systems
  with non-modal model order reduction techniques,'' {\em Multibody System
  Dynamics}, vol.~25, no.~3, pp.~313--334, 2011.

\bibitem{saak2018model}
J.~Saak and M.~Voigt, ``Model reduction of constrained mechanical systems in
  {M-M.E.S.S},'' {\em IFAC-PapersOnLine}, vol.~51, no.~2, pp.~661--666, 2018.

\bibitem{benner2017model}
P.~Benner and T.~Stykel, ``Model order reduction for differential-algebraic
  equations: a survey,'' in {\em Surveys in Differential-Algebraic Equations
  IV}, pp.~107--160, Springer, 2017.

\bibitem{grazioso2019geometrically}
S.~Grazioso, G.~Di~Gironimo, and B.~Siciliano, ``A geometrically exact model
  for soft continuum robots: The finite element deformation space
  formulation,'' {\em Soft Robotics}, vol.~6, no.~6, pp.~790--811, 2019.

\bibitem{tang2019model}
Y.~Tang, H.~Hu, and Q.~Tian, ``Model order reduction based on successively
  local linearizations for flexible multibody dynamics,'' {\em International
  Journal for Numerical Methods in Engineering}, vol.~118, no.~3, pp.~159--180,
  2019.

\bibitem{tian2020model}
Q.~Tian, P.~Lan, and Z.~Yu, ``Model-order reduction of flexible multibody
  dynamics via free-interface component mode synthesis method,'' {\em Journal
  of Computational and Nonlinear Dynamics}, vol.~15, no.~10, 2020.

\bibitem{luo2017model}
K.~Luo, H.~Hu, C.~Liu, and Q.~Tian, ``Model order reduction for dynamic
  simulation of a flexible multibody system via absolute nodal coordinate
  formulation,'' {\em Computer Methods in Applied Mechanics and Engineering},
  vol.~324, pp.~573--594, 2017.

\bibitem{peng2022data}
H.~Peng, N.~Song, and Z.~Kan, ``Data-driven model order reduction with proper
  symplectic decomposition for flexible multibody system,'' {\em Nonlinear
  Dynamics}, vol.~107, no.~1, pp.~173--203, 2022.

\bibitem{haller2017exact}
G.~Haller and S.~Ponsioen, ``Exact model reduction by a slow--fast
  decomposition of nonlinear mechanical systems,'' {\em Nonlinear Dynamics},
  vol.~90, no.~1, pp.~617--647, 2017.

\bibitem{shaw1993normal}
S.~W. Shaw and C.~Pierre, ``Normal modes for non-linear vibratory systems,''
  {\em Journal of Sound and Vibration}, vol.~164, no.~1, pp.~85--124, 1993.

\bibitem{jiang2005nonlinear}
D.~Jiang, C.~Pierre, and S.~Shaw, ``Nonlinear normal modes for vibratory
  systems under harmonic excitation,'' {\em Journal of Sound and Vibration},
  vol.~288, no.~4-5, pp.~791--812, 2005.

\bibitem{touze2006nonlinear}
C.~Touz{\'e} and M.~Amabili, ``Nonlinear normal modes for damped geometrically
  nonlinear systems: Application to reduced-order modelling of harmonically
  forced structures,'' {\em Journal of Sound and Vibration}, vol.~298, no.~4-5,
  pp.~958--981, 2006.

\bibitem{haller2016nonlinear}
G.~Haller and S.~Ponsioen, ``Nonlinear normal modes and spectral submanifolds:
  existence, uniqueness and use in model reduction,'' {\em Nonlinear Dynamics},
  vol.~86, no.~3, pp.~1493--1534, 2016.

\bibitem{SHOBHIT}
S.~Jain and G.~Haller, ``How to compute invariant manifolds and their reduced
  dynamics in high-dimensional finite element models,'' {\em Nonlinear
  Dynamics}, vol.~107, no.~2, pp.~1417--1450, 2022.

\bibitem{touze2021model}
C.~Touz{\'e}, A.~Vizzaccaro, and O.~Thomas, ``Model order reduction methods for
  geometrically nonlinear structures: a review of nonlinear techniques,'' {\em
  Nonlinear Dynamics}, vol.~105, no.~2, pp.~1141--1190, 2021.

\bibitem{breunung2018explicit}
T.~Breunung and G.~Haller, ``Explicit backbone curves from spectral
  submanifolds of forced-damped nonlinear mechanical systems,'' {\em
  Proceedings of the Royal Society A: Mathematical, Physical and Engineering
  Sciences}, vol.~474, no.~2213, p.~20180083, 2018.

\bibitem{ponsioen2020model}
S.~Ponsioen, S.~Jain, and G.~Haller, ``Model reduction to spectral submanifolds
  and forced-response calculation in high-dimensional mechanical systems,''
  {\em Journal of Sound and Vibration}, vol.~488, p.~115640, 2020.

\bibitem{part-i}
M.~Li, S.~Jain, and G.~Haller, ``Nonlinear analysis of forced mechanical
  systems with internal resonance using spectral submanifolds, {P}art i:
  Periodic response and forced response curve,'' {\em To appear on Nonlinear Dynamics}, 2022.

\bibitem{part-ii}
M.~Li and G.~Haller, ``Nonlinear analysis of forced mechanical systems with
  internal resonance using spectral submanifolds, {P}art ii: Bifurcation and
  quasi-periodic response,'' {\em Nonlinear Dynamics}, pp.~1--36, 2022. \url{https://doi.org/10.1007/s11071-022-07476-6}

\bibitem{ssmtool21}
S.~Jain, T.~Thurnher, M.~Li, and G.~Haller, ``{SSMTool} 2.3: Computation of
  invariant manifolds \& their reduced dynamics in high-dimensional mechanics
  problems (to be released along with other updates by {D}ec 2022).'' SSMTool
  2.2: \url{https://doi.org/10.5281/zenodo.6338831}.
\newblock Accessed: 2022-7-30.

\bibitem{laulusa2008review}
A.~Laulusa and O.~A. Bauchau, ``Review of classical approaches for constraint
  enforcement in multibody systems,'' {\em Journal of Computational and
  Nonlinear Dynamics}, vol.~3, no.~1, 2008.

\bibitem{bauchau2008review}
O.~A. Bauchau and A.~Laulusa, ``Review of contemporary approaches for
  constraint enforcement in multibody systems,'' {\em Journal of Computational
  and Nonlinear Dynamics}, vol.~3, no.~1, 2008.

\bibitem{benner2015numerical}
P.~Benner, P.~Losse, V.~Mehrmann, and M.~Voigt, ``Numerical linear algebra
  methods for linear differential-algebraic equations,'' in {\em Surveys in
  differential-algebraic equations III}, pp.~117--175, Springer, 2015.

\bibitem{cardona1989time}
A.~Cardona and M.~G{\'e}radin, ``Time integration of the equations of motion in
  mechanism analysis,'' {\em Computers \& structures}, vol.~33, no.~3,
  pp.~801--820, 1989.

\bibitem{du2013robust}
N.~H. Du, V.~H. Linh, and V.~Mehrmann, ``Robust stability of
  differential-algebraic equations,'' in {\em Surveys in Differential-Algebraic
  Equations I}, pp.~63--95, Springer, 2013.

\bibitem{cabre2003parameterization-i}
X.~Cabr{\'e}, E.~Fontich, and R.~de~la Llave, ``The parameterization method for
  invariant manifolds i: manifolds associated to non-resonant subspaces,'' {\em
  Indiana University Mathematics Journal}, pp.~283--328, 2003.

\bibitem{cabre2003parameterization-ii}
X.~Cabr{\'e}, E.~Fontich, and R.~de~la Llave, ``The parameterization method for
  invariant manifolds ii: regularity with respect to parameters,'' {\em Indiana
  University Mathematics Journal}, pp.~329--360, 2003.

\bibitem{cabre2005parameterization-iii}
X.~Cabr{\'e}, E.~Fontich, and R.~De~La~Llave, ``The parameterization method for
  invariant manifolds iii: overview and applications,'' {\em Journal of
  Differential Equations}, vol.~218, no.~2, pp.~444--515, 2005.

\bibitem{haro2006parameterization}
A.~Haro and R.~de~la Llave, ``A parameterization method for the computation of
  invariant tori and their whiskers in quasi-periodic maps: rigorous results,''
  {\em Journal of Differential Equations}, vol.~228, no.~2, pp.~530--579, 2006.

\bibitem{haro2006parameterization-num}
A.~Haro and R.~de~la Llave, ``A parameterization method for the computation of
  invariant tori and their whiskers in quasi-periodic maps: numerical
  algorithms,'' {\em Discrete \& Continuous Dynamical Systems-B}, vol.~6,
  no.~6, p.~1261, 2006.

\bibitem{Bernstein+2009}
D.~S. Bernstein, {\em Matrix Mathematics: Theory, Facts, and Formulas (Second
  Edition)}.
\newblock Princeton University Press, 2009.

\bibitem{dankowicz2013recipes}
H.~Dankowicz and F.~Schilder, {\em Recipes for continuation}.
\newblock SIAM, 2013.

\bibitem{COCO}
F.~Schilder, H.~Dankowicz, and M.~Li, ``Continuation {Core} and {Toolboxes}
  ({COCO}).'' \url{https://sourceforge.net/projects/cocotools}.
\newblock Accessed: 2022-02-12.

\bibitem{ahsan2022methods}
Z.~Ahsan, H.~Dankowicz, M.~Li, and J.~Sieber, ``Methods of continuation and
  their implementation in the coco software platform with application to delay
  differential equations,'' {\em Nonlinear Dynamics}, vol.~107, no.~4,
  pp.~3181--3243, 2022.

\bibitem{guillot2019generic}
L.~Guillot, B.~Cochelin, and C.~Vergez, ``A generic and efficient {T}aylor
  series--based continuation method using a quadratic recast of smooth
  nonlinear systems,'' {\em International Journal for Numerical Methods in
  Engineering}, vol.~119, no.~4, pp.~261--280, 2019.

\bibitem{buza2021using}
G.~Buza, S.~Jain, and G.~Haller, ``Using spectral submanifolds for optimal mode
  selection in nonlinear model reduction,'' {\em Proceedings of the Royal
  Society A}, vol.~477, no.~2246, p.~20200725, 2021.

\bibitem{han2020simulation}
S.~Han and O.~A. Bauchau, ``Simulation and stability analysis of periodic
  flexible multibody systems,'' {\em Multibody System Dynamics}, vol.~50,
  no.~4, pp.~381--413, 2020.

\bibitem{ju2021efficient}
R.~Ju, W.~Fan, and W.~Zhu, ``An efficient galerkin averaging-incremental
  harmonic balance method for nonlinear dynamic analysis of rigid multibody
  systems governed by differential--algebraic equations,'' {\em Nonlinear
  Dynamics}, vol.~105, no.~1, pp.~475--498, 2021.

\bibitem{Strachan2013}
B.~S. Strachan, S.~W. Shaw, and O.~Kogan, ``{Subharmonic Resonance Cascades in
  a Class of Coupled Resonators},'' {\em Journal of Computational and Nonlinear
  Dynamics}, vol.~8, oct 2013.

\bibitem{Qalandar2014}
K.~R. Qalandar, B.~S. Strachan, B.~Gibson, M.~Sharma, A.~Ma, S.~W. Shaw, and
  K.~L. Turner, ``{Frequency division using a micromechanical resonance
  cascade},'' {\em Applied Physics Letters}, vol.~105, p.~244103, dec 2014.

\bibitem{Jain2018}
S.~Jain, P.~Tiso, and G.~Haller, ``Exact nonlinear model reduction for a von
  k{\'a}rm{\'a}n beam: Slow-fast decomposition and spectral submanifolds,''
  {\em Journal of Sound and Vibration}, vol.~423, pp.~195--211, 2018.

\bibitem{montlaur2012high}
A.~Montlaur, S.~Fernandez-Mendez, and A.~Huerta, ``High-order implicit time
  integration for unsteady incompressible flows,'' {\em International journal
  for numerical methods in fluids}, vol.~70, no.~5, pp.~603--626, 2012.

\bibitem{cortes2018systems}
I.~Cortes~Garcia, S.~Sch{\"o}ps, H.~D. Gersem, and S.~Baumanns, ``Systems of
  differential algebraic equations in computational electromagnetics,'' in {\em
  Applications of Differential-Algebraic Equations: Examples and Benchmarks},
  pp.~123--169, Springer, 2018.

\bibitem{liu2019solving}
Y.~Liu and K.~Sun, ``Solving power system differential algebraic equations
  using differential transformation,'' {\em IEEE Transactions on Power
  Systems}, vol.~35, no.~3, pp.~2289--2299, 2019.

\end{thebibliography}
\bibliographystyle{ieeetr}

\end{document}